\author{Jay Schweig \footnote{University of Kansas; jschweig@math.ku.edu}}
\title{Convex-Ear Decompositions and the Flag h-Vector}
\newtheorem{thm}{Theorem}[section]
\newtheorem{prop}[thm]{Proposition}
\newtheorem{cor}[thm]{Corollary}
\newtheorem{lem}[thm]{Lemma}
\newtheorem{defn}[thm]{Definition}
\newtheorem{fact}[thm]{Fact}
\newtheorem{conj}[thm]{Conjecture}
\newtheorem{example}[thm]{Example}
\newcommand{\old}{\text{old}}
\newcommand{\new}{\text{new}}
\newcommand{\bc}{\textbf{c}}
\newcommand{\bd}{\textbf{d}}
\newcommand{\be}{\textbf{e}}
\newcommand{\ssection}[1]{%
  \section[#1]{\centering\normalfont\scshape #1}}
\newcommand{\ssubsection}[1]{%
  \subsection[#1]{\raggedright\normalfont\itshape #1}}
\begin{document} 

\maketitle

\begin{abstract}
We prove a theorem allowing us to find convex-ear decompositions for rank-selected subposets of posets that are unions of Boolean sublattices in a coherent fashion.  We then apply this theorem to geometric lattices and face posets of shellable complexes, obtaining new inequalities for their h-vectors.  Finally, we use the latter decomposition to prove new inequalities for the flag h-vectors of face posets of Cohen-Macaulay complexes. 
\end{abstract}

\ssection{Introduction}
The \emph{f-vector} of a finite simplicial complex $\Delta$, which counts the number of faces of the complex in each dimension, is arguably its most fundamental invariant.  The \emph{h-vector} of $\Delta$ is the image of its f-vector under an invertible transformation.  Somewhat surprisingly, properties of a complex's f-vector are sometimes better expressed through its h-vector.  A good example of this phenomenon are the Dehn-Sommerville relations (see, for instance, \cite{ziegler}), which state that the h-vector of a simplicial polytope boundary is symmetric.  

The main complexes we study in this paper are all \emph{order complexes}, namely complexes whose simplices correspond to chains in posets.  Since a poset and its order complex hold the same information, we often refer to them interchangeably.  E.g., we may speak of the \emph{facets} or \emph{h-vector} of a poset, or to a \emph{chain} in an order complex. 

\emph{Convex-ear decompositions} were first introduced by Chari in \cite{ch}.  Heuristically, a complex admits a convex-ear decomposition if it is a union of simplicial polytope boundaries which fit together coherently (see Definition \ref{ced}).  Suppose a $(d-1)$-dimensional complex $\Delta$ admits such a decomposition.  In \cite{ch}, Chari shows that the h-vector $(h_0, h_1, \ldots, h_d)$ of $\Delta$ satisfies, for $i < d/2$, $h_i \leq h_{i+1}$ and $h_i \leq h_{d-i}$.  In \cite{ed}, Swartz shows that $\Delta$ is $2$-CM, and that $(h_0, h_1-h_0, h_2-h_1, \ldots, h_{\lfloor d/2\rfloor} - h_{\lfloor d/2\rfloor - 1})$ is an M-vector (called an \emph{O-sequence} by some authors).  Convex-ear decompositions have proven quite useful, as they have been applied to coloop-free matroid complexes \cite{ch}, geometric lattices \cite{ns}, coloring complexes \cite{edandtrisha}, $d$-divisible partition lattices \cite{russ}, coset lattices of relatively complemented finite groups \cite{russ}, and finite buildings \cite{ed}.  

In \cite{me}, we find a convex-ear decomposition rank-selected subposets of supersolvable lattices with nowhere-zero M\"obius functions.  In the process, we obtain a decomposition for order complexes of rank-selected subposets of the Boolean lattice $B_d$.  In this paper we build upon this result with Theorem \ref{boolpieces}, which gives convex-ear decompositions for rank-selected subposets of posets that are unions of Boolean lattices, pieced together nicely.  In Section \ref{central} we recall several useful results from \cite{me}, and then prove this theorem.

Sections \ref{geolattice} and \ref{faceposetsection} are devoted to the applications of Theorem \ref{boolpieces} to geometric lattices and face posets of shellable complexes, respectively.  Taken together, these results (along with those of Chari and Swartz) give us the following.

\begin{cor}
Let $\Delta$ be the order complex of a rank-selected subposet of either
\begin{itemize}
\item[1:] a geometric lattice, or
\item[2:] the face poset of the codimension-$1$ skeleton of a shellable complex.
\end{itemize}
Then $\Delta$ is $2$-CM and its h-vector $(h_0, h_1, \ldots, h_d)$ satisfies, for all $i < d/2$, $h_i \leq h_{i+1}$ and $h_i \leq h_{d-i}$.  Moreover, $(h_0, h_1-h_0, h_2-h_1, \ldots, h_{\lfloor d/2\rfloor} - h_{\lfloor d/2\rfloor - 1})$ is an M-vector.  
\end{cor}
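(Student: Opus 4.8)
The plan is to reduce the corollary to a single structural fact: in both cases the order complex $\Delta$ admits a convex-ear decomposition. Granting this, every assertion follows formally from results recalled in the introduction. Chari's theorem \cite{ch} gives $h_i \leq h_{i+1}$ and $h_i \leq h_{d-i}$ for all $i < d/2$, and Swartz's theorem \cite{ed} gives both that $\Delta$ is $2$-CM and that $(h_0, h_1 - h_0, h_2 - h_1, \ldots, h_{\lfloor d/2 \rfloor} - h_{\lfloor d/2 \rfloor - 1})$ is an M-vector. Thus all the work lies in producing the convex-ear decompositions, and this is exactly what Theorem \ref{boolpieces} is built to do once the poset in question is presented as a suitably coherent union of Boolean lattices; Sections \ref{geolattice} and \ref{faceposetsection} carry this out.

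For case (1), I would first write a geometric lattice $L$ of rank $n$ as a union of Boolean sublattices: each basis $\{a_1, \ldots, a_n\}$ of atoms spans a sublattice $\{\bigvee_{i \in S} a_i : S \subseteq \{1, \ldots, n\}\} \cong B_n$, and since $L$ is atomic every flat lies in some such sublattice (extend a basis of the flat to a basis of $L$). Ordering these pieces appropriately (as in the Nyman--Swartz decomposition \cite{ns}) so that the overlaps demanded by Theorem \ref{boolpieces} are met, one applies that theorem to an arbitrary rank-selection. For case (2), let $\Gamma$ be a shellable $d$-complex with shelling $F_1, \ldots, F_m$ and let $\Gamma'$ be its codimension-$1$ skeleton. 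In the face poset of $\Gamma'$, each facet $F_j$ of $\Gamma$ contributes the subposet of nonempty faces of $F_j$ of dimension at most $d - 1$, which after adjoining a bottom element is a rank-selected Boolean lattice; the shelling order on the $F_j$ supplies the coherence hypothesis of Theorem \ref{boolpieces}, since a shelling is precisely what governs how each new facet meets the union of the earlier ones. Applying Theorem \ref{boolpieces} then produces a convex-ear decomposition of the rank-selected order complex.

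The main obstacle is checking, in each application, that the chosen ordering of Boolean pieces genuinely satisfies the hypotheses of Theorem \ref{boolpieces}: for geometric lattices one must extract the correct order on bases from the combinatorics of broken circuits, and for shellable complexes one must verify that passing to the codimension-$1$ skeleton does not destroy the compatibility supplied by the shelling, and that both constructions are unaffected by rank-selection. Once these verifications are in place in Sections \ref{geolattice} and \ref{faceposetsection}, the corollary follows by combining Theorem \ref{boolpieces} with the theorems of Chari \cite{ch} and Swartz \cite{ed}.
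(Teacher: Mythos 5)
Your proposal matches the paper's route exactly: the corollary is proved by combining the convex-ear decompositions of Theorem \ref{boolpieces} (applied in Section \ref{geolattice} via the Nyman--Swartz Boolean pieces indexed by nbc-bases with their basis labelings, and in Section \ref{faceposetsection} via the face posets of the facets in a shelling order, labeled so the minimal new face comes last) with the theorems of Chari and Swartz. The only slight imprecision is that in case (1) the paper restricts to nbc-bases rather than all bases of atoms (this is what makes hypotheses (2) and (3) of Theorem \ref{boolpieces} checkable via Lemmas \ref{nbc} and \ref{labels}), but since you defer to the Nyman--Swartz ordering this is consistent with the paper's argument.
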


Finally, in Section \ref{flaghsection}, we use the decomposition from Section \ref{faceposetsection} and techniques similar to those in \cite{ns} to prove that the flag h-vector $\{h_S\}$ of a Cohen-Macaulay complex's face poset satisfies $h_T \leq h_S$ whenever $S$ dominates $T$ (in the sense of Definition \ref{dominance}).  

\ssection{Preliminaries}

We assume a familiarity with simplicial complexes and partially ordered sets (see \cite{ec1}).  All our simplicial complexes will be finite and pure.  

The \emph{f-vector} of a $(d-1)$-dimensional simplicial complex $\Delta$ is the sequence $(f_0, f_1, \ldots, f_d)$, where $f_i$ counts the number of $(i-1)$-dimensional faces of $\Delta$.  The \emph{h-vector} of $\Delta$ is the sequence $(h_0, h_1, \ldots, h_d)$ given by 
\[
\sum_{i=0}^d f_i(t-1)^{d-i} = \sum_{i=0}^d h_it^{d-i}
\]

We use the following alternate definition of shellability, easily seen to be equivalent to the standard one (see \cite{bj}).  

\begin{defn}\label{altshell}
Let $F_1, F_2, \ldots, F_t$ be an ordering of the facets of $\Delta$.  This ordering is a shelling if and only if for all $j < k$ there exists a $j' < k$ satisfying
\[
F_j\cap F_k \subseteq F_{j'} \cap F_k = F_{j'} - x
\]
for some element $x$ of $F_{j'}$.  
\end{defn}

We also use the following result of Danaraj and Klee for showing that a given complex is a ball.

\begin{thm}[\cite{dk}]\label{danarajklee}
Let $\Delta$ be a full-dimensional shellable proper subcomplex of a sphere.  Then $\Delta$ is a ball.
\end{thm}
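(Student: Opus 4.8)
The plan is to prove the statement by a double induction: an outer induction on $d$, the common dimension of $\Delta$ and of the ambient sphere $S$, and, for a fixed $d$, an inner induction on the length $t$ of a shelling $F_1,\dots,F_t$ of $\Delta$. The cases $d\le 0$ are immediate, so I assume $d\ge 1$. Writing $\Delta_k=F_1\cup\cdots\cup F_k$, it suffices to show that each $\Delta_k$ is a (PL) $d$-ball, since then $\Delta=\Delta_t$ is one.

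Before the inductions I would record two facts. First, since $\Delta$ is full-dimensional in the triangulated sphere $S$, every codimension-$1$ face of $\Delta$, and hence of every $\Delta_k$, lies in at most two facets, so each $\Delta_k$ is a pseudomanifold; moreover, if every codimension-$1$ face of some $\Delta_k$ lay in exactly two of its facets, then a short connectivity argument on the dual graph of $S$ (which is connected, $S$ being a sphere) would force $\Delta_k=S$, contradicting $\Delta_k\subseteq\Delta\subsetneq S$. Hence every $\Delta_k$ has a nonempty boundary $\partial\Delta_k$. Second, the boundary complex $\partial F$ of a $d$-simplex $F$ is a $(d-1)$-sphere every linear ordering of whose facets is a shelling, so every proper full-dimensional subcomplex of $\partial F$ is an initial segment of such a shelling, and in particular is shellable.

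For the inner step I assume $\Delta_{k-1}$ is a $d$-ball ($\Delta_1=F_1$ is a simplex) and set $R_k=F_k\cap\Delta_{k-1}=\bigcup_{j<k}(F_j\cap F_k)$. Definition~\ref{altshell} says that for each $j<k$ the face $F_j\cap F_k$ lies in some facet $F_{j'}-x$ of $F_k$ with $j'<k$, and $F_{j'}-x=F_{j'}\cap F_k\subseteq R_k$; so $R_k$ is a union of facets of $\partial F_k$, a pure $(d-1)$-dimensional subcomplex of it. Also $R_k\subseteq\partial\Delta_{k-1}$: each codimension-$1$ face $\sigma\subseteq R_k$ lies in at least one facet of $\Delta_{k-1}$ by purity and in at most one, since its other neighbor in $S$ is $F_k\notin\Delta_{k-1}$. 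If $R_k\subsetneq\partial F_k$, then by the outer hypothesis in dimension $d-1$ the proper full-dimensional shellable subcomplex $R_k$ of the sphere $\partial F_k$ is a $(d-1)$-ball, and $\Delta_k=\Delta_{k-1}\cup F_k$ is the union of two $d$-balls meeting exactly along the $(d-1)$-ball $R_k$, which lies in the boundary of each; by the standard fact that such a union is a $d$-ball, $\Delta_k$ is a ball. If instead $R_k=\partial F_k$, then the $(d-1)$-sphere $\partial F_k$ is a full-dimensional subcomplex of the $(d-1)$-sphere $\partial\Delta_{k-1}$, hence equals it (by the connectivity argument in dimension $d-1$), so $\Delta_k$ would be a union of two $d$-balls along their whole common boundary, a $d$-sphere --- impossible by the first recorded fact. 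Either way $\Delta_k$ is a $d$-ball, closing the induction.

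The steps I expect to cost the most are the PL-topology inputs used as black boxes: that the union of two $d$-balls along a common $(d-1)$-ball in their boundaries is a $d$-ball (and along their entire common boundary a $d$-sphere), and that a full-dimensional closed pseudomanifold subcomplex of a triangulated $d$-sphere is the whole sphere. These carry the genuine topological content and are classical (via Newman's theorem and standard PL gluing lemmas); granting them, the combinatorics --- that $R_k$ is a pure subcomplex of $\partial F_k$ and an initial segment of a shelling of it, and that $R_k\subseteq\partial\Delta_{k-1}$ --- is routine from Definition~\ref{altshell} and the pseudomanifold property. I would also check that the double induction is well-founded: the dimension-$(d-1)$ instance of the theorem is invoked only to analyze $R_k$, whose dimension is strictly smaller than $d$, so no circularity arises.
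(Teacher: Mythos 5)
The paper offers no proof of this statement --- it is quoted from Danaraj and Klee \cite{dk} --- so there is nothing internal to compare against. Your argument is correct, and it is essentially the classical Danaraj--Klee induction: walk up the shelling, observe that the new simplex $F_k$ meets the previously built ball $\Delta_{k-1}$ in a pure, shellable, full-dimensional subcomplex $R_k$ of $\partial F_k$ lying in $\partial\Delta_{k-1}$, conclude $R_k$ is a $(d-1)$-ball by the dimension induction, and glue; properness inside the ambient sphere rules out $R_k=\partial F_k$ (closing up early). The inputs you leave as black boxes (the PL gluing lemmas/Newman's theorem, and the facts that in a triangulated closed connected manifold every codimension-$1$ face lies in exactly two facets and the dual graph is connected) are exactly the standard ones, and your combinatorial steps --- purity of $R_k$ via Definition \ref{altshell}, shellability of any proper pure full-dimensional subcomplex of $\partial F_k$, and $R_k\subseteq\partial\Delta_{k-1}$ --- all check out. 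The only spot worth tidying is the appeal to ``connectivity in dimension $d-1$'' when $d-1=0$: there the dual-graph argument is vacuous, but $\partial F_k=\partial\Delta_{k-1}$ holds trivially since both are two-point sets, so no real gap arises.
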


\ssubsection{Convex-ear decompositions}

\begin{defn}\label{ced}
Let $\Delta$ be a $(d-1)$-dimensional complex.  We say $\Delta$ admits a \emph{convex-ear decomposition}, or \emph{c.e.d.}, if there exists a sequence of pure $(d-1)$-dimensional subcomplexes $\Sigma_1, \Sigma_2, \ldots, \Sigma_t$ such that
\begin{itemize}
\item[i:] $\bigcup_{i=1}^t\Sigma_i = \Delta$.
\item[ii:] For $i >1$, $\Sigma_i$ is a proper subcomplex of the boundary complex of a $d$-dimensional simplicial polytope, while $\Sigma_1$ is the boundary complex of a $d$-dimensional simplicial polytope.
\item[iii:] Each $\Sigma_i$, for $i > 1$ is a topological ball.
\item[iv:] For $i > 1$, $\Sigma_i\cap (\bigcup_{j=1}^{i - 1} \Sigma_j) = \partial \Sigma_i$.  
\end{itemize}
\end{defn}

Convex-ear decompositions were introduced by Chari in \cite{ch}, where the following was proven.

\begin{thm}\label{hced}
When $\Delta$ admits a convex-ear decomposition its h-vector satisfies, for all $i \leq \lfloor d/2\rfloor$,
\begin{itemize}
\item[1:] $h_i \leq h_{i+1}$ and
\item[2:] $h_i \leq h_{d-i}$. 
\end{itemize}
\end{thm}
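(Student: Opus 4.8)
The plan is to compute $h_{\Delta}(t)$ additively from the decomposition and then check the two inequalities one summand at a time. First I would record the additivity. Since each ear $\Sigma_i$ with $i>1$ meets $\bigcup_{j<i}\Sigma_j$ exactly in $\partial\Sigma_i$ (condition iv of Definition \ref{ced}), every face of $\Delta$ lies in the relative interior of exactly one $\Sigma_i$ --- the first one, in the given order, that contains it. Hence the ``interior'' face numbers are additive, $f_k(\Delta)=f_k(\Sigma_1)+\sum_{i\ge 2}\bigl(f_k(\Sigma_i)-f_k(\partial\Sigma_i)\bigr)$, and applying the (linear) $f\mapsto h$ transform gives $h_{\Delta}(t)=h_{\Sigma_1}(t)+\sum_{i\ge 2}h_{(\Sigma_i,\partial\Sigma_i)}(t)$, where $h_{(\Sigma_i,\partial\Sigma_i)}$ denotes the relative $h$-polynomial. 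Since the integer vectors satisfying the conclusion of the theorem form a cone, it is enough to establish the conclusion for $h_{\Sigma_1}$ and for each relative summand separately.

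For $\Sigma_1$ this is immediate: it is the boundary complex of a simplicial $d$-polytope, so $h_{\Sigma_1}$ is symmetric by the Dehn--Sommerville relations and unimodal by the $g$-theorem, hence satisfies (1) and (2) (with equality in (2)). For an ear $\Sigma_i$, $i>1$: being a topological ball it is Cohen--Macaulay (Munkres) with $\tilde\chi(\Sigma_i)=0$, so $h_d(\Sigma_i)=0$, and relative Poincar\'e--Lefschetz duality for Cohen--Macaulay balls gives $h_{(\Sigma_i,\partial\Sigma_i)}(t)=t^{\,d}h_{\Sigma_i}(1/t)$; thus the relative summand is just $h_{\Sigma_i}$ read backwards, with vanishing constant term. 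It therefore remains to show that $h_{\Sigma_i}$ is weakly decreasing on the upper half of its index range and satisfies $h_j(\Sigma_i)\ge h_{d-j}(\Sigma_i)$ for $j\le\lfloor d/2\rfloor$. For this I would pass to the complementary ball $\Sigma_i^{c}:=\overline{\partial P_i\setminus\Sigma_i}$, which is again a ball with boundary $\partial\Sigma_i$. The decomposition $f_k(\Sigma_i)=f_k(\partial\Sigma_i)+\bigl(f_k(\Sigma_i)-f_k(\partial\Sigma_i)\bigr)$ together with the duality above yields the identity $h_j(\Sigma_i)-h_{d-j}(\Sigma_i)=h_j(\partial\Sigma_i)-h_{j-1}(\partial\Sigma_i)$, identifying the ear's asymmetry with a $g$-number of its boundary sphere; the $g$-theorem applied to the $(d-2)$-sphere $\partial\Sigma_i$ makes this non-negative for $j\le\lfloor d/2\rfloor$, giving (2), and a parallel but more delicate computation --- working inside $\partial P_i$ with the additive relation $h_{\partial P_i}(t)=h_{\Sigma_i}(t)+t^{\,d}h_{\Sigma_i^{c}}(1/t)$, Dehn--Sommerville for $\partial P_i$, and the fact that $\partial P_i$ contains $\Sigma_i^{c}$ as a subcomplex (so carries all its vertices) --- handles (1). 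As a low-dimensional sanity check: for $d=3$ the displayed identity reads $h_1(\Sigma_i)-h_2(\Sigma_i)=(\#\,\text{vertices of the cycle }\partial\Sigma_i)-3\ge 0$.

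The hard part will be this last step, the monotonicity of the ears' relative $h$-vectors. The additivity and duality identities are symmetric under interchanging $\Sigma_i$ with $\Sigma_i^{c}$, so on their own they cannot determine the sign of any asymmetry; the needed positivity must be fed in through convexity, via the $g$-theorem for the polytope boundaries (and for the spheres $\partial\Sigma_i$, which in the constructions to which this theorem is applied are polytopal). This is exactly where the word ``convex'' in ``convex-ear'' does the work: for ear decompositions into balls glued along their boundaries without the polytopal hypothesis, the conclusion can fail. Once the per-piece inequalities are in hand, assembling them into the statement for $h_{\Delta}$ is routine.
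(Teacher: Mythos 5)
Your setup is sound and is in fact the standard skeleton (the paper itself does not reprove this result; it cites Chari): condition (iv) of Definition \ref{ced} does give the interior-face additivity $f_k(\Delta)=f_k(\Sigma_1)+\sum_{i\ge2}(f_k(\Sigma_i)-f_k(\partial\Sigma_i))$, the relative $h$-polynomial identity, the ball duality $h_j(\Sigma_i,\partial\Sigma_i)=h_{d-j}(\Sigma_i)$, Dehn--Sommerville plus the $g$-theorem for $\Sigma_1$, and the identity $h_j(\Sigma_i)-h_{d-j}(\Sigma_i)=h_j(\partial\Sigma_i)-h_{j-1}(\partial\Sigma_i)$ are all correct. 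The gap is precisely the per-ear positivity you reduce to, and which you yourself label ``the hard part.'' For inequality (2) you invoke ``the $g$-theorem applied to the $(d-2)$-sphere $\partial\Sigma_i$,'' but $\partial\Sigma_i$ is merely a simplicial sphere occurring as a subcomplex of the polytopal sphere $\partial P_i$; nothing in Definition \ref{ced} makes it polytopal, the classical $g$-theorem applies only to polytope boundaries, and $g$-nonnegativity for general simplicial spheres was an open conjecture when this theorem was proved --- your parenthetical ``which in the constructions to which this theorem is applied are polytopal'' assumes something the hypotheses do not grant. For inequality (1), i.e.\ $h_j(\Sigma_i)\ge h_{j+1}(\Sigma_i)$ for $j\ge d/2$, you give only a gesture (``a parallel but more delicate computation''), and as you concede, the additivity and duality identities are symmetric in $\Sigma_i$ and its complement, so they cannot produce the sign. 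Thus both of the inequalities that carry the actual content of the theorem are left unproved.

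The standard way to close the gap feeds in convexity algebraically rather than through the ears' boundaries: by the algebraic $g$-theorem, $R=k[\partial P_i]/(\Theta)$ ($\Theta$ an l.s.o.p.) has a Lefschetz element $\omega$ with $\omega\colon R_j\to R_{j+1}$ surjective for $j\ge d/2$ and $\omega^{d-2j}\colon R_j\to R_{d-j}$ bijective for $j\le d/2$. Since $\Sigma_i$ is a full-dimensional Cohen--Macaulay subcomplex of $\partial P_i$, $\Theta$ remains an l.s.o.p.\ for $k[\Sigma_i]$, and the degree-preserving surjection $R\twoheadrightarrow R'=k[\Sigma_i]/(\Theta)$, with $\dim_k R'_j=h_j(\Sigma_i)$, transports these surjections to $R'$, giving exactly $h_j(\Sigma_i)\ge h_{j+1}(\Sigma_i)$ for $j\ge d/2$ and $h_j(\Sigma_i)\ge h_{d-j}(\Sigma_i)$ for $j\le d/2$ --- the two facts your reduction needs; this is the Chari/Swartz route. (Two smaller points: the claim that $\overline{\partial P_i\setminus\Sigma_i}$ is again a ball needs justification and is in any case not needed; and as printed, item 1 of the statement with $i\le\lfloor d/2\rfloor$ fails for even $d$ already for $\Sigma_1$ alone, e.g.\ the pentagon with $h=(1,3,1)$, so the correct range is $i<d/2$ as in the paper's introduction, which is what the per-piece argument actually yields.)
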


In \cite{ed}, Swartz proved the following analogue of the g-Theorem for complexes with convex-ear decompositions.  An \emph{M-vector} is the degree sequence of an order ideal of monomials.  

\begin{thm}
Let $\Delta$ be a complex admitting a convex-ear decomposition, with h-vector $(h_0, h_1, \ldots, h_d)$.  Then the vector 
\[
(h_0, h_1 - h_0, h_2 - h_1, \ldots, h_{\lfloor d/2 \rfloor} - h_{\lfloor d/2 \rfloor -1} )
\]
is an M-vector.  Furthermore, $\Delta$ is $2$-CM.  
\end{thm}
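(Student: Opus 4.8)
The plan is to establish the two assertions separately: first that $\Delta$ is doubly Cohen–Macaulay, and then — leaning on the g-theorem for simplicial polytopes — that the vector of successive differences $h_i - h_{i-1}$ up to the middle is an M-vector.

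For the $2$-CM claim I would induct on the number of ears. Write $\Delta_i = \Sigma_1 \cup \cdots \cup \Sigma_i$, so that $\Delta_1 = \Sigma_1$ is a simplicial polytope boundary, hence a sphere and in particular $2$-CM, while the inductive step attaches $\Sigma_i$ along its boundary $(d-2)$-sphere $\partial\Sigma_i$. To conclude that $\Delta = \Delta_t$ is $2$-CM it suffices to show that for every vertex $v$ the induced subcomplex $\Delta \setminus v$ is Cohen–Macaulay of dimension $d-1$, and I would prove this by showing the same for each $\Delta_i \setminus v$, again by induction on $i$. The local pictures to track: $\Sigma_1\setminus v$ is the antistar of $v$ in a polytope boundary, a shellable ball; each $\Sigma_i\setminus v$ for $i>1$ is either $\Sigma_i$ itself, a ball with a boundary vertex deleted, or a ball with an interior vertex deleted — in every case a homology manifold with boundary, hence CM of dimension $d-1$; and $(\Sigma_i\setminus v)\cap(\Delta_{i-1}\setminus v) = \partial\Sigma_i\setminus v$ is either a $(d-2)$-sphere or a $(d-2)$-ball. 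A Mayer–Vietoris computation, run not just for $\Delta_i\setminus v$ but simultaneously for all of its links via Reisner's criterion, then propagates the CM property across each attachment, and the same bookkeeping keeps the dimension equal to $d-1$.

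For the M-vector statement I would produce a \emph{g-element}: a linear system of parameters $\Theta = \theta_1,\dots,\theta_d$ for the Stanley–Reisner ring $k[\Delta]$ over an infinite field, together with a linear form $\omega$ such that $\times\omega\colon (k[\Delta]/\Theta)_{i-1}\to(k[\Delta]/\Theta)_i$ is injective for all $i\le \lfloor d/2\rfloor$. Granting such an $\omega$, the algebra $A = k[\Delta]/(\Theta,\omega)$ truncated in degrees $\le\lfloor d/2\rfloor$ is a standard graded $k$-algebra with Hilbert function $(h_0,\,h_1-h_0,\dots,h_{\lfloor d/2\rfloor}-h_{\lfloor d/2\rfloor-1})$, so that this vector is an M-vector by Macaulay's theorem; the inequalities $h_{i-1}\le h_i$ of Theorem~\ref{hced} also fall out for free. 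To build $\omega$ I would use the additive decomposition $h(\Delta) = h(\Sigma_1) + \sum_{i>1} h(\Sigma_i,\partial\Sigma_i)$ of the h-vector together with the hard Lefschetz theorem (Stanley's g-theorem) applied to the simplicial polytope boundaries $\Sigma_1$ and the $P_i \supset \Sigma_i$: each supplies a Lefschetz linear form, and conditions (iii)–(iv) of Definition~\ref{ced} furnish short exact sequences relating the Artinian reductions of $\Delta_i$, $\Delta_{i-1}$, $\Sigma_i$, and $\partial\Sigma_i$ through which one splices the individual Lefschetz maps into a single $\omega$ working in all degrees below $d/2$.

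The hard part will be exactly this last splicing: passing from the Lefschetz property of the individual polytope boundaries to a g-element for the glued complex requires a \emph{relative} Lefschetz statement for the balls $(\Sigma_i,\partial\Sigma_i)$ and careful control of how multiplication by a generic linear form interacts with the Mayer–Vietoris exact sequences of the decomposition — one must keep the maps injective after each attachment, not merely match dimensions. A secondary technical nuisance is making the $2$-CM induction uniform across all links, where the sphere/ball/shell cases must be checked with care in low dimensions. I would expect the purely combinatorial alternative — deriving the M-vector property from reverse line shellings of the $P_i$ and the symmetry $h_k(\Sigma_i,\partial\Sigma_i)=h_{d-k}(\Sigma_i)$ — to be considerably messier, so I would commit to the g-element route.
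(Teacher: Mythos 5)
A point of context first: the paper does not prove this statement at all --- it is Swartz's theorem, quoted from \cite{ed} immediately after Chari's Theorem \ref{hced}, so there is no internal proof to compare you against. The honest benchmark is Swartz's own argument, and your second half is in outline the strategy actually used there: pass to an Artinian reduction, extract Lefschetz linear forms from the simplicial polytopes supplying the ears, and transport them through the exact sequences coming from attaching each $\Sigma_i$ along $\partial\Sigma_i$, so that the truncated quotient has Hilbert function $(h_0,h_1-h_0,\ldots)$ and Macaulay's theorem finishes. So the plan is pointed in the right direction.

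As a proof, however, it has two genuine gaps. First, the step you defer --- the ``relative Lefschetz statement for the pairs $(\Sigma_i,\partial\Sigma_i)$'' and the splicing of the individual Lefschetz maps into one $\omega$ that stays injective after every attachment --- is not a technical nuisance to be cleaned up later; it is the entire content of the theorem. The short exact sequences you invoke control Hilbert functions, not maps: knowing that each polytope boundary has a Lefschetz element does not by itself produce a single linear form whose multiplication map is injective on $k[\Sigma_1\cup\cdots\cup\Sigma_i]/\Theta$ in degrees $\leq \lfloor d/2\rfloor$, and nothing in the sketch says how injectivity survives the gluing. Second, the $2$-CM induction contains a concretely false claim. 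If $v$ is an interior vertex of the $(d-1)$-ball $\Sigma_i$, then writing $\Sigma_i=\mathrm{star}(v)\cup(\Sigma_i\setminus v)$ with $\mathrm{star}(v)\cap(\Sigma_i\setminus v)=\mathrm{lk}(v)\simeq S^{d-2}$, Mayer--Vietoris (both $\Sigma_i$ and $\mathrm{star}(v)$ being acyclic) gives $\tilde H_{d-2}(\Sigma_i\setminus v)\cong\tilde H_{d-2}(S^{d-2})\neq 0$, so $\Sigma_i\setminus v$ is not Cohen--Macaulay of dimension $d-1$ (and in the degenerate case $\Sigma_i=\mathrm{star}(v)$ it has the wrong dimension). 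The auxiliary assertion ``homology manifold with boundary, hence CM'' is also false in general --- a triangulated solid torus is a counterexample --- and vertex deletion from a ball need not yield a manifold with boundary anyway. Consequently the plan of certifying $\Delta\setminus v$ piecewise, ear by ear, cannot work as stated; the induction must be run on the unions $\Delta_i\setminus v$ directly (or by an entirely different argument, as in \cite{ed}), and that is where the real work in the $2$-CM half lies.
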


\ssubsection{Order complexes and flag vectors}

Recall that the \emph{order complex} of a poset $P$, which we write as $\Delta(P)$, is the simplicial complex whose faces are chains in $P$.  If $P$ has a unique minimal element $\hat{0}$ or a unique maximal element $\hat{1}$, we do not include these in the order complex.  That is, simplices in $\Delta(P)$ are chains in $P - \{\hat{0}, \hat{1}\}$.  All our posets are ranked. 

For the remainder of this section, let $P$ be a rank-$d$ poset with a $\hat{0}$ and $\hat{1}$.  A \emph{labeling} of $P$ is a function $\lambda: \{(x, y)\in P^2 : y \text{ covers } x\} \rightarrow \mathbb{Z}$.  For a saturated chain
\[
\bc: = x = x_0 < x_1 < \cdots < x_t = y
\]
the $\lambda$-label of $\bc$, written $\lambda(\bc)$, is the word
\[
\lambda(x_0, x_1)\lambda(x_1, x_2) \cdots \lambda(x_{t-1}, x_t).
\]

\begin{defn}
A labeling $\lambda$ of $P$ is an \emph{EL-labeling} if:
\begin{itemize}
\item[1:] in each interval $[x ,y]$ in $P$, there is a unique saturated chain with a strictly increasing label, and
\item[2:] the label of this chain is lexicographically first among the labels of all saturated chains in $[x, y]$.
\end{itemize} 
If $\lambda$ is an EL-labeling of $P$ and each maximal chain is labeled with a permutation of $[d]$ (that is, an element of $\mathcal{S}_d$) then $\lambda$ is called an \emph{$\mathcal{S}_d$-EL-labeling}.
\end{defn}

\begin{example}
The Boolean lattice $B_d$ admits an $\mathcal{S}_d$-EL-labeling in an obvious way: if $y$ covers $x$ then $y = x \cup \{i\}$, so set $\lambda(x, y) = i$.  
\end{example}

When $P$ admits an EL-labeling $\lambda$ and $\bc$ is a chain in $P$, we write $\Upsilon_\lambda(\bc)$ to denote the maximal chain of $P$ obtained by filling in each gap in $\bc$ with the unique chain in that interval with increasing $\lambda$-label.  EL-labelings were introduced by Bj\"orner and Wachs, where the following was shown.

\begin{thm}[\cite{bw}]
If $P$ admits an EL-labeling then $\Delta(P)$ is shellable.
\end{thm}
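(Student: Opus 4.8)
The plan is to prove the classical fact that, given an EL-labeling $\lambda$, the \emph{lexicographic order} on the maximal chains of $P$ — a maximal chain $\bc$ being ordered by its label word $\lambda(\bc)$ — is a shelling of $\Delta(P)$ in the sense of Definition~\ref{altshell} (any tie in this preorder being broken arbitrarily). Identify each maximal chain $\bc$ with the facet $F_{\bc}$ of $\Delta(P)$ consisting of its elements other than $\hat 0$ and $\hat 1$; note $\Delta(P)$ is pure, as every maximal chain of a rank-$d$ poset with $\hat 0,\hat 1$ has exactly $d-1$ such elements. So the task reduces to the following: fix maximal chains $\bc <_{\mathrm{lex}} \bd$, write $\bd:\ \hat 0 = y_0 \lessdot y_1 \lessdot \cdots \lessdot y_d = \hat 1$, and produce a maximal chain $\bc'$ with $\bc' <_{\mathrm{lex}} \bd$ that agrees with $\bd$ except at a single element $y_m$, with moreover $y_m \notin \bc$. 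Granting this, $F_{\bc}\cap F_{\bd} \subseteq F_{\bd}\setminus\{y_m\} = F_{\bc'}\cap F_{\bd} = F_{\bc'}\setminus\{z\}$, where $z$ is the element of $\bc'$ replacing $y_m$; this is exactly the inequality demanded by Definition~\ref{altshell}, and the first facet needs no check since that definition is vacuous for $k=1$.

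To construct $\bc'$, let $i$ be least with $y_i\notin\bc$ (it exists since $\bc\ne\bd$) and let $j$ be least among indices $>i$ with $y_j\in\bc$. Since $y_0,\dots,y_{i-1}\in\bc$ and $\bc$ is saturated, $\bc$ must begin $y_0\lessdot\cdots\lessdot y_{i-1}$, so $\lambda(\bc)$ and $\lambda(\bd)$ agree in positions $1,\dots,i-1$, and both chains restricted to the interval $[y_{i-1},y_j]$ contribute exactly positions $i,\dots,j$ of their label words. Now compare the label words beyond position $i-1$. If the first position at which they differ lies in $\{i,\dots,j\}$, then (as $\bc <_{\mathrm{lex}}\bd$) $\lambda(\bc|_{[y_{i-1},y_j]}) <_{\mathrm{lex}} \lambda(\bd|_{[y_{i-1},y_j]})$, so $\bd|_{[y_{i-1},y_j]}$ is not the lexicographically least — hence, by the EL property, not the unique increasing — saturated chain of $[y_{i-1},y_j]$. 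Otherwise $\lambda(\bc)$ and $\lambda(\bd)$ agree through position $j$, and then $\bc$ and $\bd$ restrict to \emph{distinct} saturated chains of $[y_{i-1},y_j]$ carrying the \emph{same} label (distinct because $y_i\in\bd$ but $y_i\notin\bc$), so again $\bd|_{[y_{i-1},y_j]}$ cannot be the unique increasing chain. In either case $\bd|_{[y_{i-1},y_j]}$ has non-increasing label, hence a descent: some $m$ with $i\le m\le j-1$ and $\lambda(y_{m-1},y_m) > \lambda(y_m,y_{m+1})$. By minimality of $j$ we have $y_i,\dots,y_{j-1}\notin\bc$, so in particular $y_m\notin\bc$. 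Finally $[y_{m-1},y_{m+1}]$ has rank $2$, so its unique increasing chain has the form $y_{m-1}\lessdot z\lessdot y_{m+1}$ with $z\ne y_m$ (the descent rules out $z=y_m$); let $\bc'$ be $\bd$ with $y_m$ replaced by $z$. Since the increasing chain's label is lexicographically first in $[y_{m-1},y_{m+1}]$ and $\bc'$ agrees with $\bd$ outside positions $m,m{+}1$, we get $\bc' <_{\mathrm{lex}} \bd$ strictly, as needed.

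The step I expect to be the crux is the dichotomy above establishing that $\bd|_{[y_{i-1},y_j]}$ is not the increasing chain of its interval — in particular the second branch, which is forced by the fact that a \emph{general} EL-labeling may assign equal label words to two different maximal chains, so the lexicographic relation is only a preorder. What makes the argument robust to this is that the chain $\bc'$ produced has a \emph{strictly} smaller label word than $\bd$, so it precedes $\bd$ in every total order refining the lexicographic one; hence the tie-breaking is irrelevant. The remaining pieces — purity of $\Delta(P)$, and the set-theoretic verification $F_{\bc}\cap F_{\bd}\subseteq F_{\bc'}\cap F_{\bd}=F_{\bc'}\setminus\{z\}$ using $y_m\notin\bc$ and $z\notin\bd$ — are routine.
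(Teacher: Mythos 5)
Your proof is correct, and it is essentially the classical Bj\"orner--Wachs lexicographic-shelling argument; the paper itself offers no proof of this statement (it only cites \cite{bw}), so there is nothing for your approach to diverge from. One nitpick: ``not strictly increasing'' only gives a weak descent $\lambda(y_{m-1},y_m) \geq \lambda(y_m,y_{m+1})$ rather than the strict one you state, but your argument survives unchanged, since the chain through $y_m$ is still not the increasing chain of $[y_{m-1},y_{m+1}]$, forcing $z \neq y_m$ and (because a strictly increasing two-letter word cannot equal one with a weak descent) $\lambda(\bc') <_{\mathrm{lex}} \lambda(\bd)$ strictly.
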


For any $S\subseteq [d-1]$ and any maximal chain 
\[
\bc : = \hat{0} = x_0 < x_1 < x_2 < \cdots < x_d = \hat{1}
\]
of $P$, let $\bc_S$ denote the chain of elements of $\bc$ whose ranks lie in $S\cup \{0, d\}$.  The \emph{rank-selected subposet} $P_S$ is the subposet of $P$ whose maximal chains are all of the form $\bc_S$, as $\bc$ ranges over all maximal chains of $P$.  Equivalently, $P_S$ is the poset $P$ restricted to all elements with ranks in $S\cup\{0, d\}$.  

For any $S \subseteq [d-1]$, let $f_S$ be the number of maximal chains in $P_S$.  The collection $\{f_S\}$ is known as the \emph{flag f-vector} of $P$.  Note that the flag f-vector of $P$ refines its f-vector, as clearly
\[
f_i(P) = \sum_{S\subseteq [d-1], |S| = i} f_S(P).
\]
The \emph{flag h-vector} of $P$ is the collection $\{h_S\}$ defined by 
\[
h_S = \sum_{T\subseteq S} (-1)^{|S - T|}f_T.
\]
By inclusion-exclusion, the above is equivalent to $f_T = \sum_{S\subseteq T} h_S$.  It follows that the h-vector is refined by the flag h-vector, namely
\[
h_i(P) = \sum_{S \subseteq [d-1], |S| = i} h_S(P).
\]
When $P$ has an EL-labeling, its flag h-vector has a nice enumerative interpretation.

\begin{thm}[\cite{bw}]
The flag h-vector $\{h_S\}$ of a poset $P$ with EL-labeling $\lambda$ is given as follows: $h_S$ counts the number of maximal chains of $P$ whose $\lambda$-labels have descent set $S$.  
\end{thm}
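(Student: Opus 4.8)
The plan is to prove directly that $h_S$, as given by the alternating sum $h_S = \sum_{T \subseteq S}(-1)^{|S-T|}f_T$, equals the number $\beta_S$ of maximal chains of $P$ whose $\lambda$-label has descent set exactly $S$. Since the transformation $\{f_T\} \leftrightarrow \{h_S\}$ is invertible — the excerpt records that it is equivalent to $f_T = \sum_{S \subseteq T}h_S$ — it is enough to check that the numbers $\beta_S$ satisfy the same inversion, namely
\[
f_T = \sum_{S \subseteq T}\beta_S \qquad\text{for every } T \subseteq [d-1].
\]
The right-hand side is the number of maximal chains of $P$ whose descent set is contained in $T$, while $f_T$ is by definition the number of maximal chains of the rank-selected subposet $P_T$, so everything comes down to a bijection between these two sets.

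The bijection I would use is $\bd \mapsto \Upsilon_\lambda(\bd)$ in one direction and $\bc \mapsto \bc_T$ in the other. Write $T = \{t_1 < \cdots < t_{k-1}\}$ and set $t_0 = 0$, $t_k = d$. Given a maximal chain $\bd:\ \hat{0} = y_0 < \cdots < y_k = \hat{1}$ of $P_T$ (with $y_i$ of rank $t_i$), the chain $\Upsilon_\lambda(\bd)$ is well-defined by part 1 of the EL-labeling axiom, since there is a unique strictly increasing saturated chain in each interval $[y_{i-1}, y_i]$; it restricts back to $\bd$, and since its $\lambda$-label is strictly increasing along each of these intervals, its only possible descents occur at the ranks $t_1, \ldots, t_{k-1}$, so its descent set is contained in $T$. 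Conversely, let $\bc:\ \hat{0} = x_0 < \cdots < x_d = \hat{1}$ be a maximal chain of $P$ with label $a_1\cdots a_d$ (so $a_j = \lambda(x_{j-1}, x_j)$) and descent set contained in $T$. For each $i$ the subword $a_{t_{i-1}+1}\cdots a_{t_i}$ is strictly increasing, so $x_{t_{i-1}} < \cdots < x_{t_i}$ is the unique increasing saturated chain of the interval $[x_{t_{i-1}}, x_{t_i}]$, which is exactly the $i$-th gap of $\bc_T$. Hence filling in the gaps of $\bc_T$ recovers $\bc$, i.e.\ $\Upsilon_\lambda(\bc_T) = \bc$, and the two maps are mutually inverse.

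Combining these, $f_T$ equals the number of maximal chains of $P$ with descent set contained in $T$, that is $\sum_{S \subseteq T}\beta_S$, and Möbius inversion over the Boolean lattice of subsets of $[d-1]$ then gives $\beta_S = \sum_{T \subseteq S}(-1)^{|S-T|}f_T = h_S$, as desired. I do not anticipate a genuine obstacle here: the one point requiring care is the bookkeeping that identifies the descent positions of a maximal chain outside $T$ with positions interior to the gaps of $\bc_T$. It is also worth remarking that only part 1 of the definition of an EL-labeling — existence and uniqueness of an increasing saturated chain in each interval — is used; the lexicographic-minimality clause plays no role in this statement.
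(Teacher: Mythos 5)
Your proof is correct. The paper does not prove this statement at all --- it is quoted from Bj\"orner--Wachs --- and your argument (the bijection $\bd \mapsto \Upsilon_\lambda(\bd)$, $\bc \mapsto \bc_T$ between maximal chains of $P_T$ and maximal chains of $P$ with descent set contained in $T$, followed by inclusion--exclusion over subsets of $[d-1]$) is the standard one from the literature; your remark that only the existence-and-uniqueness clause of the EL-labeling definition is needed, and not lexicographic minimality, is also accurate.
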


\ssubsection{Dominance in $\mathcal{S}_d$.}

Let $\sigma$ be a permutation in the symmetric group $\mathcal{S}_d$.  We view $\sigma$ as a word in $[d]$, writing $\sigma = \sigma(1)\sigma(2)\cdots \sigma(d)$.  If $\sigma(i) < \sigma(i+1)$ call the interchanging of $\sigma(i)$ and $\sigma(i+1)$ in $\sigma$ a \emph{switch}.  

Recall that the \emph{weak order} on $\mathcal{S}_d$, for which we write $<_w$, is the partial order given by the following property:  $\sigma <_w \tau$ if and only if $\tau$ can be obtained from $\sigma$ by a sequence of switches.  

For $S \subseteq [d-1]$, let $D_S^d$ denote the set of permutation in $\mathcal{S}_d$ whose descent sets equal $S$:
\[
\sigma\in D_S^d \Leftrightarrow \{i: \sigma(i) > \sigma(i+1)\} = S.
\]

\begin{defn}\label{dominance}
Let $S, T \subseteq [d-1]$.  We say that $S$ \emph{dominates} $T$ if there exists an injection $\phi: D_T^d \rightarrow D_S^d$ such that $\sigma <_w \phi(\sigma)$ for all $\sigma\in D_T^d$.
\end{defn}

For example, let $d = 4$.  Then the set $\{1, 3\}$ dominates the set $\{1\}$ via the map 
\[
\sigma(1)\sigma(2)\sigma(3)\sigma(4) \rightarrow \sigma(1)\sigma(2)\sigma(4)\sigma(3).
\]

For a further discussion of dominance in the symmetric group, see \cite{ns} or \cite{de}.  

\ssection{A decomposition theorem}\label{central}

The goal of this section is to prove the following theorem, which we will apply in Sections \ref{geolattice} and \ref{faceposetsection}. 

\begin{thm}\label{boolpieces}
Let $P$ be a rank-$d$ poset with a $\hat{0}$ and a $\hat{1}$, and suppose $P_1, P_2, \ldots, P_r$ are subposets of $P$ satisfying the following properties.
\begin{itemize}
\item[1:] Each $P_i$ is isomorphic to the Boolean lattice $B_d$.
\item[2:] Every chain in $P$ is a chain in some $P_i$.  Equivalently, 
\[
\Delta(P) = \bigcup_{i=1}^r \Delta(P_i).
\]
\item[3:] Each $P_i$ has an $\mathcal{S}_d$-EL-labeling $\lambda_i$ with the following property: if $\textbf{e}$ is a chain in $P_i$ that is also a chain in some $P_j$ for $j <i$, then $\Upsilon_{\lambda_i}(\textbf{e})$ is a chain in $P_j$ for some $j < i$.  
\end{itemize}
Then $\Delta(P_S)$ admits a convex-ear decomposition for any $S \subseteq [d-1]$.
\end{thm}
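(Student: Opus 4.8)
The plan is to build a convex-ear decomposition of $\Delta(P_S)$ by concatenating, for $i=1,\dots,r$ in order, convex-ear decompositions of the Boolean pieces $\Delta((P_i)_S)$, discarding at each stage the part that has already been assembled. Fix $S\subseteq[d-1]$ and set $\Gamma_i=\bigcup_{j\le i}\Delta((P_j)_S)$, so that $\Gamma_r=\Delta(P_S)$ by hypothesis 2. Using the isomorphism $P_i\cong B_d$ together with the $\mathcal S_d$-EL-labeling $\lambda_i$, I would transport to $\Delta((P_i)_S)$ the convex-ear decomposition of $\Delta((B_d)_S)$ constructed in \cite{me}, writing its ears in order as $\Sigma^i_1,\dots,\Sigma^i_{m_i}$ with $\Sigma^i_1$ the boundary of a simplicial polytope. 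Everything then hinges on understanding the overlap complex $X_i:=\Delta((P_i)_S)\cap\Gamma_{i-1}$.

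First I would show that $X_i$ is a pure $(|S|-1)$-dimensional subcomplex of $\Delta((P_i)_S)$, and this is exactly where hypothesis 3 enters. Let $\mathbf{e}$ be a face of $X_i$, i.e.\ a chain of $(P_i)_S$ that is also a chain of $(P_j)_S$ for some $j<i$. Regarding $\mathbf{e}$ as a chain of $P_i$ that lies in $P_j$, hypothesis 3 gives that $\Upsilon_{\lambda_i}(\mathbf{e})$ is a chain of $P_{j'}$ for some $j'<i$. Since $\mathbf{e}$ uses only ranks in $S$, restricting the maximal chain $\Upsilon_{\lambda_i}(\mathbf{e})$ to the ranks in $S\cup\{0,d\}$ produces a facet of $\Delta((P_i)_S)$ that contains $\mathbf{e}$ and lies in $\Delta((P_{j'})_S)\subseteq\Gamma_{i-1}$, hence lies in $X_i$. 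Thus every face of $X_i$ is contained in a facet of $\Delta((P_i)_S)$ belonging to $X_i$, so $X_i$ is pure of full dimension. Applying the same argument to $\mathbf{e}=\emptyset$ shows that the $\lambda_i$-increasing maximal chain of $P_i$ already occurs in some $P_j$ with $j<i$; in particular $X_i\neq\emptyset$ whenever $i>1$.

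The key step — and what I expect to be the \textbf{main obstacle} — is to upgrade this to the statement that $X_i$ equals $\Sigma^i_1\cup\cdots\cup\Sigma^i_{p_i}$, a union of an \emph{initial segment} of the ears of the transported decomposition, where $p_1=0$ and $1\le p_i\le m_i$ for $i>1$ (the case $p_i=m_i$ meaning $P_i$ contributes nothing new and is simply skipped). This requires opening up the construction of \cite{me}: there the ears of $\Delta((B_d)_S)$ are read off from an EL-shelling and are ordered in a way dictated by the labeling, with the completion operator $\Upsilon$ governing which facet is ``responsible'' for each face. Because $\Upsilon_{\lambda_i}$ is precisely completion by increasing labels, the closure property of $X_i$ established above — if $\mathbf{e}\in X_i$ then the restriction to $S$ of $\Upsilon_{\lambda_i}(\mathbf{e})$ lies in $X_i$ — matches the order on facets used in \cite{me}, so $X_i$ is downward closed there and hence a union of an initial block of ears. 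If the decomposition of \cite{me} does not literally have this form, one re-chooses or reorders the ears of $\Delta((P_i)_S)$ (the construction leaves room for this) so that it does; reconciling hypothesis 3 with the internals of \cite{me} in this way is the real content of the proof, and $\Sigma^i_1\subseteq X_i$ for $i>1$ — already implied by the previous paragraph — is the prototype of what must be checked.

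Granting the initial-segment claim, the decomposition I would take for $\Delta(P_S)$ is the list, for $i=1,\dots,r$ in order, of the ears $\Sigma^i_{p_i+1},\dots,\Sigma^i_{m_i}$. Its first member is $\Sigma^1_1$, a simplicial polytope boundary; every other listed ear has index at least $2$ within its own decomposition, hence is a proper subcomplex of a simplicial polytope boundary and is a topological ball, being a non-initial ear of a convex-ear decomposition. This gives parts (ii) and (iii) of Definition \ref{ced}. For (i), a straightforward induction using $X_i\cup\Sigma^i_{p_i+1}\cup\cdots\cup\Sigma^i_{m_i}=\Delta((P_i)_S)$ shows that the union of the ears listed for $P_1,\dots,P_i$ equals $\Gamma_i$, so the full list has union $\Gamma_r=\Delta(P_S)$. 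For (iv), consider a listed ear $\Sigma^i_k$ with $k>p_i$; the union of all ears listed before it meets $\Delta((P_i)_S)$ in exactly $X_i\cup\Sigma^i_{p_i+1}\cup\cdots\cup\Sigma^i_{k-1}=\Sigma^i_1\cup\cdots\cup\Sigma^i_{k-1}$, and since $\Sigma^i_k\subseteq\Delta((P_i)_S)$ this yields $\Sigma^i_k\cap(\text{earlier listed ears})=\Sigma^i_k\cap(\Sigma^i_1\cup\cdots\cup\Sigma^i_{k-1})=\partial\Sigma^i_k$ by property (iv) of the transported decomposition. This verifies all four axioms, modulo the combinatorial heart described in the previous paragraph.
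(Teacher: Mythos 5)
Your reduction hinges on the claim that the overlap $X_i=\Delta((P_i)_S)\cap\Gamma_{i-1}$ is a union of an \emph{initial segment of whole ears} of the transported decomposition of $\Delta((P_i)_S)$, so that one can simply discard those ears and keep the rest intact. That claim is false in general, and hypothesis 3 is not strong enough to force it. Take $d=3$, $S=\{1,2\}$, and let $\Sigma$ be two triangles $F_1=\{a,b,c\}$, $F_2=\{b,c,e\}$ glued along the edge $bc$, with $P_1,P_2$ the face posets of $F_1,F_2$ (tops identified to $\hat{1}$) and $\lambda_2$ labeling $e$ last, as in the paper's Section \ref{faceposetsection}; all hypotheses of the theorem hold. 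Here $\Delta((P_2)_S)$ is a hexagon, and since only one permutation of $[3]$ has descent set $\{1,2\}$, the decomposition of \cite{me} for this piece consists of a \emph{single} ear, namely the whole hexagon. But $X_2$ is the path $b<bc>c$: it is neither empty nor the whole hexagon, so it is not a union of ears under any choice or reordering, and your fallback (``re-choose or reorder the ears'') cannot repair this. Your proposed list of ears then fails: keeping the hexagon violates (iii) and (iv) (it is a sphere, and it meets the earlier part in a path, not a boundary), while dropping it violates (i).

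The actual argument must trim each ear facet-by-facet rather than ear-by-ear: one takes $\Sigma_k$ to be the subcomplex of the $k$-th ear $\Gamma_k$ of $\Delta((P_q)_S)$ spanned by those facets of $\Gamma_k$ that are \emph{not} chains of the previously assembled complex $X$. This is where the real work lies, and it is absent from your proposal: one must show that each trimmed $\Sigma_k$ is still a shellable ball (the paper restricts the shelling of Lemma \ref{sigmashell} and uses hypothesis 3 together with the fact that $B_d$ has exactly two maximal chains through a corank-one chain to show the connecting facet $\be$ produced by the shelling is itself not in $X$), that the trimmed \emph{first} ear is now a \emph{proper} subcomplex of the polytope boundary (via Lemma \ref{increasing} and hypothesis 3 applied to the chain $\hat{0}<\hat{1}$, which shows the increasing chain $\bc_S$ already lies in $X$ --- note this ear is trimmed, not discarded, contrary to your scheme), and that $\Sigma_k$ meets the union of everything earlier exactly in $\partial\Sigma_k$, where the earlier union now includes $X$ and not just ears of the same Boolean piece. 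Your observation that hypothesis 3 makes $X_i$ closed under $\be\mapsto(\Upsilon_{\lambda_i}(\be))_S$ is correct and is indeed one ingredient of these verifications, but by itself it does not yield the initial-segment structure your construction requires.
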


The proof of this theorem relies heavily on our work from \cite{me}, where a c.e.d. of $\Delta((B_d)_S)$ was given for any $S \subseteq [d-1]$.  We now review some results from \cite{me} which will be helpful in proving Theorem \ref{boolpieces}.      

Fix some $S \subseteq [d-1]$ and an $\mathcal{S}_d$-EL-labeling $\lambda$ of $B_d$.  Let $\bd_1, \bd_2, \ldots, \bd_t$ be all maximal chains in $B_d$ whose $\lambda$-labels have descent set $S$, written in lexicographic order of their $\lambda$-labels.  For each $i$, let $L_i$ be the subposet of $(B_d)_S$ generated by the set of maximal chains
\[
\{ \bc_S : \bc \text{ is a maximal chain of } B_d \text{ with } \bc_{[d-1]\setminus S} = (\bd_i)_{[d-1]\setminus S}\}
\]  
Finally, let $\Gamma_i$ be the simplicial complex with facets given by maximal chains in $L_i$ that are not chains in $L_j$ for any $j < i$.  

\begin{thm}[\cite{me}]\label{booleanced}
The sequence $\Gamma_1, \Gamma_2, \ldots, \Gamma_t$ is a convex-ear decomposition of $\Delta((B_d)_S)$. 
\end{thm}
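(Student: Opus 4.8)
The plan is to verify the four conditions of Definition~\ref{ced} for the sequence $\Gamma_1,\Gamma_2,\dots,\Gamma_t$, discarding any $\Gamma_i$ that is empty. Write $s=|S|$, so that $\Delta((B_d)_S)$ is pure of dimension $s-1$. The first step is to pin down each $L_i$ combinatorially. Since $S$ and $[d-1]\setminus S$ partition $[d-1]$, the ``deselected'' ranks $\{0,d\}\cup([d-1]\setminus S)=\{p_0=0<p_1<\dots<p_k=d\}$, with $k=d-s$, cut every maximal chain of $B_d$ into $k$ blocks, and every rank strictly between two consecutive $p$'s is selected. Specifying $\bd_i$ amounts to specifying the elements $\hat0=X_0\subsetneq X_1\subsetneq\dots\subsetneq X_k=\hat1$ it occupies at the deselected ranks, and one checks directly that $L_i$ is then the ordinal sum of the open intervals $(X_0,X_1),(X_1,X_2),\dots,(X_{k-1},X_k)$ of $B_d$. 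Hence $\Delta(L_i)=\Delta((X_0,X_1))*\dots*\Delta((X_{k-1},X_k))$; each factor is the barycentric subdivision of the boundary of a simplex, hence the boundary complex of a simplicial polytope, and a join of boundary complexes of simplicial polytopes is again the boundary complex of a simplicial polytope (the boundary of their direct sum). So each $\Delta(L_i)$ is the boundary complex of a simplicial $s$-polytope --- in fact of the same combinatorial type for all $i$, since the block sizes depend only on $S$. In particular $\Gamma_1=\Delta(L_1)$ is such a boundary complex, and for $i>1$ the complex $\Gamma_i$ is a full-dimensional subcomplex of one; this establishes the bulk of condition~(ii).

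For condition~(i) it suffices to show that every maximal chain of $(B_d)_S$ is a chain in some $L_i$: choosing $i$ minimal with this property exhibits that chain as a facet of $\Gamma_i$, so $\bigcup_i\Gamma_i$ contains every facet of the pure complex $\Delta((B_d)_S)$, and the reverse inclusion is clear since $L_i$ is a subposet of $(B_d)_S$. Unwinding the description of $L_i$, this reduces to the following statement about $B_d$: given a maximal chain $m$ of $(B_d)_S$, there is a maximal chain $\bd$ of $B_d$ whose $\lambda$-label has descent set exactly $S$ and whose elements at the deselected ranks strictly interleave with $m$ (each element of $m$ lies strictly between the elements of $\bd$ at the two adjacent deselected ranks). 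The obvious attempt --- fill the gaps of $m$ with increasing-label segments, i.e.\ pass to $\Upsilon_\lambda(m)$ --- need not produce descent set exactly $S$, so a genuine argument is required; I would prove the claim by a lemma on $\mathcal{S}_d$-EL-labelings of $B_d$, building $\bd$ so that a descent is forced at each rank of $S$ while $\bd$ remains a refinement of a chain interleaving with $m$. Showing that the descent-set-$S$ maximal chains ``reach everywhere'' in $(B_d)_S$ is, I expect, one of the two principal obstacles.

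The other principal obstacle, which simultaneously yields conditions~(iii), (iv), and the remainder of~(ii), is to produce for each $i$ a shelling of the polytopal sphere $\Delta(L_i)$ in which the facets of $\Gamma_i$ --- the maximal chains of $L_i$ lying in no earlier $L_j$ --- form an initial segment. I would build this from the lexicographic order on the $\lambda$-labels of the canonical $B_d$-extensions of maximal chains of $L_i$ (each maximal chain of $L_i$ extends uniquely to a maximal chain of $B_d$ by reinstating $\bd_i$'s deselected elements), using the EL-property on each block together with the join structure of $\Delta(L_i)$; the substance is to show that a maximal chain of $L_i$ lies in an earlier $L_j$ precisely when its label admits a descent-set-preserving lexicographic decrease that is realizable after restriction to the ranks in $S$, so that the ``old'' facets can be pushed past the ``new'' ones. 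Granting this, for $i>1$ the complex $\Gamma_i$ is an initial segment of a shelling of a sphere, hence shellable, and it is a proper subcomplex of $\Delta(L_i)$ (the lex order on the $\bd_i$ forces $L_i$ to share a facet with an earlier $L_j$ once $i>1$), so Theorem~\ref{danarajklee} makes $\Gamma_i$ a topological ball; this gives~(iii) and completes~(ii).

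Finally, an initial segment of a shelling of a sphere meets the complementary facets exactly along its own boundary. It remains to identify $\Gamma_i\cap\bigl(\bigcup_{j<i}\Gamma_j\bigr)$ with the intersection of $\Gamma_i$ and the union of the ``old'' facets of $\Delta(L_i)$: every old facet of $\Delta(L_i)$ already lies in $\bigcup_{j<i}\Gamma_j$, and conversely a face of $\Gamma_i$ lying in some earlier $\Gamma_j$ lies in some earlier $L_j$ and hence, using the block structure, extends inside $L_i$ to an old maximal chain. Thus $\Gamma_i\cap\bigl(\bigcup_{j<i}\Gamma_j\bigr)=\partial\Gamma_i$, which is condition~(iv). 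Assembling (i)--(iv) completes the proof.
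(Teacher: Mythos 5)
Your structural setup is sound and matches the route taken in \cite{me} (which is where the paper sends the reader for this theorem, recording only Lemmas \ref{inc}--\ref{sigmashell} as the key tools): the identification of $\Delta(L_i)$ as a join of order complexes of the open intervals between consecutive deselected elements of $\bd_i$, hence a join of barycentric subdivisions of simplex boundaries, hence the boundary of a simplicial $|S|$-polytope, is correct and disposes of the polytopality part of condition (ii). But beyond that the proposal is a plan rather than a proof: the two steps you yourself label the ``principal obstacles'' are exactly where the content of the theorem lives, and they are left as declarations of intent. For condition (i) you never actually construct, for an arbitrary maximal chain $m$ of $(B_d)_S$ and an arbitrary $\mathcal{S}_d$-EL-labeling, a descent-set-$S$ chain of $B_d$ whose deselected elements form a chain with $m$ (you rightly note that $\Upsilon_\lambda(m)$ need not have descent set $S$, but then only promise ``a lemma''). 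For condition (iii) the criterion you announce for when a maximal chain of $L_i$ lies in an earlier $L_j$ is vague, and the shelling itself --- the substance of Lemma \ref{sigmashell}, via the (reverse) lexicographic order on the labels $\lambda(\Upsilon_\lambda(\be))$ together with the characterization in Lemma \ref{inc} --- is not supplied. Likewise the properness $\Gamma_i\subsetneq\Delta(L_i)$ for $i>1$, which you need both for (ii) and before invoking Theorem \ref{danarajklee}, is asserted only in a parenthesis; this is where an argument in the spirit of Lemma \ref{increasing} is required.

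One asserted step would fail as written. In your treatment of (iv) you claim that a face of $\Gamma_i$ lying in an earlier $\Gamma_j$ ``using the block structure, extends inside $L_i$ to an old maximal chain.'' The obvious block-structure argument --- complete the chain within $L_i\cap L_j$ --- breaks down, because the intersection of the two interval systems need not contain elements at every required rank: if $X^{(i)}_a$ and $X^{(j)}_a$ are the deselected elements of $\bd_i$ and $\bd_j$ at rank $p_a$ and $|X^{(i)}_a\cup X^{(j)}_a|=p_a+2$, then no rank-$(p_a+1)$ element lies strictly above both, so a chain of $L_i\cap L_j$ missing that rank cannot be completed inside $L_i\cap L_j$; one must instead produce the ``old'' completing facet via the labeling (in the analogous step of the proof of Theorem \ref{boolpieces} this is done by passing to $\Upsilon_\lambda$ of the chain and restricting to $S$), not via the block structure alone. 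So: right skeleton, but the coverage statement, the shelling lemma, properness, and the boundary identification --- i.e.\ the actual content of the cited lemmas from \cite{me} --- are missing or glossed.
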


The following lemmata, whose proofs we omit, are shown in \cite{me}.

\begin{lem}\label{inc}
Let $\be$ be a maximal chain of some $L_i$.  Then $\be$ is a facet of $\Gamma_i$ if and only if 
\[
(\Upsilon_\lambda(\be))_{[d-1]\setminus S} = (\bd_i)_{[d-1]\setminus S}.
\]
\end{lem}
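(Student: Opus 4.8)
The plan is to unwind the condition "$\be$ is a facet of $\Gamma_i$". Since $\be$ is assumed to be a maximal chain of $L_i$, it is a facet of $\Gamma_i$ precisely when it is not a chain in $L_j$ for any $j<i$; and because $\be$ already carries an element at every rank of $S$, being a chain in such an $L_j$ is the same as being a maximal chain of $L_j$. So the assertion is equivalent to: $i$ is the least index $k$ for which $\be$ is a maximal chain of $L_k$. I would begin by recording the description of these maximal chains furnished by the distributive structure of $B_d$: a maximal chain $\be$ of $(B_d)_S$ is a maximal chain of $L_k$ if and only if there is a maximal chain $\bc$ of $B_d$ with $\bc_S=\be$ and $\bc_{[d-1]\setminus S}=(\bd_k)_{[d-1]\setminus S}$ --- call such a $\bc$ a completion of $\be$ over $\bd_k$. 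The generators of $L_k$ are exactly the chains $\bc_S$ of this form, and the only thing to check is that $L_k$ has no further maximal chains; this holds because between consecutive ranks of $S\cup\{0,d\}$ every intermediate rank lies in $[d-1]\setminus S$, so the elements of $(\bd_k)_{[d-1]\setminus S}$ can be spliced into the gaps of $\be$ in at most one way, and can be so spliced whenever each lies in the appropriate interval.

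Next I would bring in $\Upsilon_\lambda$. Put $\bc^{*}=\Upsilon_\lambda(\be)$. Since $\bc^{*}$ fills each gap of $\be$ with its unique $\lambda$-increasing chain, applying the EL-property inside each of those intervals shows that $\bc^{*}$ has the lexicographically smallest $\lambda$-label among all completions of $\be$. I would then build from $\bc^{*}$ a maximal chain with descent set exactly $S$ by re-choosing, within each maximal block of consecutive ranks in $S$, the elements of $\bc^{*}$ so that the $\lambda$-labels strictly decrease across that block; this changes $\bc^{*}$ only at ranks in $S$, hence leaves $\bc^{*}_{[d-1]\setminus S}$ intact. A routine check --- using that the flanking complement ranks received their labels in increasing order --- confirms that the resulting chain has descent set precisely $S$, so it equals $\bd_k$ for some $k$. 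Moreover this $\bd_k$ is the unique maximal chain with descent set $S$ whose restriction to $[d-1]\setminus S$ equals $\bc^{*}_{[d-1]\setminus S}$, since inside each such block the requirement that every position be a descent forces the labels there to be the (already determined) available values in decreasing order.

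It remains to identify $k$ with $i(\be)$, the least index for which $\be$ is a maximal chain of $L_k$. On one side, $\bc^{*}$ is a completion of $\be$ over $\bd_k$, so $\be$ is a maximal chain of $L_k$ and $i(\be)\le k$. On the other side, suppose $\be$ is a maximal chain of $L_{k'}$ and fix a completion $\bc'$ of $\be$ over $\bd_{k'}$; by the uniqueness just noted, $\bd_{k'}$ arises from $\bc'$ by re-sorting the labels in each $S$-block into decreasing order (the set of available labels there being determined by $\bc'_{[d-1]\setminus S}=(\bd_{k'})_{[d-1]\setminus S}$). Since $\bc^{*}$ has lexicographically least label, $\lambda(\bc^{*})$ precedes or equals $\lambda(\bc')$. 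Let $\rho$ be the first rank at which $\bc^{*}$ and $\bc'$ disagree; it necessarily lies in $[d-1]\setminus S$, as both chains restrict to $\be$ along $S$. Tracking $\rho$ through the re-sort --- the labels it affects are those inside the maximal $S$-block lying just below $\rho$, if there is one, together with the label at $\rho$ itself --- one checks that the first disagreement between $\lambda(\bd_k)$ and $\lambda(\bd_{k'})$ again favors $\bd_k$, so $\lambda(\bd_k)$ precedes or equals $\lambda(\bd_{k'})$ and $k\le k'$. Hence $k=i(\be)$, and $\be$ is a facet of $\Gamma_i$ if and only if $i=k$, i.e.\ if and only if $(\bd_i)_{[d-1]\setminus S}=\bc^{*}_{[d-1]\setminus S}=(\Upsilon_\lambda(\be))_{[d-1]\setminus S}$.

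The hardest step is this last lexicographic comparison: because $\bd_k$ differs from the completion $\bc^{*}$ exactly at the ranks of $S$ --- the very ranks on which $\bc^{*}$ and $\bc'$ agree --- one must follow carefully how the first complement-rank discrepancy of the two completions is transmitted, through one round of re-sorting an $S$-block, to the first discrepancy of $\bd_k$ and $\bd_{k'}$, treating separately the case where the rank just below $\rho$ lies in $S$ and the case where it does not. A subsidiary technical point is the verification in the second paragraph that the re-sorted chain has descent set exactly $S$ and not a proper subset of it; that rests on the observation that the smallest label available inside any $S$-block does not exceed the first (hence smallest) label of the increasing fill immediately above it.
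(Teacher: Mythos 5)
The paper itself gives no proof of Lemma \ref{inc} --- it is one of the ``lemmata, whose proofs we omit,'' cited from \cite{me} --- so there is nothing in this document to compare your argument against line by line. Judged on its own terms, your proof is essentially correct: reducing ``facet of $\Gamma_i$'' to ``$i$ is the least index $k$ with $\be$ a maximal chain of $L_k$,'' observing that $\Upsilon_\lambda(\be)$ is the lexicographically least completion of $\be$, characterizing $\bd_k$ as the unique descent-set-$S$ chain obtained from a completion by sorting each $S$-block decreasingly, and then transferring the lexicographic comparison of completions to a comparison of the corresponding $\bd$'s does prove the statement. I checked the two steps you flag as delicate: the junction check (ascents at the ranks of $[d-1]\setminus S$) goes through exactly as you indicate, because $\min$ of an $S$-block's label set is at most the label $\lambda(x_{p-1},x_p)$ of $\Upsilon_\lambda(\be)$, which is less than the next fill label; and in the block-tracking step one has $C^\ast=L\sqcup\{\alpha\}$, $C'=L\sqcup\{\beta\}$ with $\alpha<\beta$, so the decreasing sorts first disagree at the position where $C'$ shows $\beta$ and $C^\ast$ shows something smaller.

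Two places deserve explicit justification rather than assertion. First, your identification of the maximal chains of $L_k$ with restrictions of completions: ``can be so spliced whenever each lies in the appropriate interval'' leaves unproved that the elements of $(\bd_k)_{[d-1]\setminus S}$ \emph{do} lie in the appropriate intervals; this follows because every element of $\be$, being in $L_k$, lies on some generating chain and is therefore comparable to every element of $(\bd_k)_{[d-1]\setminus S}$, so $\be\cup(\bd_k)_{[d-1]\setminus S}$ is itself a chain. Second, you repeatedly use facts about an arbitrary $\mathcal{S}_d$-EL-labeling of $B_d$ that are true but not automatic: every interval has a well-defined label set, distinct covers of a common element carry distinct labels (so chains are determined by their label words, and in particular $\lambda_\rho(\bc^\ast)\neq\lambda_\rho(\bc')$), and consequently every ordering of an interval's label set --- in particular the decreasing one --- is realized by a chain. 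These follow from the permutation property plus the lex-first clause of the EL condition, but since $\lambda$ is not assumed to be the standard labeling of $B_d$, they should be stated and proved, not treated as given.
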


\begin{lem}\label{increasing}
Let $\bc$ be the unique maximal chain of $B_d$ with increasing $\lambda$-label.  Then 
\[
\Upsilon_\lambda((\bd_1)_{[d-1]\setminus S}) = \bc.
\]
\end{lem}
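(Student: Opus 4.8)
The plan is to reduce the lemma to the single assertion that $\bd_1$ and the increasing chain $\bc$ pass through the same element at every rank outside $S$, that is, that $(\bd_1)_{[d-1]\setminus S} = \bc_{[d-1]\setminus S}$. Once this is known the lemma is immediate: $\bc$ refines $\bc_{[d-1]\setminus S}$, and since $\lambda(\bc)$ is increasing (so, being an $\mathcal{S}_d$-label, equal to the identity), $\bc$ restricts on each interval between two consecutive ranks of $([d-1]\setminus S)\cup\{0,d\}$ to the unique increasingly labeled chain there; hence $\bc$ is exactly the chain $\Upsilon_\lambda$ produces from $\bc_{[d-1]\setminus S}$, and therefore $\Upsilon_\lambda((\bd_1)_{[d-1]\setminus S}) = \Upsilon_\lambda(\bc_{[d-1]\setminus S}) = \bc$.

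To establish $(\bd_1)_{[d-1]\setminus S} = \bc_{[d-1]\setminus S}$ I would exhibit an explicit maximal chain $\bd^\ast$ with this restriction and then prove $\bd_1 = \bd^\ast$. Write $S$ as the disjoint union of its maximal blocks of consecutive integers $\{a_j,\dots,b_j\}$, write $\bc$ as $x_0 \lessdot x_1 \lessdot \dots \lessdot x_d$, and for each $j$ set $I_j = [x_{a_j-1},\,x_{b_j+1}]$. Each $I_j$ is a Boolean interval with nonzero M\"obius function, so the EL-labeling $\lambda|_{I_j}$ admits a (unique) maximal chain with strictly decreasing label. Let $\bd^\ast$ be the maximal chain of $B_d$ that follows $\bc$ at every rank not strictly inside some $I_j$ and that runs along this strictly decreasing chain inside each $I_j$; the intervals $I_j$ have disjoint interiors, so $\bd^\ast$ is well defined, and by construction it coincides with $\bc$ at every rank not in $S$.

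The crux is to compute $\lambda(\bd^\ast)$. At a position $p$ with both $p-1$ and $p$ outside $S$ the $p$-th edge of $\bd^\ast$ is an edge of $\bc$, so the $p$-th label of $\bd^\ast$ equals that of $\bc$, namely $p$; these are exactly the positions lying outside all of the blocks $\{a_j,\dots,b_j+1\}$. For a fixed $j$, consider instead the maximal chain gotten from $\bc$ by splicing in the strictly decreasing chain of $I_j$: it agrees with $\bc$ (hence carries the identity label) at every position outside $\{a_j,\dots,b_j+1\}$, so because $\lambda$ is an $\mathcal{S}_d$-labeling its label on the positions $\{a_j,\dots,b_j+1\}$ must use precisely the values $\{a_j,\dots,b_j+1\}$, and being strictly decreasing it is $(b_j+1)\,b_j\cdots a_j$; the same is therefore true of $\bd^\ast$ on those positions. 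Consequently $\lambda(\bd^\ast)$ is the permutation obtained from the identity by reversing each block $\{a_j,\dots,b_j+1\}$. A direct check shows this permutation has descent set exactly $S$, and an easy greedy argument shows it is the lexicographically smallest element of $D_S^d$. Since each $\bd_i$ has a label in $D_S^d$, and $\bd^\ast$ is itself one of the $\bd_i$, it has the lexicographically smallest label among them; as an $\mathcal{S}_d$-EL-labeling puts the maximal chains of $B_d$ in bijection with $\mathcal{S}_d$, this gives $\bd_1 = \bd^\ast$, and since $(\bd^\ast)_{[d-1]\setminus S} = \bc_{[d-1]\setminus S}$ the reduction, hence the lemma, follows.

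I expect the main obstacle to be exactly the label computation in the previous paragraph: the observation that the $\mathcal{S}_d$-hypothesis forces the strictly decreasing chain of each $I_j$ to carry the label set $\{a_j,\dots,b_j+1\}$. This is the only place where one uses that $\lambda$ is an $\mathcal{S}_d$-EL-labeling rather than merely an EL-labeling, and it is essential: for a general EL-labeling the decreasing chain of $I_j$ could carry some other decreasing word, and then $\bd^\ast$ need not have lexicographically smallest label. The remaining ingredients---existence of a strictly decreasing maximal chain in each Boolean interval (an EL-shellable poset has exactly $|\mu(\hat{0},\hat{1})|$ of them), the description of the lexicographically least permutation with a prescribed descent set, and the bijection between maximal chains of $B_d$ and $\mathcal{S}_d$---are standard.
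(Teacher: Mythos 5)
The paper itself gives no proof of Lemma \ref{increasing}: it is quoted from \cite{me} with the proof omitted, so a direct comparison of approaches is not possible. Judged on its own terms, your argument is correct in outline and its central computation is right: the chain $\bd^\ast$ obtained from the increasing chain $\bc$ by splicing in the unique decreasing chain of each interval $I_j$ does carry the label gotten from the identity by reversing the block positions $\{a_j,\dots,b_j+1\}$ (your spliced-chain/content argument for this is exactly where the $\mathcal{S}_d$-hypothesis enters, as you note), that label has descent set exactly $S$ and is lexicographically (indeed weak-order) least in $D_S^d$, and once $\bd_1=\bd^\ast$ is known the lemma follows as in your first paragraph.

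The one genuine gap is the step you wave through as standard: that an \emph{arbitrary} $\mathcal{S}_d$-EL-labeling of $B_d$ puts maximal chains in bijection with $\mathcal{S}_d$, which is what you need to pass from $\lambda(\bd_1)=\lambda(\bd^\ast)$ to $\bd_1=\bd^\ast$. This is not a formal consequence of the definitions, and it is false for $\mathcal{S}_n$-type EL-labelings of posets that merely have the right chain count locally --- e.g.\ in the supersolvable lattice $\Pi_3$ two of the three maximal chains receive the label $21$ --- so any proof must genuinely use Booleanness, and the lemma is stated for an arbitrary $\lambda$, not the standard labeling. The claim is true and the hole is fillable: first check that for each $x\in B_d$ all maximal chains of $[\hat 0,x]$ carry the same \emph{set} of labels (otherwise splicing the lower part of one chain with the upper part of another through $x$ gives a maximal chain whose label is not a permutation); then, if $r\notin S$ and $x$ is the rank-$r$ element of any chain whose label is your block-reversal permutation, that common label set is $\{1,\dots,r\}$, so concatenating the increasing chains of $[\hat 0,x]$ and $[x,\hat 1]$ produces a maximal chain labeled $12\cdots d$, which by uniqueness of the increasing chain is $\bc$; hence $x$ lies on $\bc$. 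This identifies $\bd_1$ with $\bc$ at every rank outside $S$ directly (bypassing the bijection altogether) and completes your reduction.
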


\begin{lem}\label{sigmashell}
Let $\be_1, \be_2, \ldots, \be_m$ be all maximal chains corresponding to facets of $\Gamma_i$, ordered so that $\lambda(\Upsilon_{\lambda}(\be_k))$ lexicographically precedes $\lambda(\Upsilon_{\lambda}(\be_j))$ whenever $j < k$.  Then for all $j$ and $k$ with $j < k$, there exists a $j' < k$ satisfying
\[
\be_j \cap \be_k \subseteq \be_{j'} \cap \be_k = \be_{j'} - x
\]
for some element $x$ of $\be_{j'}$.  
\end{lem}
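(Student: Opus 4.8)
The statement is exactly the assertion that the listed ordering $\be_1,\be_2,\ldots,\be_m$ is a shelling of $\Gamma_i$ in the sense of Definition \ref{altshell}, so the plan is to verify that condition directly, by the usual Bj\"orner--Wachs exchange argument for EL-labelings, carried out one level up in $B_d$ rather than in $(B_d)_S$ and transported across by the straightening operator $\Upsilon_\lambda$.

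First I would record a reduction. For a facet $\be$ of $\Gamma_i$ put $w(\be)=\lambda(\Upsilon_\lambda(\be))$; the ordering is unambiguous since $\be\mapsto\Upsilon_\lambda(\be)$ is injective (as $(\Upsilon_\lambda(\be))_S=\be$) and $\lambda$ is injective on maximal chains of $B_d$. By Lemma \ref{inc}, every facet of $\Gamma_i$ has support $S\cup\{0,d\}$, and under $\be\leftrightarrow\Upsilon_\lambda(\be)$ the facets of $\Gamma_i$ correspond to the maximal chains of $B_d$ that agree with $\bd_i$ at every rank of $[d-1]\setminus S$ and whose $\lambda$-label increases along each interval between consecutive ranks of $S\cup\{0,d\}$. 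Now fix $j<k$, so $w(\be_j)>_{\mathrm{lex}}w(\be_k)$; I claim it suffices to produce a rank $p\in S$ at which $\be_j$ and $\be_k$ have different elements together with an element $v$ distinct from the rank-$p$ element of $\be_k$ such that the chain $\be_{j'}$ obtained from $\be_k$ by substituting $v$ at rank $p$ is again a facet of $\Gamma_i$ with $w(\be_{j'})>_{\mathrm{lex}}w(\be_k)$. Indeed, $\be_{j'}$ then precedes $\be_k$ in the order, so $j'<k$; the chains $\be_{j'}$ and $\be_k$ differ only at rank $p$, so $\be_{j'}\cap\be_k=\be_{j'}-x$ with $x$ the rank-$p$ element of $\be_{j'}$; and since $\be_j$ and $\be_k$ disagree at rank $p$, the face $\be_j\cap\be_k$ has no rank-$p$ element and so lies in $\be_{j'}\cap\be_k$ --- exactly the requirement of Definition \ref{altshell}.

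To build $p$ and $v$ I would follow the standard EL recipe. Let $p$ be the least rank at which $\Upsilon_\lambda(\be_j)$ and $\Upsilon_\lambda(\be_k)$ diverge; since these chains coincide at all ranks of $[d-1]\setminus S$ we get $p\in S$, and since their labels first differ at $p$ with $\Upsilon_\lambda(\be_j)$'s the larger, the rank-$p$ label of $\Upsilon_\lambda(\be_k)$ is not as large as possible. Writing $p^-,p^+$ for the neighbours of $p$ in $S\cup\{0,d\}$ and $u$ for the rank-$p^-$ element common to $\be_j$ and $\be_k$, take $v$ to be a rank-$p$ element of the Boolean interval of $B_d$ between $u$ and the rank-$p^+$ element of $\be_k$ such that (a) the increasing saturated chains from $u$ to $v$ and from $v$ up to that rank-$p^+$ element both pass through the prescribed $\bd_i$-elements at the ranks of $[d-1]\setminus S$ inside $(p^-,p^+)$, and (b) the sorted list of $v\setminus u$ strictly lexicographically exceeds the sorted list for the current rank-$p$ element of $\be_k$. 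With such a $v$, the chains $\Upsilon_\lambda(\be_{j'})$ and $\Upsilon_\lambda(\be_k)$ agree at all ranks $\le p^-$ and at all ranks $\ge p^+$, the first position at which their labels differ lies in $(p^-,p]$ and is an increase, so $w(\be_{j'})>_{\mathrm{lex}}w(\be_k)$; and (a) together with Lemma \ref{inc} makes $\be_{j'}$ a facet of $\Gamma_i$.

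The step I expect to be the main obstacle is showing that a suitable $v$ always exists --- that a label-increasing flip of $\be_k$ can be made while still threading the fixed $\bd_i$-elements at ranks outside $S$. When the divergence rank $p$ is a descent of $\Upsilon_\lambda(\be_k)$ a naive bump at $p$ can break compatibility with those fixed elements, forcing one to pass instead to the first later rank of $S$ that admits a compatible label-increasing flip; proving such a rank exists, and analysing precisely how the increasing fill-in of a Boolean interval of $B_d$ changes when an endpoint is replaced, is the technical core, and is where one uses that $\lambda$ is an $\mathcal S_d$-EL-labeling, so that straightened chains are determined by their labels and the fill-in combinatorics is just sorting. (The base case $k=1$ is vacuous; by Lemma \ref{increasing} the least facet-label in $\Gamma_1$ is that of the globally increasing chain, which sits last in the decreasing-lex order used in the statement, confirming that this is the direction in which the flips should run.)
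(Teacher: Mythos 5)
Your proposal is a plan rather than a proof, and the plan stops exactly at the point where the lemma's content lies. (For calibration: the paper itself does not prove this lemma --- it is imported from \cite{me} with the proof omitted --- so there is no in-paper argument to compare against; the proposal has to stand on its own.) Your reduction is fine as far as it goes: by Lemma \ref{inc} the facets of $\Gamma_i$ correspond under $\Upsilon_\lambda$ to maximal chains of $B_d$ threading $\bd_i$ at the ranks of $[d-1]\setminus S$, and it does suffice to find a rank $p\in S$ at which $\be_j$ and $\be_k$ differ and a substitute element $v$ at rank $p$ so that the substituted chain is again a facet of $\Gamma_i$ with lexicographically larger straightened label. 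But you then write that showing ``a suitable $v$ always exists'' is ``the technical core'' and leave it unargued. That existence claim is the entire difficulty: one must show that whenever $\lambda(\Upsilon_\lambda(\be_j))$ lexicographically exceeds $\lambda(\Upsilon_\lambda(\be_k))$, there is a single-rank flip of $\be_k$ at a rank of $S$ where $\be_j$ and $\be_k$ disagree which both increases the label and remains compatible with the fixed $\bd_i$-elements in the adjacent gaps. Nothing in the reduction guarantees this, and it is precisely where one must exploit the structure of an $\mathcal{S}_d$-EL-labeling of $B_d$ (straightened labels are permutations, fill-ins are sortings, etc.).

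Moreover, the fallback you sketch for the hard case --- when the first divergence rank $p$ does not admit a compatible flip, ``pass to the first later rank of $S$ that admits a compatible label-increasing flip'' --- is in tension with your own reduction. The containment $\be_j\cap\be_k\subseteq\be_{j'}\cap\be_k$ requires the flip to occur at a rank where $\be_j$ and $\be_k$ already disagree; a later rank of $S$ need not have that property, and if $\be_j$ and $\be_k$ agree there, the chain $\be_{j'}$ you produce satisfies $\be_{j'}\cap\be_k=\be_{j'}-x$ but fails $\be_j\cap\be_k\subseteq\be_{j'}\cap\be_k$, so Definition \ref{altshell} is not verified. So as written the argument has a genuine gap: the key existence statement is unproved, and the proposed escape route for the problematic case can produce a $j'$ that does not satisfy the lemma's conclusion. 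To complete the proof you would need to show that the flip can always be made at a divergence rank (or restructure the induction so that only such flips are ever needed), which is the substance of the omitted argument in \cite{me}.
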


Lemma \ref{sigmashell}, together with Theorem \ref{danarajklee} and Definition \ref{altshell}, proves that $\Gamma_i$ is a topological ball for $i> 1$.  We are now ready to prove our main theorem.  

\begin{proof}[Proof of Theorem \ref{boolpieces}]
The basic idea is to iterate the decomposition provided by Theorem \ref{booleanced}.  Indeed, Theorem \ref{booleanced} gives us a c.e.d. of $\Delta((P_1)_S)$.  Now suppose we have a c.e.d. for $X =\bigcup_{i = 1}^{q-1} \Delta((P_i)_S)$ for some $q$ with $2 \leq q \leq r$.  We show that we can extend this to a c.e.d. of $X \cup \Delta((P_q)_S)$.  For ease of notation, let $\lambda = \lambda_q$ and $\Upsilon = \Upsilon_\lambda$.  

Taking our cue from the decomposition of $(B_d)_S$ described above, let $\bd_1, \bd_2, \ldots, \bd_t$ be all maximal chains of $P_q$ whose $\lambda$-labels have descent set $S$, and let the above order be such that $\lambda(\bd_i)$ lexicographically precedes $\lambda(\bd_j)$ for $i < j$.  For each $i$, define $L_i$ and $\Gamma_i$ as in Theorem \ref{booleanced}.  Finally, let $\Sigma_i$ be the simplicial complex whose facets are maximal chains in $\Gamma_i$ that are not chains in $X$.  We claim that the sequence $\Sigma_1, \Sigma_2, \ldots, \Sigma_t$ (once we remove all $\Sigma_i = \emptyset$) extends the c.e.d. of $X$.  We prove each property of Definition \ref{ced} separately.  

By definition, each $\Gamma_i \subseteq \Sigma_i \cup X$.  Since $\bigcup_{i = 1}^t \Gamma_i = \Delta((P_q)_s)$ (by Theorem \ref{booleanced}), $X \cup (\bigcup_{i = 1}^t \Sigma_i ) = X \cup \Delta((P_q)_S)$, so property (\emph{i}) holds.  

Property (\emph{ii}) is easily verified as well.  Since each $\Gamma_i$ for $i > 1$ is a proper subcomplex of a simplicial polytope boundary, so is each $\Sigma_i \subseteq \Gamma_i$.  However, as $\Gamma_1$ \emph{is} a simplicial polytope boundary, we need to show that the inclusion $\Sigma_1 \subseteq \Gamma_1$ is proper.  This follows from Lemma \ref{increasing}, which says that $\bc_S$ is a facet of $\Gamma_1$, where $\bc$ is the unique maximal chain in $P_q$ with increasing $\lambda$-label.  Because $\bc = \Upsilon(\hat{0} < \hat{1})$ and $\hat{0} < \hat{1}$ is a chain in all $P_i$, it follows that $\bc_S$ is not a facet of $\Sigma_1$.

Now fix $i \geq 1$.  To prove property (\emph{iii}), we employ the techniques (and notation) of Lemma \ref{sigmashell}.  Let $\be_1, \be_2, \ldots, \be_m$ be all maximal chains of $\Sigma_i$, ordered so that $\lambda(\Upsilon(\be_k))$ precedes $\lambda(\Upsilon(\be_j))$ whenever $j < k$.  Now choose some $j$ and $k$ with $j < k$.  Because $\Sigma_i \subseteq \Gamma_i$, Lemma \ref{sigmashell} produces a maximal chain $\be$ in $\Gamma_i$ satisfying $\be_j \cap \be_k \subseteq \be \cap \be_k = \be - x$ for some element $x$ of $\be$, with $\lambda(\Upsilon(\be_k))$ lexicographically preceding $\lambda(\Upsilon(\be))$.  To finish the proof, we just need to show that $\be$ is a facet of $\Sigma_i$.  That is, we need to show that $\be$ is not a chain in $X$. 
  
By Lemma \ref{inc}, $\be \cap \be_k = \be - x$ implies that $\Upsilon(\be) \cap \Upsilon(\be_k) = \Upsilon(\be) - x$.  Because $P_q$ is a Boolean lattice, it has exactly two maximal chains containing $\Upsilon(\be) - x$ as a subchain.  Hence, these chains must be $\Upsilon(\be)$ and $\Upsilon(\be_k)$.  Because $\lambda(\Upsilon(\be_k))$ precedes $\lambda(\Upsilon(\be))$, we have $\Upsilon(\be_k) = \Upsilon(\Upsilon(\be) - x)$.  If $\be$ were a chain in $X$, then $\Upsilon(\be) - x$ would be as well.  But then, since $\be_k$ is a subchain of $\Upsilon(\Upsilon(\be) - x)$, we would have that $\be_k$ is in $X$, a contradiction.  

For property (\emph{iv}) consider some $i$, and note that a chain $\be$ is in $\partial \Sigma_i$ if and only if there exist two maximal chains $\be_\old$ and $\be_\new$, each containing $\be$ as a subchain, such that $\be_\old$ is a chain in $X \cup (\bigcup_{j = 1}^{i-1} \Sigma_j)$ and $\be_\new$ is a chain in $\Sigma_i$.  Thus, 
\[
\partial\Sigma_i \subseteq   \Sigma_i \cap (X \cup (\bigcup_{j = 1}^{i-1} \Sigma_j ) ).
\]  
To prove the reverse inclusion, let $\be$ be a non-maximal chain in $\Sigma_i \cap (X \cup (\bigcup_{j = 1}^{i-1} \Sigma_j ) )$.  Then by definition $\be$ must be a subchain of some maximal chain in $\Sigma_i$, and we can take this chain to be $\be_\new$.  To find $\be_\old$, we consider two cases.  First, if $\be$ is a chain of $\Sigma_j$ for some $j < i$, then by Theorem \ref{booleanced} there must be some maximal chain $\be_\old$ of $\Sigma_j$ for some $j < i$.  Second, if $\be$ is a chain in $X$, then $\Upsilon(\be)$ must be in $X$ as well.  Setting $\be_\old = (\Upsilon(\be))_S$ completes the proof of property (\emph{iv}).

Thus, we can extend the c.e.d. of $X$ to one of $X \cup \Delta((P_q)_S)$.  Continuing in this fashion, we get a c.e.d. of 
\[
\bigcup_{i=1}^r \Delta((P_i)_S).
\] 
By hypothesis every chain in $P$ is a chain in some $P_i$ and the above union equals $\Delta(P_S)$, proving the theorem. 
\end{proof}

\ssection{Rank-selected geometric lattices}\label{geolattice}

We first apply Theorem \ref{boolpieces} to geometric lattices.  We assume a basic familiarity with matroid theory, including the cryptomorphism between matroids and geometric lattices.  For background, see \cite{bj} or \cite{oxley}. 

Let $P$ be a rank-$d$ geometric lattice.  In \cite{ns}, Nyman and Swartz show that $\Delta(P)$ admits a convex-ear decomposition.  We open this section by briefly describing their technique.

Let $a_1, a_2, \ldots, a_\ell$ be a fixed linear ordering of the atoms of $P$.  The \emph{minimal labeling} $\nu$ of $P$ is defined as follows: if $y$ covers $x$, then $\nu(x,y)=\min\{ i : x \vee a_i=y\}$.  We view $P$ as the lattice of flats of a simple matroid $M$.    

\begin{lem}[\cite{bj}]
The minimal labeling $\nu$ is an EL-labeling.   
\end{lem}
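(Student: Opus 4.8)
The plan is to verify the two defining properties of an EL-labeling directly for the minimal labeling $\nu$, working interval by interval. Fix an interval $[x,y]$ in $P$; since $P$ is a geometric lattice, so is $[x,y]$, and it corresponds to a simple matroid (a contraction/restriction of $M$) whose atoms inherit the fixed linear order $a_1 < a_2 < \cdots < a_\ell$ from $P$ (more precisely, each atom of $[x,y]$ is $x \vee a_i$ for various $i$, and we order them by the least such $i$). So it suffices to treat the case $[x,y]=[\hat{0},\hat{1}]=P$ itself, where $\hat{1}$ is the top flat and we want to show there is a unique saturated chain from $\hat{0}$ to $\hat{1}$ with strictly increasing $\nu$-label, and that this chain's label is lexicographically first.

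First I would construct a distinguished saturated chain greedily. Starting at $\hat{0}$, repeatedly adjoin the atom of smallest index that strictly increases the rank: having built $x_0 = \hat{0} < x_1 < \cdots < x_k$, set $x_{k+1} = x_k \vee a_{i_{k+1}}$ where $i_{k+1} = \min\{ i : a_i \not\leq x_k\}$. By the definition of $\nu$ and the submodularity (semimodularity) of the rank function, $\nu(x_k, x_{k+1}) = i_{k+1}$, and because at each step we have used up more atoms below the current flat, the indices $i_1 < i_2 < \cdots < i_d$ strictly increase; so this greedy chain has strictly increasing label. The key step is to show this is the \emph{only} saturated chain with increasing label. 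Suppose $\hat{0} = z_0 < z_1 < \cdots < z_d = \hat{1}$ is saturated with $\nu(z_0,z_1) < \nu(z_1,z_2) < \cdots$; write $j_k = \nu(z_{k-1}, z_k)$. An induction on $k$ shows $z_k = x_k$: if $z_{k-1} = x_{k-1}$ and $j_k > i_k$, then $a_{i_k} \leq z_{k-1} = x_{k-1}$ would force (by strict increase of the $j$'s and the fact that $a_{i_k} \not\leq x_{k-1}$) a contradiction, so $j_k = i_k$ and hence $z_k = z_{k-1} \vee a_{j_k} = x_{k-1} \vee a_{i_k} = x_k$. The main obstacle here is pinning down exactly why $j_k$ cannot exceed $i_k$: this is where I expect to invoke an exchange-type argument, namely that if $a_{i_k} \not\leq z_{k-1}$ then $a_{i_k}$ must be "added" somewhere later along the chain, but a later cover has label $> j_k \geq i_k$, so the labels cannot remain distinct and increasing unless $a_{i_k}$ was already responsible for the step at stage $k$ — this is essentially the standard argument that the lexicographically least word of a saturated chain in a geometric lattice is realized by the greedy chain.

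For the lexicographic-minimality condition, I would argue that any saturated chain $\hat{0} = w_0 < w_1 < \cdots < w_d = \hat{1}$ has $\nu(w_0, w_1) \geq i_1$, since $\nu(w_0,w_1) = \min\{i : a_i \leq w_1\}$ and $a_i \leq w_1$ with $i < i_1$ is impossible because $i_1 = \min\{i : a_i \not\leq \hat 0\} = 1$ in the simple case (and in a general interval $i_1$ is the least atom index appearing at all); more carefully, comparing any chain to the greedy one position by position, the first place they differ the greedy chain has the strictly smaller label by the minimality built into its construction, using semimodularity to guarantee the greedy choice is always available. Combining: the greedy chain is the unique increasing-label chain and is lexicographically first, so $\nu$ is an EL-labeling of every interval, hence of $P$. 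I would also remark that this is precisely the content of the cited result of Björner, so the proof can be kept brief, with the semimodular exchange property doing all the real work and being the one point that merits spelling out.
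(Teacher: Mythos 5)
The paper never proves this lemma itself: it is quoted from Bj\"orner (the citation \cite{bj}), so there is no internal proof to compare against. Your greedy-chain-plus-exchange argument is exactly the standard proof of that cited result, and its skeleton is sound: the greedy chain has strictly increasing label with $\nu(x_{k-1},x_k)=i_k$, and uniqueness and lexicographic minimality both come from the same minimality/exchange mechanism. Two places deserve tightening. First, the reduction ``it suffices to treat $[\hat0,\hat1]$'' silently assumes that $\nu$ restricted to an interval $[x,y]$ agrees (up to relabeling) with the minimal labeling of $[x,y]$ built from its own atoms $x\vee a_i$; this is true but needs a short check (if $a_i\le x\vee a_{i'}\le v$ with $a_i\le u$ then $x\vee a_i\le u$, etc.), and it is cleaner to skip the reduction and run your argument directly in $[x,y]$, replacing ``atoms not below the current flat'' by ``atoms below $y$ and not below the current flat.'' Second, the uniqueness step is stated loosely (the sentence beginning ``if $z_{k-1}=x_{k-1}$ and $j_k>i_k$'' does not parse as written); the clean version of the idea you gesture at is: every label $j_m$ with $m\ge k$ satisfies $j_m\ge i_k$ (any atom realizing that cover is not below $z_{m-1}\ge x_{k-1}$), while the first cover $z_{m-1}<z_m$ with $a_{i_k}\le z_m$ satisfies $z_m=z_{m-1}\vee a_{i_k}$ and hence $j_m\le i_k$; strict increase of the $j$'s then forces $m=k$, $j_k=i_k$, and $z_k=x_k$. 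With those two points spelled out, your proof is a complete and correct substitute for the omitted (cited) one.
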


\begin{lem}[\cite{bj}]\label{nbc}
Suppose the $\nu$-label of a maximal chain $\bc$ of $P$ is a word in some subset $B\subseteq \{a_1, a_2, \ldots, a_\ell\}$.  Then $B$ is an nbc-basis of $M$.  
\end{lem}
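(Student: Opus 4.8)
\emph{Proof proposal.} The plan is to trace how the minimal labeling $\nu$ constructs the chain $\bc$ one atom at a time, deduce from this that the set $B$ of letters appearing in the label is a basis of $M$, and then exploit the minimality built into the definition of $\nu$ to rule out broken circuits in $B$.

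Write $\bc: \hat 0 = x_0 \lessdot x_1 \lessdot \cdots \lessdot x_d = \hat 1$, and for each $k \in [d]$ set $b_k = a_{i_k}$ where $i_k = \nu(x_{k-1}, x_k)$; by definition of $\nu$, $i_k$ is the least index with $x_{k-1} \vee a_{i_k} = x_k$, so in particular $x_{k-1} \vee b_k = x_k$. The first step is the structural identity $x_k = b_1 \vee b_2 \vee \cdots \vee b_k$ for all $k$, which is immediate by induction from $x_0 = \hat 0$ and $x_k = x_{k-1} \vee b_k$. Since $\bc$ is a maximal chain of the ranked lattice $P$, we have $\mathrm{rk}(x_k) = k$, so the rank-$k$ flat $x_k$ equals $\mathrm{cl}\{b_1, \ldots, b_k\}$; hence $\{b_1, \ldots, b_k\}$ has exactly $k$ elements and is independent in $M$. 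In particular the $b_k$ are pairwise distinct, the letter set of the $\nu$-label of $\bc$ is $B = \{b_1, \ldots, b_d\}$, and $B$ is a basis of $M$.

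It remains to show $B$ contains no broken circuit. Suppose instead that $C$ is a circuit of $M$ with $c := \min C$ satisfying $C \setminus \{c\} \subseteq B$; then $c \notin B$ (else $B \supseteq C$ would be dependent) and $c < b$ for every $b \in C \setminus \{c\}$. Let $k$ be the largest index with $b_k \in C \setminus \{c\}$, so that $C \setminus \{c, b_k\} \subseteq \{b_1, \ldots, b_{k-1}\}$ (each of its elements lies in $B$, is distinct from $b_k$, and has chain-index at most $k$). I would then check two things. First, $c \notin x_{k-1}$: otherwise $C \setminus \{b_k\} = \{c\} \cup (C \setminus \{c, b_k\}) \subseteq x_{k-1}$, and since $C$ is a circuit, $b_k \in \mathrm{cl}(C \setminus \{b_k\}) \subseteq x_{k-1}$, contradicting $x_{k-1} \lessdot x_{k-1} \vee b_k = x_k$. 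Second, $x_{k-1} \vee c = x_k$: since $C$ is a circuit, $c \in \mathrm{cl}(C \setminus \{c\}) \subseteq \mathrm{cl}\{b_1, \ldots, b_k\} = x_k$, so $x_{k-1} < x_{k-1} \vee c \le x_k$, which forces equality because $x_{k-1} \lessdot x_k$. But then $c$ is an atom strictly below $b_k$ in the fixed atom ordering with $x_{k-1} \vee c = x_k$, contradicting that $\nu(x_{k-1}, x_k)$ is the index of $b_k$. Hence $B$ is an nbc-basis.

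The induction and the closure/rank bookkeeping in the second paragraph are routine. The one place that requires genuine care is choosing $k$ to be the largest chain-index occurring in $C \setminus \{c\}$ and then verifying $c \notin x_{k-1}$: this is precisely where the circuit axiom (rather than mere dependence of $C$) is used, and it is what makes the contradiction with the minimality in the definition of $\nu$ go through.
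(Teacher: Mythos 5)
Your proof is correct. The paper offers no argument for this lemma, simply citing Bj\"orner, and your write-up is a complete and accurate rendition of the standard argument behind that citation: the label letters $b_1,\ldots,b_d$ satisfy $x_k=b_1\vee\cdots\vee b_k$, forcing $B$ to be a basis, and a broken circuit $C\setminus\{c\}\subseteq B$ would make the smaller atom $c$ satisfy $x_{k-1}\vee c=x_k$ at the top index $k$ of $C\setminus\{c\}$, contradicting the minimality built into $\nu$ --- exactly the two points (the choice of largest index $k$ and the verification $c\not\le x_{k-1}$ via the circuit axiom) that you correctly flag as the places needing care.
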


Now let $B_1, B_2, \ldots, B_t$ be all the nbc-bases of $M$ listed in lexicographic order.  Fix some $j\leq t$ and let $B_j=\{a_{i_1}, a_{i_2}, \ldots, a_{i_d}\}$ with $i_1  < i_2 < \cdots < i_d$.  For a permutation $\sigma\in \mathcal{S}_d$, define a maximal chain  $\textbf{c}_\sigma^j$ of $P$ by 
\[
\bc_\sigma^j := \hat{0} < a_{i_{\sigma(1)}} < (a_{i_{\sigma(1)}}\vee a_{i_{\sigma(2)}} ) < \ldots <  ( a_{i_{\sigma(1)}}\vee a_{i_{\sigma(2)}}\vee \ldots \vee a_{i_{\sigma(d)}}).
\]  
The \emph{basis labeling} $\lambda_j(\textbf{c}_\sigma^j)$ of $\textbf{c}_\sigma^j$ is the word
$i_{\sigma(1)}i_{\sigma(2)}\ldots i_{\sigma(d)}$. 

For each $i$ with $1 \leq i\leq t$, let $P_i$ be the subposet of $P$ whose set of maximal chains is $\{\textbf{c}_\sigma^i : \sigma \in \mathcal{S}_d\}$ and let $\Sigma_i$ be the simplicial complex whose facets are maximal chains in $P_i$ that are not chains in $P_j$ for any $j < i$.   

\begin{thm}[\cite{ns}]  
$\Sigma_1,\Sigma_2, \ldots,\Sigma_t$ is a convex-ear decomposition of $\Delta(P)$.  
\end{thm}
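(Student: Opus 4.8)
The plan is to deduce the statement from Theorem~\ref{boolpieces}. Order the nbc-bases lexicographically as $B_1 <B_2 <\cdots <B_t$, equip each $P_i$ with its basis labeling $\lambda_i$, and check that the family $P_1,P_2,\ldots,P_t$ (in this order) satisfies the three hypotheses of Theorem~\ref{boolpieces}. Applying that theorem with $S=[d-1]$ then produces a convex-ear decomposition of $\Delta(P_{[d-1]})=\Delta(P)$, and a short bookkeeping step identifies the pieces it constructs with $\Sigma_1,\Sigma_2,\ldots,\Sigma_t$.

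Hypotheses 1 and 2 are the routine ones. For (1): since $B_i=\{a_{i_1},\ldots,a_{i_d}\}$ is a basis it is independent, so $\mathrm{rk}\!\left(\bigvee_{j\in T}a_{i_j}\right)=|T|$ for every $T\subseteq\{1,\ldots,d\}$; hence $T\mapsto\bigvee_{j\in T}a_{i_j}$ is an injective lattice map from $B_d$ onto $P_i$, and under this isomorphism $\lambda_i$ is carried to the standard $\mathcal{S}_d$-EL-labeling of $B_d$ (the cover adjoining $a_{i_j}$ receiving label $i_j$, with $i_1<\cdots<i_d$), so $\lambda_i$ is an $\mathcal{S}_d$-EL-labeling of $P_i$. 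For (2): an arbitrary chain of $P$ extends to a maximal chain $\bc\colon \hat0=x_0\lessdot x_1\lessdot\cdots\lessdot x_d=\hat1$; writing $\nu(\bc)=w_1w_2\cdots w_d$ we get $x_k=x_{k-1}\vee a_{w_k}=a_{w_1}\vee\cdots\vee a_{w_k}$, so $\{a_{w_1},\ldots,a_{w_d}\}$ spans the rank-$d$ lattice, is therefore a basis, and by Lemma~\ref{nbc} is an nbc-basis, say $B_i$; reading off the permutation $\sigma$ with $\nu(\bc)=i_{\sigma(1)}\cdots i_{\sigma(d)}$ shows $\bc=\bc_\sigma^i$, so $\bc$, and hence the original chain, lies in $P_i$. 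In particular $\Delta(P)=\bigcup_i\Delta(P_i)$.

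Hypothesis 3 is the heart of the matter, and I expect it to be the main obstacle. Suppose $\be$ is a chain of $P_i$ that is also a chain of some $P_j$ with $j<i$, and write the maximal chain $\Upsilon_{\lambda_i}(\be)$ of $P_i$ as $\bc_\tau^i$. Comparing the two labelings along $\bc_\tau^i$, the $k$-th letter of $\nu(\bc_\tau^i)$ is at most the $k$-th letter $i_{\tau(k)}$ of $\lambda_i(\bc_\tau^i)$, because $i_{\tau(k)}$ itself completes the relevant flat; since the matroid is simple these letters are distinct, so a short rearrangement argument shows the sorted letter set of $\nu(\bc_\tau^i)$ is lexicographically at most $B_i$. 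By Lemma~\ref{nbc} that letter set is an nbc-basis $B_{j'}$ with $j'\le i$, and exactly as in the verification of hypothesis 2 the chain $\bc_\tau^i$ is a maximal chain of $P_{j'}$. It remains to rule out $j'=i$: that is precisely the case in which $\nu$ and $\lambda_i$ agree on $\bc_\tau^i$, forcing $\bc_\tau^i=\Upsilon_\nu(\be)$; here one must invoke the no-broken-circuit property of $B_i$ together with the hypothesis that $\be$ already lies in the earlier piece $P_j$ to exhibit a lexicographically smaller nbc-basis, giving $j'<i$. This last step is essentially the combinatorial core of \cite{ns}, and it is where the real work lies — controlling the exchange so as to stay among nbc-bases while decreasing the lexicographic order.

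Granting hypotheses 1--3, Theorem~\ref{boolpieces} with $S=[d-1]$ yields a convex-ear decomposition of $\Delta(P_{[d-1]})=\Delta(P)$. Finally, unwinding the construction inside the proof of Theorem~\ref{boolpieces} for $S=[d-1]$: the only permutation of $[d]$ with descent set $[d-1]$ is the decreasing one, so for each $P_q$ there is a single relevant chain, the poset $L_1$ is all of $P_q$, and the complex $\Gamma_1$ is $\Delta(P_q)$; the piece the proof then appends is the complex whose facets are the maximal chains of $P_q$ that are not chains of $\bigcup_{i<q}\Delta(P_i)$ — which is exactly $\Sigma_q$. Hence the decomposition produced is $\Sigma_1,\Sigma_2,\ldots,\Sigma_t$ (after deleting any empty terms), as claimed.
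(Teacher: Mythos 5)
First, a point of comparison: the paper does not actually prove this statement at all --- it is imported from \cite{ns}, with Lemma \ref{labels} quoted (also without proof) as the key tool behind it. So your attempt cannot coincide with ``the paper's proof''; the relevant question is whether your reduction to Theorem \ref{boolpieces} is self-contained. Much of it is fine: the verifications of hypotheses (1) and (2) are correct (hypothesis (2) is essentially the paper's own argument via Lemma \ref{nbc}), the letterwise bound that each letter of $\nu(\bc_\tau^i)$ is at most the corresponding letter of $\lambda_i(\bc_\tau^i)$ is right, the sorted-sequence/lexicographic step giving $j'\leq i$ works, and your unwinding of the construction in Theorem \ref{boolpieces} for $S=[d-1]$ (one descent-set-$[d-1]$ chain per $P_q$, $L_1=P_q$, $\Gamma_1=\Delta(P_q)$, appended piece $=\Sigma_q$) correctly identifies the resulting decomposition with $\Sigma_1,\ldots,\Sigma_t$. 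This reversal of the paper's logic --- using Theorem \ref{boolpieces} to recover the \cite{ns} theorem rather than using \cite{ns} machinery to feed Theorem \ref{boolpieces} --- would be a genuinely pleasant alternative route if it were complete.

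It is not complete, and the gap sits exactly where the theorem lives: hypothesis (3). Having reduced to the case $j'=i$, i.e.\ $\nu(\bc_\tau^i)=\lambda_i(\bc_\tau^i)$ and hence $\Upsilon_{\lambda_i}(\be)=\Upsilon_\nu(\be)$, you write that one ``must invoke the no-broken-circuit property of $B_i$ together with the hypothesis that $\be$ already lies in the earlier piece $P_j$ to exhibit a lexicographically smaller nbc-basis,'' and you explicitly defer this to ``the combinatorial core of \cite{ns}.'' But that deferred step is precisely the content of Lemma \ref{labels}, which is what the paper itself leans on when it verifies hypothesis (3) in Section \ref{geolattice} (take $j$ minimal with $\be\in P_j$, use Lemma \ref{labels} to get $\Upsilon_{\lambda_j}(\be)=\Upsilon_\nu(\be)$, then uniqueness of $\Upsilon_\nu(\be)$ forces $\Upsilon_{\lambda_i}(\be)$ into an earlier piece). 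Note the trap: the fact that $\be$ lies in an earlier $P_j$ lets you run your domination argument on the completion $\Upsilon_{\lambda_j}(\be)$, but that controls a \emph{different} maximal chain; to rule out $j'=i$ you must show that $\Upsilon_\nu(\be)$ itself lies in some $P_k$ with $k<i$, and no argument for this is given. Until you prove Lemma \ref{labels} (or an equivalent broken-circuit exchange argument), the proposal assumes the heart of the theorem it sets out to prove.
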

     
The next lemma, shown in \cite{ns}, is the key tool in proving the above theorem.

\begin{lem}\label{labels}
A chain $\textbf{c}$ in $P_i$ is in $\Sigma_i$ if and only if
\[
\lambda_i(\bc)=\nu(\bc).
\]
\end{lem}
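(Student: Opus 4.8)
The plan is to reduce both sides of the claimed equivalence to a single intrinsic condition on $\bc$: that $i$ is the least index $\ell$ for which $\bc$ is a chain in $P_\ell$. We may assume $\bc$ is a maximal chain of $P_i$ (for these $\lambda_i(\bc)$ is defined, and a face of $\Sigma_i$ is a subchain of one of its facets).

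First I would compare $\nu(\bc)$ and $\lambda_i(\bc)$ letter by letter. Write $\bc = \bc_\sigma^i$, so that $\bc : \hat{0} = x_0 < x_1 < \cdots < x_d = \hat{1}$ with $x_k = x_{k-1} \vee a_{i_{\sigma(k)}}$ and $\lambda_i(\bc) = i_{\sigma(1)} i_{\sigma(2)} \cdots i_{\sigma(d)}$. Since $x_{k-1} \vee a_{i_{\sigma(k)}} = x_k$, the definition of $\nu$ gives $\nu(x_{k-1}, x_k) \leq i_{\sigma(k)}$ for each $k$, so $\nu(\bc) \leq \lambda_i(\bc)$ coordinatewise; the same computation carried out inside any $P_j$ that contains $\bc$ gives $\nu(\bc) \leq \lambda_j(\bc)$ coordinatewise there as well.

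Next I would read off the intrinsic meaning of the $\nu$-label. For an arbitrary maximal chain of $P$ with $\nu$-label $\mu_1 \mu_2 \cdots \mu_d$, the definition of $\nu$ forces $x_k = x_{k-1} \vee a_{\mu_k}$, hence $x_k = a_{\mu_1} \vee \cdots \vee a_{\mu_k}$; in particular the $\mu_k$ are distinct and, by Lemma~\ref{nbc}, $B := \{a_{\mu_1}, \dots, a_{\mu_d}\}$ is an nbc-basis, say $B = B_\ell$. Sorting $B_\ell = \{a_{m_1}, \dots, a_{m_d}\}$ with $m_1 < \cdots < m_d$ and defining $\rho \in \mathcal{S}_d$ by $\mu_k = m_{\rho(k)}$, the identities above say precisely that $\bc = \bc_\rho^\ell$, and then $\lambda_\ell(\bc) = m_{\rho(1)} \cdots m_{\rho(d)} = \nu(\bc)$. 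Thus $\bc$ is a chain in $P_\ell$, where $\ell$ is the index of the nbc-basis formed by the letters of $\nu(\bc)$, and there the basis label equals the $\nu$-label.

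The step I expect to require the most care is showing that this $\ell$ is the \emph{least} index with $\bc$ a chain in $P_\ell$. If $\bc = \bc_\tau^j$ is a chain in $P_j$, the first step gives $\mu_k \leq j_{\tau(k)}$ for all $k$; a standard threshold-counting argument (for every $v$, $\#\{k : \mu_k \leq v\} \geq \#\{k : j_{\tau(k)} \leq v\} = \#\{s : j_s \leq v\}$) upgrades this to the sorted inequality $m_s \leq j_s$ for all $s$, whence $B_\ell \leq_{\mathrm{lex}} B_j$ and $\ell \leq j$. Combined with the previous paragraph, $\ell = \min\{j : \bc \text{ is a chain in } P_j\}$. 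Finally I would assemble the equivalence: $\bc$ is a facet of $\Sigma_i$ iff $\bc$ is a chain in $P_i$ but in no $P_j$ with $j < i$, i.e.\ iff $\ell = i$; on the other hand $\lambda_i(\bc) = \nu(\bc)$ forces the two letter sets to coincide, so $B_i$ and $B_\ell$ have the same index set and hence $i = \ell$ (distinct nbc-bases have distinct index sets), while conversely $i = \ell$ yields $\lambda_i(\bc) = \nu(\bc)$ by the third step. Since both sides of the lemma are equivalent to $\ell = i$, the proof is complete.
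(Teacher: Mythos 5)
The paper itself never proves this lemma --- it is quoted from \cite{ns} with the proof omitted --- so there is no internal argument to compare yours against line by line; judged on its own, your proof is correct and complete. The reduction of both sides to the condition that $i$ is the least index $\ell$ with $\bc$ a chain of $P_\ell$ works: the coordinatewise bound $\nu(\bc)\leq\lambda_j(\bc)$ for every $P_j$ containing $\bc$ is immediate from the definition of the minimal labeling; Lemma \ref{nbc} does give that the (distinct) letters of $\nu(\bc)$ form an nbc-basis $B_\ell$, with $\bc=\bc_\rho^\ell$ and $\lambda_\ell(\bc)=\nu(\bc)$; and your threshold-counting step correctly upgrades the coordinatewise bound to $m_s\leq j_s$ for the sorted index sequences, which under the lexicographic listing of the nbc-bases yields $\ell\leq j$, so $\ell$ is indeed the minimal index. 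The final assembly (facet of $\Sigma_i$ iff $\ell=i$ iff $\lambda_i(\bc)=\nu(\bc)$, using that distinct nbc-bases have distinct index sets) is sound, and your restriction to maximal chains is the right reading of the statement: $\lambda_i$ is only defined on saturated chains, and this is exactly how the lemma is used in Section 4, where it is applied to the maximal chains $\Upsilon_{\lambda_j}(\be)$ and $\Upsilon_{\lambda_i}(\be)$. Since the argument runs on the same ingredients one would expect from \cite{ns} (nbc-bases extracted from $\nu$-labels and lex-minimality of the basis ordering), treat it as a faithful reconstruction of the cited proof rather than a genuinely different route.
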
 

The main theorem of this section is the following:

\begin{thm}
Let $P$ be a rank-$d$ geometric lattice.  Then $\Delta(P_S)$ admits a convex-ear decomposition for any $S \subseteq [d-1]$. 
\end{thm}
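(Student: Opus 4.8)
The plan is to obtain this as a direct application of Theorem~\ref{boolpieces}. Specifically, I take $P_1, P_2, \ldots, P_t$ to be the subposets of $P$ built by Nyman and Swartz from the nbc-bases $B_1, B_2, \ldots, B_t$ of $M$ (listed in lexicographic order), each equipped with its basis labeling $\lambda_i$, and then verify the three hypotheses of Theorem~\ref{boolpieces}.

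Hypothesis~1 reduces to a short matroid observation: since $B_i$ is a basis of $M$ it is independent, and hence distinct subsets $T, T' \subseteq B_i$ have distinct closures (if $\mathrm{cl}(T) = \mathrm{cl}(T')$ then $T \cup T'$ is an independent set of rank $|T|$, forcing $T = T'$). Thus $T \mapsto \mathrm{cl}(T)$ is an isomorphism from the Boolean lattice on $B_i$ onto $P_i$, and under this isomorphism the basis labeling $\lambda_i$ is exactly the standard $\mathcal{S}_d$-EL-labeling of $B_d$; in particular $P_i \cong B_d$ and $\lambda_i$ is an $\mathcal{S}_d$-EL-labeling. Hypothesis~2 is immediate from the theorem of Nyman and Swartz recalled above: their complexes satisfy $\Sigma_i \subseteq \Delta(P_i)$ and $\bigcup_i \Sigma_i = \Delta(P)$, so $\Delta(P) = \bigcup_i \Sigma_i \subseteq \bigcup_i \Delta(P_i) \subseteq \Delta(P)$, which says exactly that every chain of $P$ is a chain of some $P_i$.

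The real content is Hypothesis~3. I would argue as follows. Suppose $\be$ is a chain of $P_i$ that is also a chain of some $P_j$ with $j < i$, and suppose for contradiction that $\Upsilon_{\lambda_i}(\be)$ lies in no $P_k$ with $k < i$. Being a maximal chain of $P_i$, $\Upsilon_{\lambda_i}(\be)$ is then a facet of $\Sigma_i$, so Lemma~\ref{labels} gives $\lambda_i(\Upsilon_{\lambda_i}(\be)) = \nu(\Upsilon_{\lambda_i}(\be))$. By construction $\Upsilon_{\lambda_i}(\be)$ has $\lambda_i$-increasing label on each gap of $\be$, so this equality forces it to have $\nu$-increasing label on each gap of $\be$ as well; since $\nu$ is an EL-labeling of $P$, uniqueness of increasing chains then identifies $\Upsilon_{\lambda_i}(\be)$ with $\Upsilon_\nu(\be)$, the lexicographically first completion of $\be$ in all of $P$. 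The contradiction should now come from the hypothesis $\be \in P_j$: completing $\be$ inside the Boolean piece $P_j$ produces the chain $\Upsilon_{\lambda_j}(\be) \in P_j$, whose $\nu$-label is coordinatewise at most its basis label $\lambda_j(\Upsilon_{\lambda_j}(\be))$ (a word whose letters are the atoms of $B_j$), and one shows that this forces $\Upsilon_\nu(\be)$ itself to lie in some $P_k$ with $k \leq j$ — contradicting that $\Upsilon_{\lambda_i}(\be) = \Upsilon_\nu(\be)$ avoids every earlier piece.

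This last step is the main obstacle: it is essentially the matroid-theoretic assertion that running the minimal-labeling completion on a chain that already sits inside a Boolean nbc-piece cannot carry it out of $\bigcup_{k \leq j} P_k$, and proving it will require the usual circuit-exchange manipulations with nbc-bases together with the defining minimality of $\nu$, in the spirit of the proof of Lemma~\ref{labels} in \cite{ns}. Once Hypothesis~3 is established, Theorem~\ref{boolpieces} applies directly and produces a convex-ear decomposition of $\Delta(P_S)$ for every $S \subseteq [d-1]$.
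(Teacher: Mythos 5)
Your overall strategy is the paper's: feed the Nyman--Swartz pieces $P_1, \ldots, P_t$ with their basis labelings into Theorem~\ref{boolpieces}, and your verifications of hypotheses~1 and~2 are fine (the paper gets hypothesis~2 directly from Lemma~\ref{nbc}, but routing it through $\bigcup_i \Sigma_i = \Delta(P)$ is equivalent). The first half of your hypothesis~3 argument also matches the paper: if $\Upsilon_{\lambda_i}(\be)$ avoided every earlier piece it would be a facet of $\Sigma_i$, so Lemma~\ref{labels} gives $\lambda_i(\Upsilon_{\lambda_i}(\be)) = \nu(\Upsilon_{\lambda_i}(\be))$ and hence $\Upsilon_{\lambda_i}(\be) = \Upsilon_\nu(\be)$.

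But the step you flag as ``the main obstacle'' --- showing that $\Upsilon_\nu(\be)$ must lie in some $P_k$ with $k \leq j$ --- is exactly the content you have not supplied, so the proof is incomplete as written; and it does not require any circuit-exchange or nbc-basis manipulations. The missing idea is simply to take $j$ \emph{minimal} with $\be$ a chain of $P_j$ and apply Lemma~\ref{labels} a second time, now to $\bc = \Upsilon_{\lambda_j}(\be)$: if $\bc$ were a chain of some $P_k$ with $k < j$, then its subchain $\be$ would be too, contradicting minimality of $j$; hence $\bc$ is a facet of $\Sigma_j$, so $\lambda_j(\bc) = \nu(\bc)$, and by uniqueness of increasing chains under the EL-labeling $\nu$ this forces $\bc = \Upsilon_\nu(\be)$. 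Combined with your identification $\Upsilon_{\lambda_i}(\be) = \Upsilon_\nu(\be)$, this gives $\Upsilon_{\lambda_i}(\be) = \Upsilon_{\lambda_j}(\be)$, a chain of $P_j$ with $j < i$, contradicting your standing assumption --- which is precisely how the paper closes the argument. Note also that without the minimality assumption on $j$ the statement you were trying to prove in your sketch is harder than what is needed, which is why you were led toward matroid-theoretic machinery; the minimal choice of $j$ lets Lemma~\ref{labels} do all the work.
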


\begin{proof}
We show that $P$ satisfies the hypotheses of Theorem \ref{boolpieces}, proving each of the three properties separately. 

First note that each $P_i$ is isomorphic to the Boolean lattice $B_d$ under the mapping $a_{i_{\sigma(1)}}\vee a_{i_{\sigma(2)}}\vee \ldots \vee a_{i_{\sigma(m)}}\rightarrow \{\sigma(1),\sigma(2), \ldots, \sigma(m)\}$, and so property (\emph{1}) holds.  Moreover, the basis labeling $\lambda_i$ is an $\mathcal{S}_d$-EL-labeling of $P_i$ (though with the alphabet $\{i_1, i_2, \ldots, i_d\}$ rather than $[d]$).  

By Lemma \ref{nbc}, the $\nu$-label of any maximal chain $\bc$ is a word in some nbc-basis (say $B_i$).  Thus $\bc$ is a chain in $P_i$, meaning property (\emph{2}) holds.  

To show property (\emph{3}), fix some $i$ and suppose that $\textbf{e}$ is a non-maximal chain in $P_i$ that is also a chain in $P_j$ for some $j < i$.  Suppose that $j$ is the least such integer, and consider the maximal chain $\textbf{c} = \Upsilon_{\lambda_j}(\textbf{e})$.  This chain can clearly not be in $L_k$ for any $k < j$, because then $\textbf{e}$ would be a chain in $L_k$, contradicting the minimality of $j$.  Thus $\lambda_j(\bc)=\nu(\bc)$ by Lemma \ref{labels}, meaning $\textbf{c} = \Upsilon_\nu(\be)$.  Now consider the chain $\textbf{c}' = \Upsilon_{\lambda_i}(\be)$.  If $\bc'$ is not a chain in $L_k$ for any $k < i$ then, again by Lemma \ref{labels}, $\nu(\bc')= \lambda_i(\bc')$.  But then $\bc'=\Upsilon_\nu(\be)$, which is a contradiction since the chain $\Upsilon_\nu(\be)$ is uniquely determined.  Thus $\Upsilon_{\lambda_i}(\be)$ must be a chain in $L_k$ for some $k < i$.  Applying Theorem \ref{boolpieces} completes the proof.  
\end{proof}

\ssection{Rank-selected face posets}\label{faceposetsection}

The main result of this section can be seen as motivated by Hibi's result (\cite{hi}) that the codimension-$1$ skeleton of a shellable complex $\Sigma$ is $2$-Cohen-Macaulay.  For a simplicial complex $\Sigma$ we write $P_\Sigma$ to mean its \emph{face poset}, the poset of all faces of $\Sigma$ ordered by inclusion.  Note that $P_\Sigma$ usually does not have a unique maximal element, but the notion of the rank-selected subposet $(P_\Sigma)_S$ easily generalizes. 

\begin{thm}\label{faceposet}
Let $\Sigma$ be a $(d-1)$-dimensional shellable complex.  Then $\Delta((P_\Sigma)_S)$ admits a convex-ear decomposition for any $S \subseteq [d-1]$.  
\end{thm}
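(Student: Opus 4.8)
The plan is to derive the result from Theorem~\ref{boolpieces}, after first rephrasing the left-hand side in terms of a rank-$d$ poset with a $\hat{1}$. Let $\Sigma^{(d-2)}$ denote the codimension-$1$ skeleton of $\Sigma$; since $\Sigma$ is pure of dimension $d-1$, this skeleton is pure of dimension $d-2$. Put $P := P_{\Sigma^{(d-2)}}\cup\{\hat{1}\}$, the face poset of the skeleton with a maximum element adjoined. Then $P$ is a rank-$d$ poset with $\hat{0}=\emptyset$ and $\hat{1}$. Because $S\subseteq[d-1]$, the rank selection $(P_\Sigma)_S$ only retains faces of dimension at most $d-2$, i.e.\ faces of $\Sigma^{(d-2)}$; hence $\Delta((P_\Sigma)_S)=\Delta(P_S)$, and it suffices to find a convex-ear decomposition of the latter.

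Fix a shelling $F_1,F_2,\dots,F_r$ of $\Sigma$. For each $i$ let $P_i$ be the subposet of $P$ consisting of $\hat{1}$ together with every proper subset of $F_i$; each such subset has at most $d-1$ elements, hence is a face of $\Sigma^{(d-2)}$ and so an element of $P$. I claim $P_1,\dots,P_r$ satisfy hypotheses (1)--(3) of Theorem~\ref{boolpieces}. Hypothesis (1) holds because the proper subsets of the $d$-element set $F_i$ form a copy of $B_d$ with its top removed, and adjoining a maximum above the $d$ coatoms recovers a poset isomorphic to $B_d$; we will take $\lambda_i$ to be the standard $\mathcal{S}_d$-EL-labeling of $B_d$ transported through a bijection $\beta_i\colon F_i\to[d]$ chosen below. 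Hypothesis (2) holds because $\Sigma$ is pure: any chain of $P$ consists of $\hat{0}$, $\hat{1}$, and a chain of faces of $\Sigma^{(d-2)}$ that all lie in the largest of them, and that largest face is a proper subset of some facet $F_i$, so the whole chain lies in $P_i$; thus $\Delta(P)=\bigcup_i\Delta(P_i)$.

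Hypothesis (3) is the crux, and this is where the shelling enters. For each $i$ let $\mathcal{R}_i\subseteq F_i$ be the restriction face of $F_i$ in the shelling. The relevant facts---extracted from Definition~\ref{altshell}---are that a face $G\subsetneq F_i$ is \emph{old} (contained in some $F_j$ with $j<i$) exactly when $\mathcal{R}_i\not\subseteq G$, and that a coatom $F_i\setminus\{v\}$ is old exactly when $v\in\mathcal{R}_i$. Now choose $\beta_i$ so that the vertices of $\mathcal{R}_i$ receive the $|\mathcal{R}_i|$ largest labels. Suppose $\be$ is a chain of $P_i$ which is also a chain of $P_j$ for some $j<i$, and let $A\subsetneq F_i$ be the largest face occurring in $\be$ (so $A=\emptyset$ if $\be\subseteq\{\hat{0},\hat{1}\}$). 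As $\be\subseteq P_j$ we have $A\subseteq F_j$, so $A$ is old and $\mathcal{R}_i\not\subseteq A$. Reading inside $B_d$, the maximal chain $\Upsilon_{\lambda_i}(\be)$ is built by filling the gaps of $\be$ with increasing $\lambda_i$-labels, so its coatom is $F_i$ with the vertex of $F_i\setminus A$ of largest $\beta_i$-label deleted. Since $\mathcal{R}_i\setminus A\neq\emptyset$ and $\mathcal{R}_i$ carries the largest labels, that deleted vertex $v$ lies in $\mathcal{R}_i$; hence the coatom of $\Upsilon_{\lambda_i}(\be)$ is an old coatom $F_i\setminus\{v\}\subseteq F_j$ for some $j<i$, and every element of $\Upsilon_{\lambda_i}(\be)$ is either $\hat{1}$ or contained in this coatom, hence (being contained in $F_j$) lies in $P_j$. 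Therefore $\Upsilon_{\lambda_i}(\be)$ is a chain of $P_j$ with $j<i$, which is exactly hypothesis (3).

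Having checked (1)--(3), Theorem~\ref{boolpieces} provides a convex-ear decomposition of $\Delta(P_S)=\Delta((P_\Sigma)_S)$ for every $S\subseteq[d-1]$. The only delicate step is hypothesis (3): one must design the labelings $\lambda_i$ so that filling an old chain with increasing labels necessarily completes to a maximal chain that has re-entered an earlier Boolean piece, and the mechanism that guarantees this is forcing the restriction face $\mathcal{R}_i$ to carry the top labels, so that the last edge added in the $\Upsilon_{\lambda_i}$-completion deletes a vertex of $\mathcal{R}_i$ and thus produces an old coatom. Everything else is bookkeeping about rank selection and the face poset.
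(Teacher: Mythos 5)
Your proposal is correct and follows essentially the same route as the paper: reduce to Theorem~\ref{boolpieces} by forming the face poset with a single top element, take the $P_i$ to be the Boolean lattices on the facets of a shelling, and verify hypothesis~(3) by labeling the vertices of the restriction (minimal new) face $r(F_i)$ last, so that $\Upsilon_{\lambda_i}$ of an old chain ends in an old coatom. Your construction of $P$ via the codimension-$1$ skeleton plus an adjoined $\hat{1}$ is just an isomorphic rephrasing of the paper's identification of the maximal elements of $P_\Sigma$, so there is no substantive difference.
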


\begin{proof}
We wish to apply Theorem \ref{boolpieces} but, as noted above, a slight adjustment is needed: unless $\Sigma$ consists of a single facet, $P_\Sigma$ has no maximal element.  To this end, let $P$ be the poset $P_\Sigma$ with all its maximal elements identified.  As usual, let $\hat{1}$ denote the maximal element of $P$.  Clearly, for any $S \subseteq [d-1]$, 
\[
\Delta(P_S) = \Delta((P_\Sigma)_S)
\]
So, it suffices to apply Theorem \ref{boolpieces} to $P$.  Fix a shelling $F_1, F_2, \ldots, F_r$ of $\Sigma$, and for each $i$ let $P_i$ be the face poset of $F_i$ (but with its maximal element $F_i$ replaced with $\hat{1}$, the maximal element of $P$).  We claim that the sequence $P_1, P_2, \ldots P_r$ satisfies the hypotheses of Theorem \ref{boolpieces}.  Property (\emph{1}) follows immediately, as the face poset of a $(d-1)$-dimensional simplex is isomorphic to $B_d$.  

For property (\emph{2}), let $\bc$ be a maximal chain of $P$, and let $x$ be its element of rank $d-1$.  Then $x$ is a face of some facet $F_i$, meaning $P_i$ contains the chain $\bc$.  

The proof of property (\emph{3}) relies on the following fact, whose proof is immediate.

\begin{fact}\label{easyfact}
Let $\textbf{e}$ be a non-maximal chain in some $P_i$, and let $x$ be its element of highest rank $\neq d$.  Then $\textbf{e}$ is not a chain in $P_j$ for any $j < i$ if and only if, when viewed as a face of $F_i$, $x$ contains the unique minimal new face $r(F_i)$.  
\end{fact}

Now fix some $i$, and let $V$ be the set of vertices of the facet $F_i$.  Any bijection $\phi: V \rightarrow [d]$ induces an $\mathcal{S}_d$-EL-labeling $\lambda_\phi$ of $P_i$ in the obvious way:  For $x,y \in P_i$ with $y = x \cup \{v\}$ for some vertex $v$ of $F_i$, set $\lambda_\phi(x,y)=\phi(v)$ (if $y = \hat{1}$, let $\lambda_\phi(x, y)$ be the sole vertex in $V \setminus x$).  Let $\phi :F_i\rightarrow [d]$ be any bijection that labels vertices in $r(F_i)$ last.  That is, if $v\in r(F_i)$ and $w\in F_i\setminus r(F_i)$ then $\phi(w) < \phi(v)$.  Set $\lambda_i = \lambda_\phi$.  Now suppose $\be$ is a non-maximal chain in $P_i$ that is also a chain in $P_j$ for some $j < i$, and let $x$ be the element of $\be$ of highest rank $\neq d$.  By Fact \ref{easyfact}, $r(F_i) \nsubseteq x$.  If $v$ is the vertex in $F_i\setminus x$ with the greatest $\phi$-label then, by definition of $\phi$, $v\in r(F_i)$.  Letting $y$ be the element of $\Upsilon_{\lambda_i}(\be)$ of rank $d-1$, it follows that $v \notin y$.  So $\Upsilon_{\lambda_i}(\be)$ is a chain in $P_k$ for some $k < i$, and property (\emph{3}) holds.
\end{proof}

In many cases, the above theorem does not hold if $d\in S$.  For example, if $\Sigma$ is the shellable complex consisting of two $2$-dimensional simplices joined at a common boundary facet and $S =\{2,3\}\subseteq [3]$, then $\Delta((P_\Sigma)_S)$ does not admit a c.e.d., as it is a tree.

\begin{example}
Let $\Sigma$ be the $2$-skeleton of two $3$-dimensional simplices joined at a common boundary facet.  By Theorem \ref{faceposet}, $\Delta((P_\Sigma)_S)$ admits a c.e.d. for any $S \subseteq [2]$.  Note, however, that $\Sigma$ admits a c.e.d. and, moreover, so does the complex $\Delta((P_\Sigma)_S)$ for any $S \subseteq [3]$.  Figure \ref{wtf} shows the case when $S = \{2,3\}$. 
\end{example}

\begin{figure}[htp]
\centering
\includegraphics[scale=.65]{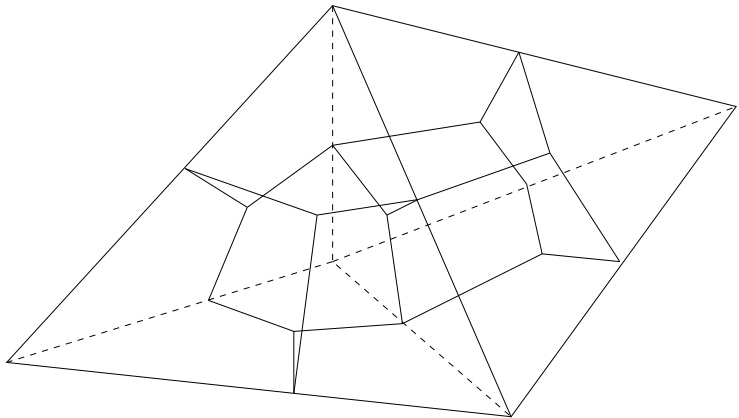}
\caption{The complex $\Delta((P_\Sigma)_{\{2,3\}})$ as a subcomplex of $\Sigma$.}\label{wtf}
\end{figure}

\begin{conj}
When $\Sigma$ is a $(d-1)$-dimensional complex admitting a convex-ear decomposition and $S\subseteq [d]$, the complex $\Delta((P_\Sigma)_S)$ admits a convex-ear decomposition.
\end{conj}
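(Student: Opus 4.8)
\emph{Proof idea.} The plan is to prove this by an iteration that layers the argument of Theorem \ref{boolpieces} on top of itself. Fix a convex-ear decomposition $\Sigma_1, \Sigma_2, \ldots, \Sigma_t$ of $\Sigma$, with each $\Sigma_i$ a pure full-dimensional subcomplex of the boundary $\partial Q_i$ of a simplicial $d$-polytope $Q_i$, and $\Sigma_1 = \partial Q_1$. I would first dispose of the degenerate point that $P_\Sigma$ need not have a $\hat 1$, exactly as in the proof of Theorem \ref{faceposet}: when $S \subseteq [d-1]$ only faces of codimension $\geq 1$ are rank-selected, so $\Delta((P_\Sigma)_S) = \bigcup_{F} \Delta((P_F)_S)$, the union running over the facets $F$ of $\Sigma$, each $P_F$ being a copy of $B_d$. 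Since $\Sigma = \bigcup_i \Sigma_i$ and each $\Sigma_i$ is pure, the facets of $\Sigma$ partition into the facets of $\Sigma_1$, then those of $\Sigma_2$, and so on: for $i>1$ no facet of $\Sigma_i$ lies in $\bigcup_{j<i}\Sigma_j$, since that intersection is $\partial\Sigma_i$ by Definition \ref{ced}(iv) and has dimension $d-2$.

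With this in hand I would build the list of Boolean sublattices required by Theorem \ref{boolpieces} in two nested stages: an outer stage indexed by the ears $\Sigma_1, \ldots, \Sigma_t$, and, within the $i$th ear, an inner ordering of the facets of $\Sigma_i$ governed by a shelling of the ambient sphere $\partial Q_i$ (which is shellable, \cite{ziegler}), each facet equipped with the ``new-face-last'' $\mathcal{S}_d$-EL-labeling from the proof of Theorem \ref{faceposet}. The main work is to verify property (3) of Theorem \ref{boolpieces} for this combined list. For two facets within the same ear $\Sigma_i$ this is precisely the content of Theorem \ref{faceposet}. For a facet of $\Sigma_i$ and an earlier facet lying in some $\Sigma_j$ with $j<i$, one uses Definition \ref{ced}(iv): a chain shared between the two lattices lies in $\Sigma_i \cap (\bigcup_{j<i}\Sigma_j) = \partial\Sigma_i$, and one wants the labeling chosen so that filling in such a chain keeps it inside $\partial\Sigma_i$, hence inside an earlier ear. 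Granting property (3), Theorem \ref{boolpieces} delivers a convex-ear decomposition of $\Delta((P_\Sigma)_S)$ for every $S \subseteq [d-1]$. The subtlety here is that the facets of $\Sigma_i$ need not form an initial segment of a shelling of $\partial Q_i$, so rather than invoking Theorem \ref{faceposet} as a black box one must instead reproduce the restriction step inside the proof of Theorem \ref{boolpieces} — passing to the ear $\Sigma_i$ by discarding chains already accounted for.

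The genuinely hard case, and the main obstacle, is $d \in S$. Now rank-$d$ elements (facets) are selected, and Theorem \ref{boolpieces} does not apply directly: the natural pieces of $\Delta((P_\Sigma)_S)$ are the cones $\{F\} * \Delta((P_F)_{S\setminus\{d\}})$ over the facets $F$ of $\Sigma$, and the face poset of a simplex with a new top element adjoined is not a Boolean lattice. One route is to prove a variant of Theorem \ref{boolpieces} in which the top rank may be rank-selected, forming each piece as a cone over a piece of the decomposition of $\Delta((B_d)_{S\setminus\{d\}})$ supplied by Theorem \ref{booleanced}; the obstruction is that $\Delta((B_d)_{S\setminus\{d\}})$ is $2$-CM but in general not a sphere, so its cone is contractible but not evidently a ball and not evidently a subcomplex of a simplicial polytope boundary as Definition \ref{ced}(ii) demands. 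A more hands-on alternative is to construct, for each such cone-piece, an explicit simplicial $(d+1)$-polytope into whose boundary it embeds, in the spirit of the polytope construction of \cite{me}, and then run the same two-level iteration. I expect the polytopality — equivalently, the shellable-ball structure — of these cone-pieces to be the crux; once it is in place, the gluing should proceed as in the $S \subseteq [d-1]$ case, again using Definition \ref{ced}(iv) to control the interfaces between pieces.
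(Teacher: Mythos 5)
There is a fundamental mismatch here: the statement you are attempting is stated in the paper as a \emph{conjecture}, with no proof given --- the paper only remarks that the case $d\notin S$ follows from Theorem \ref{faceposet}, ``so we need only consider cases where $d\in S$.'' Your proposal does not close that remaining case either. You correctly identify $d\in S$ as the genuinely hard case, but what you offer there is a list of possible strategies together with the obstacles to each: the cone pieces $\{F\}*\Delta((P_F)_{S\setminus\{d\}})$ are not Boolean lattices, so Theorem \ref{boolpieces} does not apply; the cone over $\Delta((B_d)_{S\setminus\{d\}})$ is not evidently a ball nor evidently a subcomplex of a simplicial polytope boundary, as Definition \ref{ced}(ii)--(iii) require; and you explicitly defer the crux (``I expect the polytopality \ldots to be the crux; once it is in place, the gluing should proceed''). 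Identifying where the difficulty lies is not the same as resolving it, so as a proof of the conjecture the proposal has a genuine gap precisely at the only case the paper leaves open. Indeed, the paper's own example (two triangles glued along an edge, $S=\{2,3\}$) shows any argument for $d\in S$ must use the full c.e.d.\ hypothesis in an essential way, since shellability alone is not enough; your sketch never isolates what extra leverage the c.e.d.\ provides.

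There is also a secondary gap in your treatment of $S\subseteq[d-1]$. The conjecture assumes only that $\Sigma$ admits a convex-ear decomposition, not that it is shellable, yet your two-stage construction needs, within each ear $\Sigma_i$ and across the interface with $\bigcup_{j<i}\Sigma_j$, an ordering of facets with the ``unique minimal new face'' structure of Fact \ref{easyfact} in order to define the new-face-last labelings $\lambda_\phi$ and verify property (3) of Theorem \ref{boolpieces}. A shelling of the ambient sphere $\partial Q_i$ does not supply this: as you note yourself, the facets of $\Sigma_i$ need not be an initial segment of such a shelling, and the intersection of a facet of $\Sigma_i$ with the earlier ears is controlled only through $\partial\Sigma_i$, which does not by itself yield a single minimal new face per facet. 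So even the ``easy'' half of your argument, as written, rests on an unverified shellability-type hypothesis about the ears rather than on the c.e.d.\ structure alone.
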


If $d\notin S$, the above conjecture follows from Theorem \ref{faceposet}, so we need only consider cases where $d\in S$.

Now recall that a $(d-1)$-dimensional complex $\Sigma$ with vertex set $V$ is called \emph{balanced} if there exists a $\psi: V\rightarrow [d]$ such that $\psi(v)\neq \psi(w)$ whenever $v$ and $w$ are in a common face of $\Sigma$.  The function $\psi$ is called a \emph{coloring} of $\Sigma$.  

The order complex of any graded poset $P$ is always balanced:  For a vertex $v$ of $\Delta(P)$, simply let $\psi(v)$ be the rank of $v$ when considered as an element of the poset $P$.  Thus the barycentric subdivision of any simplicial complex is balanced, since it is the order complex of its face poset.  

If $\Sigma$ is a $(d-1)$-dimensional balanced complex with coloring $\psi$ and $S\subseteq [d]$, define $\Sigma_S$ to be the subcomplex of $\Sigma$ with faces $\{F\in \Sigma : \psi(v)\in S$ for all $v\in F\}$.  With these new definitions, we can rephrase Theorem \ref{faceposet} in a more geometric tone.  

\begin{cor}
Let $\Sigma'$ be a $(d-1)$-dimensional shellable complex, and let $\Sigma$ be the first barycentric subdivision of its codimension-$1$ skeleton.  Then, for any coloring $\psi$ of the vertices of $\Sigma$ and any $S\subseteq [d-1]$, the complex $\Sigma_S$ admits a convex-ear decomposition.
\end{cor}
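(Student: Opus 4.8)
The plan is to read the statement off from Theorem~\ref{faceposet}; the only point that goes beyond a change of language is that $\Sigma_S$ is defined relative to a choice of coloring $\psi$, whereas Theorem~\ref{faceposet} produces a decomposition for one particular coloring (the one by rank). So the proof has two parts: a translation showing the rank coloring works, and a rigidity statement to the effect that every coloring agrees with the rank coloring up to a permutation of the color set.

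For the translation, let $\Sigma''$ be the codimension-$1$ skeleton of $\Sigma'$, so that $\Sigma''$ is pure of dimension $d-2$ and $\Sigma=\Delta(P_{\Sigma''})$. Let $\psi_0$ be the canonical coloring of $\Sigma$: a vertex $F$ (a nonempty face of $\Sigma''$) receives the color $\dim F+1$, its rank in $P_{\Sigma''}$, so the colors used are precisely $[d-1]$. Unwinding the definition, for $S\subseteq[d-1]$ the complex $\Sigma_S$ formed with respect to $\psi_0$ is the order complex of the subposet of $P_{\Sigma''}$ consisting of the faces of dimension $s-1$ for $s\in S$; but that subposet is exactly $(P_{\Sigma'})_S$. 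Hence $\Sigma_S=\Delta\big((P_{\Sigma'})_S\big)$, which admits a convex-ear decomposition by Theorem~\ref{faceposet}, so the case $\psi=\psi_0$ is done.

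For the rigidity, let $\psi$ be any coloring of $\Sigma$. Each facet $\be=(x_1<x_2<\cdots<x_{d-1})$ of $\Sigma$ is a maximal chain of $P_{\Sigma''}$ with $x_k$ of rank $k$, and $\psi$ restricts on it to a bijection onto $[d-1]$; record this as the permutation $\theta_{\be}$ given by $\theta_{\be}(k)=\psi(x_k)$, so that $\psi|_{\be}=\theta_{\be}\circ\psi_0|_{\be}$. If two facets of $\Sigma$ share a ridge they differ in a single vertex, lying at one rank $k$; then $\theta_{\be}$ and $\theta_{\be'}$ agree away from $k$, hence everywhere. Now $\Sigma''$, as the codimension-$1$ skeleton of a shellable complex, is shellable and in particular strongly connected, and hence so is its barycentric subdivision $\Sigma$; thus $\Sigma$ is connected in codimension one, all of its facets are joined by ridge-moves, and $\theta_{\be}$ is one fixed permutation $\pi$ of $[d-1]$ for every $\be$. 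Therefore $\psi=\pi\circ\psi_0$, whence $\Sigma_S$ relative to $\psi$ equals $\Sigma_{\pi^{-1}(S)}$ relative to $\psi_0$; since $\pi^{-1}(S)\subseteq[d-1]$, this has a convex-ear decomposition by the previous paragraph. (When $d\le 2$, $\Sigma$ is at most $0$-dimensional and the conclusion is immediate.)

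The real obstacle is the rigidity step: for a general complex it genuinely fails — a ``pinched'' $\Sigma''$ carries balanced colorings that are not constant on ranks, and then $\Sigma_S$ need not be an order complex of a rank-selected subposet, so Theorem~\ref{faceposet} gives no information. Shellability of $\Sigma'$ is what rules this out, via the facts that skeleta and barycentric subdivisions of shellable complexes are again shellable, so that $\Sigma$ is strongly connected. The remaining steps — identifying $\Sigma_S$ with $\Delta((P_{\Sigma'})_S)$ and quoting Theorem~\ref{faceposet} — are bookkeeping.
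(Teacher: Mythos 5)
Your proof is correct and takes essentially the route the paper intends: the corollary is Theorem \ref{faceposet} restated, since under the rank coloring $\Sigma_S=\Delta\big((P_{\Sigma'})_S\big)$ for $S\subseteq[d-1]$. Your extra rigidity step --- that strong connectedness of the barycentric subdivision forces any coloring to coincide with the rank coloring up to a permutation of $[d-1]$, so the ``any coloring'' quantifier is harmless --- correctly supplies a detail the paper leaves implicit.
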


\ssection{The flag h-vector of a face poset}\label{flaghsection}

Our goal in this section is to prove an analogue of the following theorem, shown in \cite{ns}, for Cohen-Macaulay complexes.  In this section, we assume a basic working knowledge of the Stanley-Reisner ring of a simplicial complex and its Hilbert series (see \cite{greenstanley}).  

\begin{thm}[\cite{ns}]
Let $L$ be a rank-$d$ geometric lattice, and let $S, T \subseteq [d-1]$.  If $S$ dominates $T$, the flag h-vector of $\Delta(L)$ satisfies $h_T \leq h_S$.
\end{thm}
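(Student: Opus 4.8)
The plan is to exploit the convex-ear decomposition $\Sigma_1,\Sigma_2,\ldots,\Sigma_t$ of $\Delta(L)$ recalled in Section~\ref{geolattice}, together with the descent-set interpretation of the flag $h$-vector furnished by the minimal labeling $\nu$. The first step is to record a partition of the maximal chains of $L$: since each $P_i\cong B_d$ contributes genuine maximal chains of $L$, and $\Sigma_i$ is defined to be the complex of maximal chains of $P_i$ not lying in any $P_j$ with $j<i$, every maximal chain $\bc$ of $L$ is a facet of exactly one $\Sigma_i$, namely the one whose index is least among all $i$ with $\bc$ a chain in $P_i$. Combining this partition with the B\"orner--Wachs formula $h_S=\#\{$maximal chains of $L$ with $\nu$-descent set $S\}$ (\cite{bw}) and with Lemma~\ref{labels} (a chain $\bc_\sigma^i$ of $P_i$ lies in $\Sigma_i$ iff $\nu(\bc_\sigma^i)=\lambda_i(\bc_\sigma^i)=i_{\sigma(1)}i_{\sigma(2)}\cdots i_{\sigma(d)}$, whose descent set equals that of $\sigma$ because $i_1<i_2<\cdots<i_d$) yields the refinement
\[
h_S(L)=\sum_{i=1}^{t}\#\bigl\{\sigma\in D_S^d:\bc_\sigma^i\in\Sigma_i\bigr\}.
\]
So it suffices to construct, for each fixed $i$, an injection from $\{\sigma\in D_T^d:\bc_\sigma^i\in\Sigma_i\}$ into $\{\rho\in D_S^d:\bc_\rho^i\in\Sigma_i\}$; summing over $i$ gives $h_T(L)\le h_S(L)$.

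The candidate injection is the dominance map $\phi:D_T^d\to D_S^d$ of Definition~\ref{dominance}, restricted to the relevant subset. Since $\phi$ is injective with image in $D_S^d$, the only thing to verify is that $\phi$ preserves membership in $\Sigma_i$. As $\sigma<_w\phi(\sigma)$ and the weak order is generated by switches, this reduces to the following: \emph{fix $i$; if $\bc_\sigma^i\in\Sigma_i$ and $\tau$ is obtained from $\sigma$ by a single switch, then $\bc_\tau^i\in\Sigma_i$.} This monotonicity lemma is, I expect, where all the real work lies, and it is the place where the geometric-lattice hypothesis is genuinely used.

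To prove the lemma, suppose the switch interchanges positions $k$ and $k+1$, so $\sigma(k)<\sigma(k+1)$. Writing $F_j$ and $F_j'$ for the rank-$j$ flats of $\bc_\sigma^i$ and $\bc_\tau^i$, one checks that $F_j=F_j'$ for every $j\neq k$, while $F_{k-1}=F_{k-1}'\lessdot F_k'=F_{k-1}\vee a_{i_{\sigma(k+1)}}\lessdot F_{k+1}=F_{k+1}'$; thus the two chains agree except at rank $k$, and hence their $\nu$-labels agree at every step except possibly steps $k$ and $k+1$. Since $\bc_\sigma^i\in\Sigma_i$, the $\nu$-labels of $\bc_\sigma^i$ at those steps are $i_{\sigma(k)}$ and $i_{\sigma(k+1)}$. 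Recalling that the $\nu$-label of a cover $x\lessdot y$ is the least atom below $y$ but not below $x$, it remains to show that the $\nu$-labels of $\bc_\tau^i$ at ranks $k$ and $k+1$ are $i_{\sigma(k+1)}$ and $i_{\sigma(k)}$ respectively, which is exactly $\lambda_i(\bc_\tau^i)$; Lemma~\ref{labels} then gives $\bc_\tau^i\in\Sigma_i$. Each of these two verifications amounts to ruling out a strictly smaller atom $a_m$ with the relevant incidences, and I would handle both by a short case split on whether $a_m\le F_k$, using (i) the fact that $\bc_\sigma^i\in\Sigma_i$ pins down the $\nu$-labels of $\bc_\sigma^i$ at steps $k$ and $k+1$, and (ii) semimodularity of $L$: because $F_k\vee F_k'=F_{k-1}\vee a_{i_{\sigma(k)}}\vee a_{i_{\sigma(k+1)}}$ has rank $k+1$ while $F_k$ and $F_k'$ each have rank $k$, one gets $F_k\wedge F_k'=F_{k-1}$, so any atom lying below both $F_k$ and $F_k'$ must already lie below $F_{k-1}$.

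I expect this $\nu$-label computation under a switch — driven by the exchange/semimodularity argument — to be the main obstacle; the rest is bookkeeping with the B\"orner--Wachs formula and the definition of dominance. One small point to state carefully at the outset is that $\nu(\bc_\sigma^i)$ equals $\lambda_i(\bc_\sigma^i)$ only when $\bc_\sigma^i\in\Sigma_i$ (Lemma~\ref{labels}), so all manipulations of descent sets must be carried out on chains that are genuinely new facets; this is precisely why the partition of the maximal chains of $L$ among the $\Sigma_i$ is set up first.
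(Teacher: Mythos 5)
The paper never proves this statement---it is quoted from \cite{ns}---so the natural comparison is with the paper's analogous argument for face posets (Lemma~\ref{switch} and Theorem~\ref{flagh}) and with the Nyman--Swartz machinery recalled in Section~\ref{geolattice}. Your proof is correct and follows the same basic strategy: refine $h_S$ over the ears of the convex-ear decomposition and transport the dominance injection ear by ear using closure under switches. The differences are in the details. Your bookkeeping is the direct one: every maximal chain of $L$ is a facet of exactly one $\Sigma_i$, and Lemma~\ref{labels} together with the Bj\"orner--Wachs interpretation of $\{h_S\}$ gives $h_S=\sum_i\#\{\sigma\in D_S^d:\bc_\sigma^i\in\Sigma_i\}$ with no complementation; the paper, which in Section~\ref{flaghsection} must handle a rank-selected poset where such a partition of maximal chains is not available, instead routes through the Hilbert-series identity for balls (Lemma~\ref{flagstuff}) and the resulting formula $h_S(\Omega\cup\Sigma_k)=h_S(\Omega)+h_{[d-1]-S}(\Sigma_k)$. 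Your switch-closure lemma is proved by a hands-on computation of the two affected $\nu$-labels, and it checks out: one needs $a_{i_{\sigma(k)}}\not\leq F_k'$, that $F_k\wedge F_k'=F_{k-1}$, and that the known labels $\nu(F_{k-1},F_k)=i_{\sigma(k)}$ and $\nu(F_k,F_{k+1})=i_{\sigma(k+1)}$ force any offending smaller atom down below $F_{k-1}$. The paper gets the same closure more formally (cf.\ Lemma~\ref{switch}): if $\bc_\tau^i$ were a chain of some $P_j$ with $j<i$, so would be the subchain obtained by deleting its rank-$k$ element, and property (3) of Theorem~\ref{boolpieces}, verified for geometric lattices in Section~\ref{geolattice}, would place $\Upsilon_{\lambda_i}$ of that subchain---which is $\bc_\sigma^i$, since the switch is applied at an ascent---in an earlier piece, contradicting $\bc_\sigma^i\in\Sigma_i$. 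Substituting that argument for your semimodularity case analysis would shorten the proof, but your version is sound as sketched.
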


We state the main theorem now, but postpone its proof until after Theorem \ref{flagh}.  

\begin{thm}\label{cmflag}  
Let $K$ be a $d$-dimensional Cohen-Macaulay simplicial complex with face poset $P$, and let $\Delta=\Delta(P)$.  Let $S,T\subseteq [d-1]$, and suppose that $S$ dominates $T$.  Then the flag h-vector of $\Delta$ satisfies $h_T\leq h_S$.
\end{thm}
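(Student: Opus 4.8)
The plan is to follow the strategy of Nyman and Swartz \cite{ns} closely, transplanting their geometric-lattice argument to the Cohen-Macaulay setting by means of the convex-ear decomposition of $\Delta((P_\Sigma)_S)$ produced in Section \ref{faceposetsection}. The central object will be, for each $S \subseteq [d-1]$, a vector space $V_S$ with a distinguished basis indexed by the facets of the complex $\Delta((P)_S)$ that arise as ``new'' chains in the convex-ear decomposition — equivalently, by maximal chains of $P$ whose induced labels have descent set $S$ under the $\mathcal{S}_d$-EL-labelings $\lambda_i$. By Theorem \ref{faceposet} and the Bj\"orner--Wachs interpretation of the flag $h$-vector via descent sets, the dimension of the piece of this space coming from ``new'' facets in each polytope boundary $\Sigma_i$ is exactly $h_S$ counted with the right multiplicity, so that $\dim V_S$ records $h_S$. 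First I would set up this bookkeeping carefully: write $P = P_K$ with its maximal elements identified, fix the shelling $F_1,\dots,F_r$ of $K$, the Boolean sublattices $P_i$ and labelings $\lambda_i$ of Theorem \ref{faceposet}, and let $\mathcal{C}_S$ be the set of maximal chains $\bc$ of $P$ such that $\bc$ is a ``new'' facet of $\Sigma_i$ for the (unique) $i$ with $\bc \subseteq P_i$ minimal, and such that the $\lambda_i$-label of $\bc$ has descent set $S$. Then $|\mathcal{C}_S| = h_S$.

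Next I would construct, for a pair $S,T$ with $S$ dominating $T$, an injection $\mathcal{C}_T \hookrightarrow \mathcal{C}_S$. The domination hypothesis gives an injection $\phi\colon D_T^d \to D_S^d$ with $\sigma <_w \phi(\sigma)$. Inside each Boolean sublattice $P_i$, a maximal chain is determined by a permutation of the label alphabet of $P_i$, and ``a sequence of switches'' $\sigma <_w \tau$ corresponds to a sequence of elementary chain moves that (by the defining property of a shelling and Fact \ref{easyfact}) keeps us among facets whose highest non-top element still contains the minimal new face $r(F_i)$ — so the ``new in $P_i$'' property is preserved under these moves, exactly as Lemma \ref{labels} is used in the geometric-lattice proof. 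The key technical lemma to isolate and prove is therefore: \emph{if $\bc$ is a new facet of $\Sigma_i$ with $\lambda_i$-label $\sigma$, and $\tau = \phi(\sigma)$, then the maximal chain $\bc'$ of $P_i$ with $\lambda_i$-label $\tau$ is again a new facet of some $\Sigma_j$ with $j\le i$}, and that the resulting map $\bc \mapsto \bc'$ is injective on $\mathcal{C}_T$. Injectivity should come from the injectivity of $\phi$ together with the fact that distinct new facets of the whole decomposition are distinct chains of $P$; here one must be slightly careful because a chain new in $P_i$ might, after the switch, land in an earlier $P_j$, but the descent-set/label data still pins it down.

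Alternatively — and this may be the cleaner route to write up — I would phrase everything in terms of the Stanley--Reisner ring of $\Delta = \Delta(P)$, as the section's preamble hints. The flag $h$-vector entry $h_S$ is the coefficient extracted from the fine-graded Hilbert series of $k[\Delta]$ (with the $\mathbb{Z}^{d}$-grading by rank), and Cohen-Macaulayness lets us pass to an Artinian reduction whose Hilbert function in multidegree supported on $S$ equals $h_S$. Domination then yields an injective linear map between the relevant graded pieces, built monomial-by-monomial from $\phi$ and from the order-preserving property of weak-order switches on monomials; the convex-ear decomposition is what guarantees the requisite monomials are honestly present (no unexpected Stanley--Reisner relations kill them), playing the role that the nbc-basis / $\Sigma_i$ structure plays in \cite{ns}. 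I would keep this as a parallel track and choose whichever makes the ``switches preserve newness'' step most transparent.

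The main obstacle I anticipate is precisely that step: verifying that a weak-order switch applied to a new facet of $\Sigma_i$ yields another new facet (possibly of an earlier $\Sigma_j$), i.e. that the combinatorics of ``minimal new face of $F_i$'' interacts correctly with switches of the label word. In the geometric-lattice case this is Lemma \ref{labels} plus properties of nbc-bases; here it must be deduced from Fact \ref{easyfact} and the shelling axiom (Definition \ref{altshell}), and the bookkeeping of which $P_j$ a chain first appears in makes the argument more delicate. A secondary difficulty is ensuring the collection of injections $\mathcal{C}_T \hookrightarrow \mathcal{C}_S$ assembled across all the $P_i$ is globally injective — i.e. that two different source chains, living in different Boolean sublattices, cannot be sent to the same target chain — which I expect to resolve by noting that the target chain together with its descent data determines both the sublattice in which it is ``new'' and its preimage permutation, hence (via injectivity of $\phi$) its source. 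Once the key lemma is in hand, the inequality $h_T = |\mathcal{C}_T| \le |\mathcal{C}_S| = h_S$ follows immediately.
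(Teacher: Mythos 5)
There is a genuine gap, and it is at the very first step: your entire construction is built on ``fix the shelling $F_1,\dots,F_r$ of $K$'' and on the convex-ear decomposition of Theorem \ref{faceposet}, but the hypothesis of Theorem \ref{cmflag} is only that $K$ is Cohen--Macaulay. A Cohen--Macaulay complex need not be shellable, so there is no shelling of $K$ to fix, Theorem \ref{faceposet} does not apply to $K$, and the sets $\mathcal{C}_S$ of ``new'' facets of the ears $\Sigma_i$ are not defined. What your argument would prove (if the ``switches preserve newness'' lemma is carried out) is essentially the shellable case, which is the paper's Theorem \ref{flagh}; indeed your injection strategy within each Boolean piece is close in spirit to Lemma \ref{switch}, although the paper routes the bookkeeping through the Hilbert-series identity of Lemma \ref{flagstuff} for each ball $\Sigma_k$ to get $h_S(\Omega\cup\Sigma_k)=h_S(\Omega)+h_{[d-1]-S}(\Sigma_k)$, rather than through a single global count of new chains. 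Your alternative Stanley--Reisner track has the same problem: Cohen--Macaulayness by itself does not supply the convex-ear/``new facet'' structure you invoke to guarantee the relevant monomials survive.

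The missing idea is the paper's reduction from the Cohen--Macaulay case to the shellable case. Because $P$ is the face poset of $K$, each flag $f$-vector entry $f_S(\Delta(P))$ counts chains of faces of $K$ with prescribed dimensions and is therefore a linear function of the $f$-vector of $K$ alone; hence each flag $h$-vector entry of $\Delta(P)$, and in particular the inequality $h_T\leq h_S$, depends only on the $h$-vector of $K$. Since every $h$-vector of a Cohen--Macaulay complex is also the $h$-vector of some shellable complex (Stanley's characterization, see \cite{greenstanley}), one may replace $K$ by a shellable complex with the same $h$-vector without changing the flag $h$-vector of $\Delta(P)$, and then quote Theorem \ref{flagh}. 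Without this transfer step your proposal cannot reach the stated generality; with it, the hard combinatorial work you describe is only needed in the shellable setting, where it is exactly Lemma \ref{switch} plus the $h_{[d-1]-S}$ accounting for the ears.
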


\begin{lem}\label{flagstuff}
Let $P$ be a rank-$d$ poset with a $\hat{0}$ and $\hat{1}$ whose order complex $\Delta = \Delta(P)$ is a ball.  Let $\Delta'$ be the set of faces in $\Delta - \partial \Delta - \emptyset$, let $\{f'_S\}$ be the flag f-vector of $\Delta'$, and let $\{h_S\}$ be the flag h-vector of $\Delta$.  Then
\[
\sum_{S\subseteq[d-1]}f_S'\prod_{i\notin S}(\nu_i-1)=\sum_{S\subseteq[d-1]}h_{[d-1]- S}\prod_{i\notin S}\nu_i
\]
\end{lem}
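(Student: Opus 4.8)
The plan is to recognize each side of the identity as (the numerator of) a finely graded Hilbert series — one attached to the Stanley--Reisner ring of $\Delta$, the other to its canonical module — and then to read off the asserted equality from graded local duality.

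First I would fix the grading. Since $P$ is ranked of rank $d$, sending each $v\in P-\{\hat{0},\hat{1}\}$ to $\mathrm{rk}(v)\in[d-1]$ is a proper coloring of $\Delta=\Delta(P)$, so $\Delta$ is a $(d-2)$-dimensional balanced complex on the color set $[d-1]$. Grade the Stanley--Reisner ring $k[\Delta]$ over $\mathbb{Z}^{d-1}$ by $\deg x_v=e_{\mathrm{rk}(v)}$, and write $\nu=(\nu_1,\dots,\nu_{d-1})$, with $\nu^{-1}$ denoting the tuple of inverses. Because $\Delta$ is balanced, its fine Hilbert series takes the familiar form
\[
F(k[\Delta];\nu)=\sum_{S\subseteq[d-1]}f_S\prod_{i\in S}\frac{\nu_i}{1-\nu_i},
\]
with $\{f_S\}$ the flag $f$-vector of $P$. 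Multiplying through by $\prod_{i=1}^{d-1}(1-\nu_i)$ and substituting $f_S=\sum_{T\subseteq S}h_T$, the sum over $S$ collapses, via $\sum_{A\subseteq U}\prod_{i\in A}\nu_i\prod_{i\in U\setminus A}(1-\nu_i)=1$, to the flag $h$-polynomial:
\[
F(k[\Delta];\nu)\cdot\prod_{i=1}^{d-1}(1-\nu_i)=\sum_{T\subseteq[d-1]}h_T\prod_{i\in T}\nu_i=\sum_{S\subseteq[d-1]}h_{[d-1]\setminus S}\prod_{i\notin S}\nu_i,
\]
which is exactly the right-hand side of the lemma.

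Next I would treat the left-hand side via the canonical module. A ball is Cohen--Macaulay, so $k[\Delta]$ is Cohen--Macaulay of Krull dimension $d-1$, and by the standard combinatorial description of the canonical module of a Cohen--Macaulay Stanley--Reisner ring — which, when the complex is a ball, carries no twist of degrees — the canonical module $\omega=\omega_{k[\Delta]}$ is, as a $\mathbb{Z}^{d-1}$-graded module, the $k$-span of the monomials $x^a$ with $\mathrm{supp}(a)\in\Delta'$. Hence
\[
F(\omega;\nu)=\sum_{S\subseteq[d-1]}f'_S\prod_{i\in S}\frac{\nu_i}{1-\nu_i}.
\]
Replacing each $\nu_i$ by $\nu_i^{-1}$ turns $\frac{\nu_i}{1-\nu_i}$ into $\frac{1}{\nu_i-1}$, so multiplying by $\prod_{i=1}^{d-1}(\nu_i-1)$ yields
\[
F(\omega;\nu^{-1})\cdot\prod_{i=1}^{d-1}(\nu_i-1)=\sum_{S\subseteq[d-1]}f'_S\prod_{i\notin S}(\nu_i-1),
\]
which is exactly the left-hand side of the lemma.

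Putting the two computations together, the lemma is equivalent to $F(\omega;\nu^{-1})\prod_i(\nu_i-1)=F(k[\Delta];\nu)\prod_i(1-\nu_i)$; since $\prod_i(\nu_i-1)=(-1)^{d-1}\prod_i(1-\nu_i)$, after the substitution $\nu\mapsto\nu^{-1}$ this becomes $F(\omega;\nu)=(-1)^{d-1}F(k[\Delta];\nu^{-1})$, which is precisely graded local duality for the Cohen--Macaulay $\mathbb{Z}^{d-1}$-graded ring $k[\Delta]$ of Krull dimension $d-1$. The substantive points, as opposed to routine rational-function bookkeeping, are the two standard inputs: that the fine $\mathbb{Z}^{d-1}$-grading really produces the two Hilbert-series shapes above (this is where balancedness of $\Delta$ is used), and the precise form of graded local duality in this nonstandard grading — in particular the sign $(-1)^{d-1}$ and the fact that for a ball the canonical module is the interior-face module on the nose, with no shift of degrees.
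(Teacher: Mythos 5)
Your argument is correct and is essentially the paper's proof: both sides are identified as numerators of the rank-specialized (flag) Hilbert series of $k[\Delta]$, with the interior-face sum tied to $F(k[\Delta],1/\nu)$ up to the sign $(-1)^{d-1}$. The only difference is one of packaging — where you invoke the canonical-module description of a ball together with graded local duality, the paper cites Stanley's Corollary II.7.2 and uses $\tilde{\chi}(\Delta)=0$, which is the same duality statement in prepackaged form.
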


\begin{proof}
Under the fine grading of the face ring $k[\Delta]$, $F(k[\Delta],\lambda)=\sum_{F\in \Delta}\prod_{x_i\in F}\frac{\lambda_i}{1-\lambda_i}$.  We specialize this grading to accommodate the flag h-vector as follows:  identify $\lambda_i$ and $\lambda_j$ whenever the vertices in $\Delta$ to which they correspond have the same rank $r$ (as elements of $P$).  Call this new variable $\nu_r$.  This specialized grading yields: 
\[
F(k[\Delta],\nu)=\sum_{S\subseteq [d-1]}  f_S\prod_{i\in S}\frac{\nu_i}{1-\nu_i}
\]
We put this over the common denominator of $\prod_{i\in [d-1]}(1-\nu_i)$ to obtain:  
\begin{equation}\label{eq}
F(k[\Delta],\nu)=\sum_{S\subseteq [d-1]}\frac{f_S \prod_{i\in S}\nu_i\prod_{i\notin S}(1-\nu_i)}{\prod_{i\in [d-1]}(1-\nu_i)}=\sum_{S\subseteq [d-1]}\frac{h_S\prod_{i\in S}\nu_i}{\prod_{i\in[d-1]}(1-\nu_i)}
\end{equation}
The following equation is Corollary II.7.2 from \cite{greenstanley} (note that $\Delta$ is $(d-2)$-dimensional):
\[
(-1)^{d-1}F(k[\Delta],1/\lambda)=(-1)^{d-2}\tilde{\chi}(\Delta)+\sum_{F\in \Delta'}\prod_{x_i\in F}\frac{\lambda_i}{1-\lambda_i}
\]
Noting that $\tilde{\chi}(\Delta)=0$, plugging in $1/\lambda$ in place of $\lambda$, and specializing to the $\nu$-grading, the previous expression becomes:
\[
(-1)^{d-1}F(k[\Delta],\nu)=\sum_{S\subseteq [d-1]}f_S'\prod_{i\in S}\frac{1}{\nu_i-1}
\]
Putting the above over the common denominator of $\prod_{i\in [d-1]}(\nu_i-1)$ and multiplying by $(-1)^{d-1}$ gives us:
\[
F(k[\Delta],\nu)=\sum_{S\subseteq [d-1]}\frac{f_S'\prod_{i\notin S}(\nu_i-1)}{\prod_{i \in [d-1]}(1-\nu_i)}
\]
Comparing this with Equation \eqref{eq} and noting that the denominators are equal, we have
\[
\sum_{S\subseteq[d-1]}f_S'\prod_{i\notin S}(\nu_i-1) = \sum_{S\subseteq[d-1]}h_S\prod_{i\in S}\nu_i= \sum_{S\subseteq[d-1]}h_{[d-1]- S}\prod_{i\notin S}\nu_i
\]
which proves the result.
\end{proof}

Now let $\Sigma$ be a $(d-1)$-dimensional shellable complex with face poset $P_\Sigma$ and shelling order $F_1, F_2, \ldots,  F_t$, and for each $i$ let $P_i$ be the face poset of $F_i$.  Let $A=[d-1]$, and set $\Delta = \Delta((P_\Sigma)_A)$.  Note that $\Delta$ is simply the order complex of $P_\Sigma$ once we remove the elements corresponding to the facets of $\Sigma$.  Let $\Sigma_1,\Sigma_2,\ldots,\Sigma_t$ be the c.e.d. of $\Delta$ given by Theorem \ref{faceposet}.  

\begin{lem}\label{switch}  
Fix some $\Sigma_i$, and let $\lambda_i$ be the labeling of $P_i$ constructed in the proof of Theorem \ref{faceposet}.  Let $S,T\subseteq [d-1]$, and suppose $S$ dominates $T$.  Then there are at least as many maximal chains in $\Sigma_i$ whose $\lambda_i$-labels have descent set $S$ as chains whose $\lambda_i$-labels have descent set $T$.  
\end{lem}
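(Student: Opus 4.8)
The plan is to describe the facets of $\Sigma_i$ explicitly as the permutations in $\mathcal{S}_d$ with a restricted last letter, and then to deduce the inequality from the dominance injection together with the observation that moving up in the weak order never increases the last letter of a permutation.

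Write $k = |r(F_i)|$, and recall that $\lambda_i = \lambda_\phi$ for a bijection $\phi : F_i \to [d]$ that labels the vertices of $r(F_i)$ last, so that $\phi$ assigns those vertices the $k$ largest values $\{d-k+1, \ldots, d\}$. First I would identify the facets of $\Sigma_i$. Since $A = [d-1]$ selects all ranks, a maximal chain of $\Delta((P_i)_A)$ is a full flag $x_1 \subsetneq x_2 \subsetneq \cdots \subsetneq x_{d-1}$ of proper nonempty faces of $F_i$ with $\dim x_j = j-1$; by the way the pieces are built in the proof of Theorem~\ref{faceposet}, such a flag is a facet of $\Sigma_i$ precisely when it is not a chain of any $\Delta((P_j)_A)$ with $j<i$, and by Fact~\ref{easyfact} (applied with $x = x_{d-1}$) this happens if and only if $x_{d-1} \supseteq r(F_i)$. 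Recording the order $v_1, v_2, \ldots, v_d$ in which the vertices of $F_i$ are adjoined along the chain, the $\lambda_i$-label of the chain is the permutation $\sigma = \phi(v_1)\phi(v_2)\cdots\phi(v_d)$; since $x_{d-1} = F_i \setminus \{v_d\}$, the condition $x_{d-1} \supseteq r(F_i)$ is equivalent to $v_d \notin r(F_i)$, that is, to $\sigma(d) \leq d - k$. Thus, for any $U \subseteq [d-1]$, the facets of $\Sigma_i$ whose $\lambda_i$-label has descent set $U$ are in bijection with $\{\sigma \in D_U^d : \sigma(d) \leq d-k\}$, and the lemma reduces to the inequality
\[
\bigl| \{\sigma \in D_S^d : \sigma(d) \leq d-k\} \bigr| \;\geq\; \bigl| \{\sigma \in D_T^d : \sigma(d) \leq d-k\} \bigr|.
\]

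To prove this, let $\iota : D_T^d \to D_S^d$ be an injection with $\sigma <_w \iota(\sigma)$ for every $\sigma \in D_T^d$, as furnished by Definition~\ref{dominance}. The key point is that a single switch never increases the last letter of a permutation: a switch interchanging $\sigma(a)$ and $\sigma(a+1)$ with $a < d-1$ leaves the last letter unchanged, while a switch with $a = d-1$ is permitted only when $\sigma(d-1) < \sigma(d)$, and it replaces $\sigma(d)$ by the strictly smaller value $\sigma(d-1)$. Iterating over a sequence of switches, $\sigma <_w \tau$ forces $\tau(d) \leq \sigma(d)$; in particular $\iota(\sigma)(d) \leq \sigma(d) \leq d-k$ whenever $\sigma(d) \leq d-k$. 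Hence $\iota$ restricts to an injection from $\{\sigma \in D_T^d : \sigma(d) \leq d-k\}$ into $\{\sigma \in D_S^d : \sigma(d) \leq d-k\}$, which establishes the displayed inequality, and with it the lemma.

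I expect the main obstacle to be the bookkeeping in the middle paragraph: extracting from Fact~\ref{easyfact} exactly which full flags remain as facets of $\Sigma_i$, and then translating "$x_{d-1} \supseteq r(F_i)$" into the condition $\sigma(d) \leq d-k$ on the associated permutation, keeping straight that $\phi$ sends the vertices of $r(F_i)$ to the top $k$ values of $[d]$. Once this dictionary between facets of $\Sigma_i$ and permutations with a restricted last letter is in place, the weak-order monotonicity of the last coordinate is elementary and finishes the proof.
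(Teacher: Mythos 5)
Your proof is correct, and it reaches the inequality by a somewhat different route than the paper. Both arguments end the same way, by feeding the dominance injection of Definition \ref{dominance} through a weak-order monotonicity statement about the labels of facets of $\Sigma_i$; the difference is in how that statement is obtained. The paper never computes the facet set of $\Sigma_i$: it shows abstractly that this set of labels is an up-set in the weak order, by arguing that if a chain $\bc$ of $\Sigma_i$ has an ascent at rank $j$, then the chain $\bc'$ obtained by switching at rank $j$ must also lie in $\Sigma_i$, since otherwise $\bc = \Upsilon_{\lambda_i}(\bc' - x)$ would lie in an earlier piece (this uses property (3) of Theorem \ref{boolpieces}, i.e.\ the $\Upsilon$-closure built into the decomposition). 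You instead identify the facets of $\Sigma_i$ explicitly: using Fact \ref{easyfact} and the choice of $\phi$ labeling $r(F_i)$ last, the facets with descent set $U$ correspond exactly to $\{\sigma \in D_U^d : \sigma(d) \leq d-k\}$ with $k = |r(F_i)|$, and then the elementary observation that a switch never increases the last letter shows this set is preserved under the dominance injection. Your version is more self-contained (it does not invoke the $\Upsilon$-closure machinery) and as a bonus gives the exact count of facets of $\Sigma_i$ in each descent class, hence an explicit formula for $h_{[d-1]-S}(\Sigma_k)$ in Theorem \ref{flagh}; the paper's argument is shorter and is the one that transfers to settings (such as the geometric-lattice decomposition of Nyman and Swartz) where the ears do not admit such a clean last-letter description. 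Your identification of the facets does rely on the fact that, since $A=[d-1]$, each Boolean piece $P_i$ contributes a single ear whose facets are precisely the full flags not lying in any earlier $P_j$; this is true here and you use it correctly, but it is worth flagging that it comes from the construction in Theorem \ref{boolpieces} (only one maximal chain of $B_d$ has full descent set), not merely from the statement of Theorem \ref{faceposet}.
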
 

\begin{proof}
Let $\bc$ be a maximal chain in $\Sigma_i$ with $\sigma = \lambda_i(\bc)$, and let $\sigma(j) < \sigma(j+1)$ be an ascent of its $\lambda_i$-label.  Let $\bc'$ is the unique maximal chain of $P_i$ that coincides with $\bc$ at every rank but $j$, and let $x$ be the element of $\bc'$ of rank $j$.  Then $\bc'$ must be a chain in $\Sigma_i$, since otherwise $\bc = \Upsilon_{\lambda_i}(\bc' - x)$ would not be a chain in $\Sigma_i$.  So if $\tau$ is a permutation preceded by $\sigma$ in the weak order, there is some maximal chain in $\Sigma_i$ with $\tau$ as its $\lambda_i$-label.  Now suppose $\sigma$ has descent set $T$.  If $\phi$ is as in Definition \ref{dominance}, it follows that $\Sigma_i$ contains a chain whose $\lambda_i$-label is $\phi(\sigma)$.
\end{proof}

\begin{thm} \label{flagh}
Let $\Sigma$ and $\Delta$ be as above, let $S,T\subseteq[d-1]$, and suppose that $S$ dominates $T$.  Then the flag h-vector of $\Delta$ satisfies $h_T\leq h_S$.
\end{thm}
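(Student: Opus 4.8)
The plan is to exploit the convex-ear decomposition $\Sigma_1,\ldots,\Sigma_t$ of $\Delta$ (here $\Sigma_1=\Delta(P_1)$ is a sphere and $\Sigma_2,\ldots,\Sigma_t$ are balls) and to reduce $h_S(\Delta)$ to a sum of one local count per ear, each controlled by Lemma \ref{switch}. Writing $m_i(S)$ for the number of facets $\be$ of $\Sigma_i$ whose $\lambda_i$-label has descent set $S$ — equivalently, the number of maximal chains of $P_i$ lying in no $P_j$ with $j<i$ and having that descent set — I aim to prove the identity
\[
h_S(\Delta)=|D_S^d|+\sum_{i=2}^{t} m_i(S).
\]
Granting this, the theorem follows at once: when $S$ dominates $T$, the injection of Definition \ref{dominance} gives $|D_T^d|\leq|D_S^d|$, and Lemma \ref{switch} gives $m_i(T)\leq m_i(S)$ for every $i\geq 2$; summing yields $h_T(\Delta)\leq h_S(\Delta)$.

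To prove the identity, first use the c.e.d.\ structure. By property (iv) of Definition \ref{ced} the faces of $\Delta$ are partitioned as $\Sigma_1\sqcup(\Sigma_2-\partial\Sigma_2)\sqcup\cdots\sqcup(\Sigma_t-\partial\Sigma_t)$ (a face lies in $\Sigma_{i_0}$ for a least index $i_0$, hence not in $\partial\Sigma_{i_0}\subseteq\bigcup_{j<i_0}\Sigma_j$; disjointness is argued the same way). So the flag $f$-vectors add, and applying the linear $h$-transform gives $h_S(\Delta)=h_S(\Sigma_1)+\sum_{i\geq 2}h_S'(\Sigma_i')$, where $\Sigma_i'=\Sigma_i-\partial\Sigma_i-\emptyset$ and $h_S'(\Sigma_i')$ is the alternating sum of its flag $f$-numbers. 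Since $\Sigma_1=\Delta(P_1)$ with $P_1\cong B_d$, the theorem of Bj\"orner and Wachs identifying $h_S$ with the number of maximal chains of descent set $S$, applied to the standard $\mathcal{S}_d$-EL-labeling, gives $h_S(\Sigma_1)=|D_S^d|$, matching the first term.

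It remains to see $h_S'(\Sigma_i')=m_i(S)$ for $i\geq 2$. Here $\Sigma_i$ is itself an order complex: $\Sigma_i=\Delta(Q_i)$, where $Q_i$ is the subposet of $P_i$ on $\hat 0$, $\hat 1$, and the elements lying on some new maximal chain. One checks $Q_i$ is a rank-$d$ poset with $\hat 0$ and $\hat 1$ whose order complex is the ball $\Sigma_i$ (under $P_i\cong B_d$ it is $B_d$ with a principal filter at one rank-$(d-|r(F_i)|)$ element deleted and $\hat 1$ reattached). Thus Lemma \ref{flagstuff} applies to $Q_i$ and yields $h_S'(\Sigma_i')=h_{[d-1]-S}(\Sigma_i)$. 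Finally, $\Sigma_i$ is shellable via the shelling of Lemma \ref{sigmashell}, and for any shelling of a balanced shellable complex $h_X$ equals the number of facets whose restriction face has rank set $X$; a direct computation with that (reverse-lexicographic) shelling shows the restriction face of a new maximal chain $\be$ is exactly the subchain of $\be$ at the ranks in the ascent set of $\lambda_i(\be)$, so its rank set is $[d-1]$ minus the descent set of $\lambda_i(\be)$. Hence $h_{[d-1]-S}(\Sigma_i)=m_i(S)$, completing the identity.

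I expect this last step to be the main obstacle: one must keep careful track of the two order-complementations that enter — the reciprocity of Lemma \ref{flagstuff}, which turns $S$ into $[d-1]-S$, and the descent/ascent interchange coming from the shelling restriction faces — and verify that in the shelling of Lemma \ref{sigmashell} the restriction face of $\be$ is picked out by precisely the ascents of $\lambda_i(\be)$, so that the two complementations cancel and the clean count $m_i(S)$ emerges.
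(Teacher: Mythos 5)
Your proposal is correct and follows essentially the same route as the paper: the paper inducts over the ears using additivity of flag $f$-vectors, applies Lemma \ref{flagstuff} to each ball $\Sigma_k$ to get $h_S(\Omega\cup\Sigma_k)=h_S(\Omega)+h_{[d-1]-S}(\Sigma_k)$, identifies $h_{[d-1]-S}(\Sigma_k)$ with the number of facets of $\Sigma_k$ whose $\lambda_k$-label has descent set $S$ via the reverse-lexicographic shelling, and concludes with Lemma \ref{switch}. Your unrolled identity $h_S(\Delta)=|D_S^d|+\sum_{i\geq 2}m_i(S)$ is just the closed form of that induction, and the ascent/descent bookkeeping you flag as the main obstacle is exactly the step the paper handles (equally briefly) with the same shelling-restriction argument.
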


\begin{proof}
First, note that $h_T(\Sigma_1)\leq h_S(\Sigma_1)$, since the poset associated to $\Sigma_1$ is just the Boolean lattice $B_d$.  Now let $\Omega=\Sigma_1\cup\Sigma_2 \cup \cdots \cup \Sigma_{k-1}$ and suppose the result holds for $\Omega$.  Let $\Sigma_k'=\Sigma_k-\partial\Sigma_k-\emptyset$.  Because $\Sigma_k$ triangulates a ball, we can now use our earlier expression for the flag h-vector of a ball and invoke an argument similar to Chari's in \cite{ch}:
\begin{align*} 
\sum_{S\subseteq [d-1]}h_S(\Omega\cup\Sigma_k)\prod_{i\notin S}\nu_i&=\sum_{S\subseteq [d-1]}f_S(\Omega\cup\Sigma_k)\prod_{i\notin S}(\nu_i-1) \\
&=\sum_{S\subseteq [d-1]}f_S(\Omega)\prod_{i\notin S}(\nu_i-1)+\sum_{S\subseteq [d-1]}f_S(\Sigma_k')\prod_{i\notin S}(\nu_i-1)\\
&=\sum_{S\subseteq [d-1]}h_S(\Omega)\prod_{i\notin S}\nu_i+\sum_{S\subseteq [d-1]}h_{[d-1]-S}(\Sigma_k)\prod_{i\notin S}\nu_i \text{ (by Lemma \ref{flagstuff})}\\
&=\sum_{S\subseteq [d-1]}(h_S(\Omega)+h_{[d-1]-S}(\Sigma_k))\prod_{i\notin S}\nu_i
\end{align*}
So for all subsets $S \subseteq [d-1]$, 
\[
h_S(\Omega \cup \Sigma_k) =  h_S(\Omega) + h_{[d-1]-S}(\Sigma_k)
\]

Because reverse lexicographic order of the maximal chains of $\Sigma_k$ is a shelling (Theorem \ref{boolpieces}), $h_S(\Sigma_k)$ counts the number of maximal chains of $\Sigma_k$ whose labels have \emph{ascent} set $S$, and so $h_{[d-1]-S}(\Sigma_k)$ counts the number of maximal chains in $P_k$ with descent set $S$.  The result now follows from Lemma \ref{switch}.
\end{proof}

\begin{proof}[Proof of Theorem \ref{cmflag}]
Because $P$ is a face poset, a linear inequality of its flag h-vector translates to a linear inequality of the h-vector of $K$.  Since every h-vector of a Cohen-Macaulay complex is the h-vector of some shellable complex (see \cite{greenstanley}), the result follows.
\end{proof}

We now show that Theorem \ref{cmflag} cannot be extended to include posets whose order complexes are Cohen-Macaulay (or 2-CM, for that matter).  First recall that a graded poset $P$ is \emph{Eulerian} if its M\"obius function satisfies $\mu(x,y)=(-1)^{\text{rank}(y)- \text{rank}(x)}$ for all $x < y$.  An Eulerian poset whose order complex is Cohen-Macaulay is called \emph{Gorenstein*}.  It can be shown that the order complex of a Gorenstein* poset is 2-Cohen-Macaulay.  For $S\subseteq [d-1]$, define $w(S)$ to be the set of all $i\in [n]$ such that exactly one of $i$ and $i+1$ is in $S$.  For instance, if $S=\{2,3\}\subseteq [4]$ then $w(S)=\{1,3\}$.  Since Conjecture 2.3 from \cite{st4} was proven by Karu in \cite{ka}, we can rephrase Proposition 2.8 from \cite{st4} as:

\begin{prop}[\cite{st4}]  
If $S,T\subseteq [d-1]$ are such that $h_T(\Delta)\leq h_S(\Delta)$ whenever $\Delta$ is the order complex of a Gorenstein* poset, then $w(T)\subseteq w(S)$.  
\end{prop}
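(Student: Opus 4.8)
The plan is to pass to the $cd$-index of the rank-$d$ Gorenstein* poset, translate the flag $h$-inequality into a comparison of $cd$-coefficients, and then combine the nonnegativity theorem of Karu (the now-established Conjecture~2.3 of \cite{st4}) with an explicit family of polytopes showing that the inequalities obtained in this way are the only ones.

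First I would recall that for an Eulerian poset $P$ of rank $d$ the flag enumerator $\sum_{S\subseteq[d-1]}h_S u_S$ (with $u_S$ the $\{a,b\}$-word having $b$ in position $i$ iff $i\in S$) can be written as a polynomial $\Phi_P=\sum_R\Phi_R(P)\,M_R$ in $c=a+b$ and $d=ab+ba$, the sum running over the \emph{sparse} subsets $R\subseteq[d-2]$, where $M_R$ is the $cd$-monomial whose $d$'s occupy the pairs $\{i,i+1\}$ for $i\in R$. The one elementary computation needed: when $M_R$ is expanded in the $\{a,b\}$-basis, the coefficient of $u_S$ is $1$ if every $i\in R$ has exactly one of $i,i+1$ in $S$ — that is, if $R\subseteq w(S)$ — and is $0$ otherwise, since a factor $c$ in a position imposes no constraint on $S$. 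Hence
\[
h_S(P)=\sum_{\substack{R\ \mathrm{sparse}\\ R\subseteq w(S)}}\Phi_R(P),\qquad\text{so}\qquad h_S(P)-h_T(P)=\sum_R\Phi_R(P)\bigl(\mathbf 1[R\subseteq w(S)]-\mathbf 1[R\subseteq w(T)]\bigr).
\]
By Karu's theorem all $\Phi_R(\Delta)\ge 0$ when $\Delta$ is Gorenstein*, so if $w(T)\subseteq w(S)$ then every term on the right is nonnegative and $h_T\le h_S$; thus the Proposition is exactly the statement that these are the only provable inequalities. For the contrapositive, suppose $w(T)\not\subseteq w(S)$ and pick $i_0\in w(T)\setminus w(S)$ (which may be taken to lie in $[d-2]$). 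Then $R_0=\{i_0\}$ is sparse, it carries coefficient $-1$ in the displayed difference, the term $\emptyset$ carries $0$, and so it suffices to produce one Gorenstein* poset $\Delta$ for which $\Phi_{R_0}(\Delta)$ exceeds $\sum_R\Phi_R(\Delta)$ over the finitely many sparse $R$ with $R\subseteq w(S)$, $R\not\subseteq w(T)$. I would build $\Delta$ as the order complex (barycentric subdivision) of the face lattice of a $(d-1)$-polytope assembled from a large $m$-gon by a fixed sequence of prism and pyramid operations positioned so as to feed the $M_{R_0}$-coordinate; using the standard recursive formulas for the $cd$-index under prism and pyramid, one checks that the $M_{R_0}$-coefficient can be made to dominate all the competing coefficients as $m\to\infty$, forcing $h_T(\Delta)>h_S(\Delta)$. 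In low rank this is already explicit: the $m$-gon has $cd$-index $c^2+(m-2)d$, the $m$-gonal prism has $c^3+m\,cd+(2m-2)\,dc$, and the $m$-gonal bipyramid has $c^3+(2m-2)\,cd+m\,dc$, which settle $d\le 4$.

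I expect this last realization step to be the crux. The obstacle is that, unlike the one-line ``plug in an $m$-gon'' argument available for small $d$, no polytope — indeed no Gorenstein* poset — has $cd$-index supported on a single nontrivial monomial (the coefficient of $c^{d-1}$ is forced to equal $1$, and by lower-bound phenomena several further coefficients are forced to be positive), so one cannot isolate $\Phi_{R_0}$ and must instead track how the prism/pyramid operations redistribute mass among all of the $cd$-coefficients and verify the desired asymptotic domination against whatever set of competing coefficients the pair $(S,T)$ produces. This is precisely the portion of Stanley's Proposition~2.8 that was contingent on his nonnegativity conjecture, and it becomes unconditional exactly because Karu proved that conjecture; the $cd$-index change of basis and the reduction of the flag inequality to a coefficientwise comparison are routine bookkeeping by comparison.
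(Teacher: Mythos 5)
There is nothing in the paper to compare against: the statement is quoted from Proposition 2.8 of \cite{st4} (made unconditional by Karu's theorem) and the paper supplies no proof, so your argument has to stand on its own. Its first half does: the identity $h_S=\sum_{R}\Phi_R$, summed over sparse $R\subseteq[d-2]$ with $R\subseteq w(S)$, is correct, and it correctly reduces the contrapositive to producing, for $i_0\in w(T)\setminus w(S)$, a rank-$d$ Gorenstein* poset in which $\Phi_{\{i_0\}}$ exceeds $\sum\{\Phi_R: R\subseteq w(S),\ R\not\subseteq w(T)\}$. But that realization step, which you yourself identify as the crux, is left as a plan (``one checks \dots''), and it is exactly where all the content of the proposition sits, so the proof is incomplete. (A secondary point: your parenthetical that $i_0$ ``may be taken to lie in $[d-2]$'' is doing real work. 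If $w(S)$ is read as a subset of $[d-1]$, the claim is false and so is the proposition as literally stated --- e.g.\ for $d=5$, $S=\{2\}$, $T=\{3,4\}$ one has $w(T)\setminus w(S)=\{4\}$, yet $h_T-h_S=-\Phi_{\{1\}}\le 0$ on every Gorenstein* poset; the statement is only correct with $w$ truncated to $[d-2]$, and your reduction needs to address this.)

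More seriously, the route you propose for the crux cannot succeed: prisms and pyramids of polytopes are polytopes, so every candidate you construct is the face lattice of a $(d-1)$-polytope, and polytopes satisfy flag inequalities beyond nonnegativity of the cd-index that block precisely the inequalities you need. Take the very pair the paper uses, $S=\{1,2\}$, $T=\{1\}$, $d=5$: here $h_{\{1\}}=f_{\{1\}}-1$ and $h_{\{1,2\}}=f_{\{1,2\}}-f_{\{1\}}-f_{\{2\}}+1=f_{\{2\}}-f_{\{1\}}+1$, so a witness to $h_T>h_S$ must have $f_{\{2\}}<2f_{\{1\}}-2$, i.e.\ fewer than $2V-2$ edges on $V$ vertices; but every vertex of a $4$-polytope lies on at least $4$ edges, so $f_{\{2\}}\ge 2f_{\{1\}}$ for every $4$-polytope (and the same obstruction persists in all ranks $d\ge5$). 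Similarly, for $S=\{1,4\}$, $T=\{3,4\}$ a witness needs $\Phi_{cdc}>\Phi_{dc^2}+\Phi_{c^2d}+\Phi_{d^2}$, which is impossible whenever every edge lies in at least three $2$-faces and every $2$-face has at least three vertices --- in particular for all $4$-polytopes. So the required examples must be genuinely non-polytopal Gorenstein* posets (regular CW spheres with digon-type cells or vertices of degree $2$, obtainable for instance by subdividing edges or splitting $2$-cells of a CW sphere), and no amount of asymptotic bookkeeping with prisms and pyramids over an $m$-gon will produce them; your explicit examples genuinely settle only rank $\le 4$, which is the only range where polytopes suffice.
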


Now consider $S,T\subseteq [4]$ given by $S=\{1,2\}$ and $T=\{1\}$.  In \cite{ns}, it is shown that $S$ dominates $T$.  However, $w(S)=\{2\}$ and $w(T)=\{1\}$, so $w(T)\nsubseteq w(S)$ and it is clear that we cannot extend Theorem \ref{cmflag} to include the wider class of Cohen-Macaulay posets (or even 2-CM posets). \\

\textbf{Acknowledgement:} This paper, which constituted part of the author's Ph.D. thesis, could not have been written without the encouragement, input, and patience of Ed Swartz.  

\bibliography{mybib2}{}

\begin{thebibliography}{10}

\bibitem{bj}
Anders Bj{\"o}rner.
\newblock The homology and shellability of matroids and geometric lattices.
\newblock In {\em Matroid applications}, volume~40 of {\em Encyclopedia Math.
  Appl.}, pages 226--283. Cambridge Univ. Press, Cambridge, 1992.

\bibitem{bw}
Anders Bj{\"o}rner and Michelle Wachs.
\newblock On lexicographically shellable posets.
\newblock {\em Trans. Amer. Math. Soc.}, 277(1):323--341, 1983.

\bibitem{ch}
Manoj~K. Chari.
\newblock Two decompositions in topological combinatorics with applications to
  matroid complexes.
\newblock {\em Trans. Amer. Math. Soc.}, 349(10):3925--3943, 1997.

\bibitem{dk}
Gopal Danaraj and Victor Klee.
\newblock Shellings of spheres and polytopes.
\newblock {\em Duke Math. J.}, 41:443--451, 1974.

\bibitem{de}
Timothy De{V}ries.
\newblock The weak order and flag h-vector inequalities.
\newblock Cornell University senior thesis, 2005.

\bibitem{edandtrisha}
Patricia Hersh and Ed~Swartz.
\newblock Coloring complexes and arrangements.
\newblock {\em J. Algebraic Combin.}, 27(2):205--214, 2008.

\bibitem{hi}
Takayuki Hibi.
\newblock Level rings and algebras with straightening laws.
\newblock {\em J. Algebra}, 117(2):343--362, 1988.

\bibitem{ka}
Kalle Karu.
\newblock The {$cd$}-index of fans and posets.
\newblock {\em Compos. Math.}, 142(3):701--718, 2006.

\bibitem{ns}
Kathryn Nyman and Ed~Swartz.
\newblock Inequalities for the {$h$}-vectors and flag {$h$}-vectors of
  geometric lattices.
\newblock {\em Discrete Comput. Geom.}, 32(4):533--548, 2004.

\bibitem{oxley}
James~G. Oxley.
\newblock {\em Matroid theory}.
\newblock Oxford Science Publications. The Clarendon Press Oxford University
  Press, New York, 1992.

\bibitem{me}
Jay Schweig.
\newblock A convex-ear decomposition for rank-selected subposets of
  supersolvable lattices.
\newblock {\em SIAM J. Discrete Math.}, 23(2):1009--1022, 2009.

\bibitem{st4}
Richard~P. Stanley.
\newblock A survey of {E}ulerian posets.
\newblock In {\em Polytopes: abstract, convex and computational ({S}carborough,
  {ON}, 1993)}, volume 440 of {\em NATO Adv. Sci. Inst. Ser. C Math. Phys.
  Sci.}, pages 301--333. Kluwer Acad. Publ., Dordrecht, 1994.

\bibitem{greenstanley}
Richard~P. Stanley.
\newblock {\em Combinatorics and commutative algebra}, volume~41 of {\em
  Progress in Mathematics}.
\newblock Birkh\"auser Boston Inc., Boston, MA, second edition, 1996.

\bibitem{ec1}
Richard~P. Stanley.
\newblock {\em Enumerative combinatorics. {V}ol. 1}, volume~49 of {\em
  Cambridge Studies in Advanced Mathematics}.
\newblock Cambridge University Press, Cambridge, 1997.
\newblock With a foreword by Gian-Carlo Rota, Corrected reprint of the 1986
  original.

\bibitem{ed}
Ed~Swartz.
\newblock {$g$}-elements, finite buildings and higher {C}ohen-{M}acaulay
  connectivity.
\newblock {\em J. Combin. Theory Ser. A}, 113(7):1305--1320, 2006.

\bibitem{russ}
Russ Woodroofe.
\newblock Cubical convex ear decompositions.
\newblock {\em Electron. J. Combin.}, 16(2, Special volume in honor of Anders
  Bjorner):Research Paper 17, 33, 2009.

\bibitem{ziegler}
G{\"u}nter~M. Ziegler.
\newblock {\em Lectures on polytopes}, volume 152 of {\em Graduate Texts in
  Mathematics}.
\newblock Springer-Verlag, New York, 1995.

\end{thebibliography}
\bibliographystyle{plain}

\end{document}